\documentclass[10pt,a4paper,final]{amsart}

\usepackage{graphicx}%
\usepackage{multirow}%
\usepackage{amsmath,amssymb,amsfonts}%
\usepackage{amsthm}%
\usepackage{mathrsfs}%
\usepackage{stmaryrd}
\usepackage[title]{appendix}%
\usepackage{xcolor}%
\usepackage{textcomp}%
\usepackage{manyfoot}%
\usepackage{booktabs}%
\usepackage{algorithm}%
\usepackage{algorithmicx}%
\usepackage{algpseudocode}%
\usepackage{listings}%
\usepackage{hyperref}



\newtheorem{theorem}{Theorem}
\newtheorem{proposition}[theorem]{Proposition}%
\newtheorem{lemma}[theorem]{Lemma}
\newtheorem{corollary}[theorem]{Corollary}
\newtheorem{example}{Example}%
\newtheorem{remark}{Remark}%

\newtheorem{definition}{Definition}%

\raggedbottom

\begin{document}

\title[Asymptotic Dynamics on Character Varieties over Finite Fields]{Asymptotic Dynamics on Character Varieties over Finite Fields}


\author{Cigole Thomas}
\address{Department of Mathematical Sciences, Colorado State University}
\email{cigole.thomas@colostate.edu}







\begin{abstract}
In this paper, we prove the lack of asymptotic transitivity of the outer automorphism group action of $\mathbb{Z}^r$ on $\mathrm{SL}_n(\mathbb{F}_q)$-character varieties of $\mathbb{Z}^r$ for $n=2,3$ and $r\geq 2$. Along the way, we stratify the character varieties and compute the $E$-polynomial, also known as the Hodge-Deligne polynomial or Serre polynomial, of these character varieties.
\end{abstract}





\maketitle
\section*{Acknowledgements} The author expresses gratitude to Sean Lawton for suggesting the problem, offering guidance, engaging in productive discussions, and providing highly valuable suggestions. The author also extends thanks to Rachel Pries for dedicating time to read the manuscript and provide feedback. Additionally, the author also thanks Jeff Achter for various discussions and suggestions. The author is grateful to George Mason University for funding and resources while carrying out the research.

\section{Introduction}\label{sec1}
Let $\Gamma$ be a finitely presented group and $G=\mathrm{SL}_n(\mathbb{C})$. The $G$\textit{-character variety} of $\Gamma$, $\mathfrak{X}_\Gamma(G)$,  is the space of equivalence classes of group homomorphisms from $\Gamma$ to $G$, where two homomorphisms are equivalent if their conjugation orbit closures intersect (see \ref{CV}).  The outer automorphism group of $\Gamma$ acts on  $\mathfrak{X}_\Gamma(G)$ by precomposition (see \ref{eqn1}). A detailed discussion is provided in the next section.

In this paper, we study the dynamics of this action on the finite field points of the $\mathrm{SL}_n$-character variety of $\mathbb{Z}^r$ for $n=2,3$. The measure-theoretic dynamics of this action over fields of characteristic zero have been explored in specific cases for particular groups and varieties. 
 We aim to investigate whether there is an analogue of these results in the finite field case.  To this end, we introduce the concept of asymptotic transitivity (Definition \ref{AT}), and determine whether the action is asymptotically transitive in certain cases.  We discuss known results on ergodicity that motivate our findings in Section \ref{sec2}. The outline of the paper is as follows: In Section \ref{sec2}, we present some preliminaries and provide historical context for the research. In Section \ref{sec3}, we stratify the $\mathbb{F}_q$-points of the $\mathrm{SL}_3$-character variety of $\mathbb{Z}^r$  based on the stabilizer type under the conjugation action, where $\mathbb{F}_q$ is a finite field of order $q$. Section \ref{sec4} focuses on classifying and counting the number of degree $3$ monic characteristic polynomials with a constant term of $-1$.  Using the results from Section \ref{sec4}, Section \ref{sec5} is dedicated to counting the number of finite field points of the character variety in each stratum. Finally, we define asymptotic transitivity and explore this property in the context of the $\mathrm{Out}(\mathbb{Z}^r)$-action on the $\mathbb{F}_q$-points of the $\mathrm{SL}_n$-character variety of $\mathbb{Z}^r$ for $n=2,3$.
We say that the above action is asymptotically transitive if the ratio of the number of points in an orbit to the total number of points in the character variety approaches one  as $q \rightarrow \infty$ (see Definition \ref{AT}). 
In particular, we show that the action is not asymptotically transitive when $\Gamma=\mathbb{Z}^r$. We state the main theorem of the paper here. 
\begin{theorem}
	The action of $\mathrm{Out}(\mathbb{Z}^r)$ on the $\mathbb{F}_q$-points of the  $\mathrm{SL}_n$-character variety of $\mathbb{Z}^r$ is not asymptotically transitive for $n=2,3$ and $r\geq 2$. Furthermore, the asymptotic ratio of the orbits of elements in the character variety is bounded above by $\frac{1}{2}$. 
\end{theorem}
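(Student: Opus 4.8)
The plan is to exploit the fact that precomposition by an automorphism of $\mathbb{Z}^r$ never changes the image of a representation. Concretely, if $\rho\colon\mathbb{Z}^r\to\mathrm{SL}_n(\mathbb{F}_q)$ and $\phi\in\mathrm{Aut}(\mathbb{Z}^r)=\mathrm{GL}_r(\mathbb{Z})$, then $\rho\circ\phi$ has the same image subgroup $H=\rho(\mathbb{Z}^r)$. Hence the conjugacy class of $H$ — and in particular the minimal maximal torus of $\mathrm{SL}_n$ containing it, equivalently the factorization type over $\mathbb{F}_q$ of the characteristic polynomials of the commuting generators — is an invariant of the $\mathrm{Out}(\mathbb{Z}^r)$-orbit. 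So first I would record that every orbit is contained in a single ``torus-type'' stratum of the (regular) semisimple locus, and that this partition matches the stabilizer-type stratification of Section~\ref{sec3}. The whole problem then reduces to bounding, for each stratum $S$, the asymptotic fraction $|S|/|\mathfrak{X}|$ it occupies, since any orbit $O$ satisfies $|O|\le|S|$ for its ambient stratum.

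Next I would invoke the point counts of Section~\ref{sec5}, built on the classification of Section~\ref{sec4}. The character variety has dimension $(n-1)r$, so its count is a polynomial in $q$ whose leading term $q^{(n-1)r}$ comes entirely from the regular semisimple locus; the unipotent-containing and non-regular strata are of strictly smaller dimension and contribute only lower-order terms. The regular semisimple locus decomposes by maximal-torus type, each type $w$ contributing $\sim|T_w(\mathbb{F}_q)|^r/|W_w|$ points, where $W_w$ is the relative Weyl group. For $n=2$ the two types are the split and non-split tori, of orders $q-1$ and $q+1$, each with $|W_w|=2$, giving leading coefficients $\tfrac12$ and $\tfrac12$. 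For $n=3$ the three types correspond to the cubic factorization patterns of Section~\ref{sec4}: the split torus of order $(q-1)^2$ with $|W_w|=6$ (coefficient $\tfrac16$), the $(2,1)$-type torus with one root in $\mathbb{F}_q$ and a conjugate pair in $\mathbb{F}_{q^2}$, of order $q^2-1$ with $|W_w|=2$ (coefficient $\tfrac12$), and the anisotropic $\mathbb{F}_{q^3}$-torus of order $q^2+q+1$ with $|W_w|=3$ (coefficient $\tfrac13$). These sum to $\tfrac16+\tfrac12+\tfrac13=1$, as they must, and in both $n=2$ and $n=3$ the largest leading coefficient is exactly $\tfrac12$.

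From this the theorem follows. Writing $N(q)$ for the total count and $S(q)$ for the count of the stratum containing a given orbit, both are, up to genuinely lower-order corrections, polynomials in $q$, so $\lim_{q\to\infty}S(q)/N(q)$ equals that stratum's leading coefficient, which is at most $\tfrac12$. Since any orbit lies in one stratum, $\limsup_{q\to\infty}|O|/N(q)\le\tfrac12<1$, which is precisely the assertion that the action is not asymptotically transitive and that the asymptotic orbit ratio is bounded above by $\tfrac12$. The bound is sharp: taking $q$ with $q-1$ prime, the single orbit of tuples surjecting onto the full split torus $\mathbb{Z}/(q-1)$ already realizes a ratio tending to $\tfrac12$.

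The main obstacle is the stratum-by-stratum point count, which is exactly the content of Sections~\ref{sec3}--\ref{sec5}; for $n=3$ the delicate part is the enumeration in Section~\ref{sec4} of monic cubics over $\mathbb{F}_q$ with constant term $-1$ by factorization type, since the determinant-one condition forces the constant term and the factorization type is what pins down the torus. A secondary subtlety is that individual orbit sizes are \emph{not} polynomial in $q$ — they involve Jordan-totient factors $J_r(m)=m^r\prod_{p\mid m}(1-p^{-r})$ that fluctuate with the arithmetic of $q\pm1$ — so the limit of a single orbit ratio need not exist; the argument sidesteps this entirely by bounding each orbit above by its polynomially counted ambient stratum, for which the limit does exist. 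The conceptually essential but technically easy ingredient is the $\mathrm{Out}(\mathbb{Z}^r)$-invariance of the torus type, which is what forbids an orbit from spreading across the two (for $n=2$) or three (for $n=3$) comparable top-dimensional strata.
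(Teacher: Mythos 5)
Your proposal is correct and follows essentially the same route as the paper: the observation that precomposition preserves the image subgroup is the content of the paper's Subgroup Lemma (Lemma \ref{SL}), which likewise confines each orbit to its stabilizer-type stratum, and the stratum counts and leading coefficients you use ($\tfrac16,\tfrac12,\tfrac13$ for $n=3$; $\tfrac12,\tfrac12$ for $n=2$) are exactly those of Theorem \ref{n3}, Proposition \ref{thmSL2}, and Corollaries \ref{corSL2E} and \ref{En3}. The only substantive addition is your (unproven, but plausible) sharpness remark, which the paper does not claim.
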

Another principal result presented in this paper is Theorem \ref{n3}, which provides a count of the number of $\mathbb{F}_q$-points in each stratum of the $\mathrm{SL}_3$-character variety of $\mathbb{Z}^r$. 

The result on asymptotic dynamics has the potential to be generalized in multiple directions, such as to $\mathrm{SL}_n$-character varieties of free groups $F_r$. Our aim is to draw parallels with topological dynamics on character varieties over fields of characteristic zero. The hope is that this can provide insight into the topological properties of  $\mathrm{SL}_n(\mathbb{C})$-character varieties  using arithmetic dynamics. 
\label{def}

\section{Preliminaries and Background}\label{sec2}
\subsection{Motivation and known results}
If $G$ is a complex affine reductive algebraic group and $\Gamma$ is a finitely presented group, then the set of $G$-representations of $\Gamma$ forms an algebraic set $\mathrm{Hom}(\Gamma,G)$. The group $G$ acts on the set of homomorphisms by conjugation. The $G$\textit{-character variety} of $\Gamma$ is the space of equivalence classes of group homomorphisms from $\Gamma$ to $G$, where two homomorphisms are equivalent if their conjugation orbit closures intersect. The $G$-character variety of $\Gamma$ is the categorical quotient in the category of affine varieties denoted by 
\begin{equation} \label{CV}
	\mathfrak{X}_\Gamma(G):=\mathrm{Hom}(\Gamma, G) \sslash G.
\end{equation}
This categorical quotient is constructed using Geometric Invariant Theory (GIT).
When $G$ is an affine algebraic group defined over the integers, $\mathbb{Z}$, 
 the locus of finite field points of the $G$-character variety of $\Gamma$ is well-defined. The automorphism group $\mathrm{Aut}(\Gamma)$ \label{aut} acts naturally on the $G$-representation variety of $\Gamma$ as follows: 
\begin{eqnarray}\label{eqn1}
	\mathrm{Aut}(\Gamma)  & \circlearrowleft  &  \mathrm{Hom}(\Gamma,G) \nonumber \\
	\tau \cdot \rho & = & \rho \circ \tau^{-1}.
\end{eqnarray} This leads to an action of $\mathrm{Out}(\Gamma)$ on the character variety, $\mathfrak{X}_\Gamma(G)$. We explore the dynamics of this action on the finite field points of
$\mathfrak{X}_\Gamma(G)$ in the following sections.
Along the way, we also recover the $E$-polynomials of $\mathrm{SL}_3$-character varieties of $\mathbb{Z}^r$. See Section \ref{Epolynomial} or \cite{HRV08} for an exposition of $E$-polynomials. 
In \cite{LawtonEpoly}, the $E$-polynomials of $\mathfrak{X}_{\mathbb{Z}^r}(\mathrm{SL}_n(\mathbb{C}))$ are calculated for $n=2,3$ using complex geometry methods.  We provide an arithmetic proof for the case when $n=3$, using a method that counts the number of possible characteristic polynomials for matrices over finite fields. An advantage of this method is that it also provides a count of orbits in each stratum for $n=3$.

The measure-theoretic dynamics of this action has been explored in specific cases for particular groups and varieties.  If $G$ is compact and connected, 
Goldman \cite{Goldman} and Pickerell-Xia \cite{PX} showed that when $\Gamma$ is a closed surface group of genus $g\geq 2$, there exists a natural measure class such that the action is ergodic. For complex groups, there have been studies that showed that this action is not ergodic. In \cite{Minsky13}, when $G=\mathrm{SL}_2(\mathbb{C})$ and $\Gamma=F_r$, a free group of rank $r$, Minsky described a domain of discontinuity for the action of $\mathrm{Out}(F_r)$ on $\mathfrak{X}_{F_r}(G)$ implying that the outer automorphism group does not act ergodically on the set of characters with dense image. For a free group, $F_r$, of rank $r\geq 3$, the action of $\mathrm{Out}(F_r)$ on the $G$-character variety of $F_r$ is ergodic with respect to an invariant measure when $G$ is compact and connected \cite{Gelander2008} and  \cite{Goldman2005}. 
When $\Gamma$ is the fundamental group of a non-orientable surface and $G=\mathrm{SU}(2)$, there exists a measure class with respect to which the action is ergodic \cite{Palesi09}. 
In \cite{BL2021}, Burelle and Lawton proved that for a compact connected Lie group  $G$, the  $ \mathrm{Out}(\Gamma)$-action is ergodic on the connected component of the identity of the character variety if $\Gamma$ is nilpotent and $\mathrm{Aut}(\Gamma)$ has a hyperbolic element. 

We are interested in studying the analog of these dynamical systems in an arithmetic setting. 
When $G$ is a reductive group defined over $\mathbb{Z}$ , we can consider the $\mathbb{Z}/p\mathbb{Z}$ points. Then we retain the action as defined above, but there is no longer a natural geometric invariant measure defined on the variety. However, it is still interesting to look at how close the action is to being transitive. 
In this setting, a comparable problem has been explored by Bourgain, Gamburd, and Sarnak in \cite{Sarnak2}, where they studied the $\mathbb{Z}/p\mathbb{Z}$ points of the  Markoff equation given by
\[x_1^2+x_2^2+x_3^2-3x_1x_2x_3=0\]  denoted as $\mathbb{X}(\mathbb{Z}/p\mathbb{Z})$, which is related to the $\mathrm{SL}_2$-character variety of the one-holed torus. 
They were interested in the action of the group $\Gamma$ of affine integral morphisms of an affine $3$-space generated by the permutations of the coordinates and Vieta involutions. Their results yield a strong approximation property for the Markoff equation for most primes, which is comparable to asymptotic transitivity. \\
In \cite{chen2021}, Chen shows that for all but finitely many primes $p$, the group of Markoff automorphisms acts transitively on the nonzero $\mathbb{F}_p$-points of the Markoff equation. This result proves that the action is asymptotically transitive on this variety. 
Goldman's ergodicity theorem for the compact case extends to relative character varieties when the genus, $g\geq 2$. The results from  \cite{chen2021} give reasons to believe that the action on the relative character variety for a one-holed torus, when $G$ is $\mathrm{SL}_2(\mathbb{F}_q)$, is also asymptotically transitive.
Cerbu, Gunther, Magee, and Peilen considered a related problem in ~\cite{Yale}. A similar problem has also been the subject of a project at Mason Experimental Geometry Lab (MEGL) at GMU where some progress has been made in collecting experimental data for small primes \cite{MEGL2015}. 

If we consider the case where the character variety is not relative (i.e., without fixing the boundary values) and where $G$ is a compact Lie group, then for a free group of rank $r\geq 3$, the action is ergodic when $G$ is compact and connected \cite{Gelander2008}. However, if $G$ is not compact, then the free group action on the character variety is not ergodic \cite{Minsky13}. In \cite{GLX2021} and \cite{PX2000}, the action is shown to be ergodic for certain relative character varieties. 

Hausel and Rodriguez-Villegas introduced arithmetic methods, inspired by the Weil conjectures to compute the $E$-polynomials of $G$-character varieties when $G=\mathrm{GL}_n(\mathbb{C}), \mathrm{SL}_n(\mathbb{C})$ and $\mathrm{PGL}_n(\mathbb{C})$. In \cite{HRV08},  they use a theorem of N. Katz [Appendix, \cite{HRV08}] that allows the calculation of $E$-polynomials by counting the finite field points of these varieties, and they obtain the $E$-polynomials for $G=\mathrm{GL}_n(\mathbb{C})$ as a generating function. In \cite{Mereb}, Mereb uses similar methods to calculate the $E$-polynomial when $G= \mathrm{SL}_2(\mathbb{C})$ , as well as a generating function for the $\mathrm{SL}_n(\mathbb{C})$ case. 
The Hodge polynomials of  $\mathrm{SL}_2(\mathbb{C})$ character varieties for curves of small genus were computed in \cite{LVN} by stratifying the space of representations and using fibrations.  The $E$-polynomial of $\mathfrak{X}_{\mathbb{Z}^r}(\mathrm{SL}_2(\mathbb{C}))$ has been calculated in \cite{SCEpoly} using arithmetic methods. In \cite{PLM21}, Gonz\'{a}lez-Perito, Logares, and Mu\~{n}oz compute the $E$-polynomial of the $\mathrm{AGL}_1(\mathbb{C})$-character varieties of closed, oriented surfaces of genus $g \geq 1$, where $\mathrm{AGL}_1(\mathbb{C})$ refers to the group of $\mathbb{C}$-linear affine transformations of the complex line suing three distinct methods for their calculations: the geometric method, the arithmetic method, and the quantum method.  In \cite{PLM23}, they compute the motives of the representation variety of torus knots of type $(m,n)$ into lower-rank affine groups over $\mathbb{C}$.  In \cite{PLM22}, Gonz\'{a}lez-Perito and Mu\~{n}oz extend this work to arbitrary fields, considering $\mathrm{AGL}_r(\mathbf{k})$ for ranks $r = 1$ and $2$.

 In \cite{FNZ},  Florentino, Nozad, and Zamora gave explicit expressions for the $E$-polynomial of the $\mathrm{GL}_n$-character varieties combining the combinatorics of partitions with arithmetic methods.
The authors extend the stratification by polystable type to $\mathrm{SL}_n$-character varieties in \cite{FLORENTINOSerre} and compute the $E$-polynomial of each stratum for the free case.
In \cite{FlorentinoSilva}, Florentino and Silva compute explicit formulas for Hodge-Deligne and E-polynomials of $\mathrm{SL}_n$-character varieties of free abelian groups using combinatorics of partitions. 

\subsection{$E$-polynomials}\label{Epolynomial}
We now define the $E$-polynomials. The following discussion is taken from \cite{SCEpoly} and \cite{LVN}. For an affine variety $X$, we consider the singular cohomology $H^*(X;\mathbb{F})$ where $\mathbb{F}$ is a field of characteristic $0$. See \cite{Deligne1} and \cite{Deligne3} for details.  A \textit{pure Hodge structure of weight $k$} consists of a finite dimensional complex vector space $H$ with a real structure, and a decomposition
$ H = \bigoplus\limits_{k=p+q} H^{p,q}$ such that $H^{q,p} =\overline{H^{p,q}}$, where $\overline{H^{p,q}}$ denotes the complex conjugate on $H$. A Hodge structure of weight $k$ gives rise to a descending\textit{ Hodge filtration},
$ F^p = \bigoplus\limits_{s\geq p} H^{s,k-s}.$

A complex variety $X$ admits a mixed \textit{mixed Hodge structure}, which consists of an increasing weight filtration, 
$$0=W_{-1}\subset W_0 \subset \cdots \subset W_{2j} = H^j(X;\mathbb{Q})$$ and a decreasing Hodge filtration 
$$H^j(X;\mathbb{C}) =F^0 \supset \cdots \supset F^{m+1}=0 \text{ such that for all } 0 \leq p \leq l,$$
$$ \mathrm{Gr}^{W\otimes \mathbb{C}}_l:= W_l\otimes \mathbb{C}/ W_{l-1}\otimes \mathbb{C} =F^p(\mathrm{Gr}^{W\otimes \mathbb{C} }_l)\oplus \overline{F^{l-p+1}(\mathrm{Gr}^{W\otimes \mathbb{C}}_l )}$$ where 
$$ F^p(\mathrm{Gr}^{W\otimes\mathbb{C}}_l) =(F_p\cap W_l \otimes \mathbb{C} +W_{l-1}\otimes \mathbb{C})/W_{l-1}\otimes \mathbb{C}.$$
Then we define the mixed Hodge number for $H^j(X;\mathbb{C})$ as follows: 
\begin{eqnarray*}
	h^{p,q;j}(X) & :=& \dim_{\mathbb{C}}\mathrm{Gr}_p^F(\mathrm{Gr}_{p+q}^{W\otimes \mathbb{C} }H^j(X))\\
	& = & \dim_{\mathbb{C}}F^p \cap (W_{p+q}\otimes \mathbb{C}))/(F^{p+1}\cap W_{p+q}\otimes \mathbb{C} +W_{p+q-1}\otimes \mathbb{C}\cap F^p),
\end{eqnarray*}
using which we define the mixed Hodge polynomial 
$$ H(X;x,y,t):=\sum h^{p,q;j}(X)x^py^qt^j.$$
The same structure can also be obtained by considering cohomology with compact support in a similar fashion. This is denoted by $H_c(X;\mathbb{F}), h_c^{p,q;j}$ (mixed Hodge numbers) and $H_c(X;x,y;t)$(mixed Hodge polynomial). 
Then the  \textit{$E$-polynomial} is defined to be $$E(X; x, y) := H_c(X; x, y,-1).$$
The classical Euler characteristic, $\chi(X)$ can be recovered as $\chi(X)=E(X;1,1)$. For further details, see \cite{CPJM08}. 
The main result used in the computation of E-polynomials is a theorem of Katz \cite[Theorem 6.1.2.3]{HRV08} which states that if the number of points of a variety over every finite field $\mathbb{F}_q$ is a polynomial in $q$, then this polynomial agrees with the $E$-polynomial of the variety. 

We use the setup from \cite{HRV08} to state the theorem. Let $X$ be a variety over $\mathbb{C}$. A \textit{spreading out} of $X$ is a seperated scheme $\mathcal{X}$ over a finitely generated $\mathbb{Z}$ algebra with an embedding $\phi: R \hookrightarrow \mathbb{C}$ such that the extension of scalars satisfy $\mathcal{X}_\phi \cong X$. Then  $X$ is said to have \textit{polynomial count }if there is a polynomial $P_X(t) \in \mathbb{Z}[t]$ and a spreading out $\mathcal{X}$ such that for every homomorphism $\phi:  R \rightarrow \mathbb{F}_q$ to a finite field (for all but finitely many primes $p$ so $q=p^k$), the number of $\mathbb{F}_q$-points of the scheme $\mathcal{X}_\phi$ is
$$\# \mathcal{X}_\phi(\mathbb{F}_q)=P_X(q).$$
Then we have the following theorem. 
\begin{theorem}\cite[Theorem 2.1.8, Katz]{HRV08}\label{ThmKatz}
	Let $X$ be a variety over $\mathbb{C}$. Assume $X$ has polynomial count with count polynomial $P_X(t) \in \mathbb{Z}[t]$, then the $E$-polynomial of $X$ is given by:
	$$ E(X;x,y)=P_X(xy).$$
\end{theorem}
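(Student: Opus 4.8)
The plan is to prove Theorem \ref{ThmKatz} by exhibiting the $E$-polynomial and the counting polynomial as two realizations of a single cohomological object and then matching them. By definition $E(X;x,y)=\sum_{p,q}e_c^{p,q}(X)\,x^py^q$, where $e_c^{p,q}(X)=\sum_j(-1)^j h_c^{p,q;j}(X)$ is the alternating sum of the mixed Hodge numbers of the compactly supported cohomology. First I would fix a spreading out $\mathcal{X}$ over a finitely generated $\mathbb{Z}$-algebra $R$, together with the embedding $\phi\colon R\hookrightarrow\mathbb{C}$ realizing $X$, arranged so that the polynomial-count hypothesis $\#\mathcal{X}_\psi(\mathbb{F}_q)=P_X(q)$ holds for every reduction $\psi\colon R\to\mathbb{F}_q$ outside a finite set of primes. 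Shrinking $\mathrm{Spec}\,R$ if necessary, I may also assume that the compactly supported $\ell$-adic cohomology of the fibers is compatible with the Betti realization of $X$ under the comparison isomorphism.

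The second step brings in the arithmetic side. For a reduction $\mathcal{X}_\psi$ over $\mathbb{F}_q$, the Grothendieck--Lefschetz trace formula gives
$$\#\mathcal{X}_\psi(\mathbb{F}_{q^n})=\sum_j(-1)^j\,\mathrm{Tr}\!\left(\mathrm{Frob}_q^{\,n}\mid H_c^j\!\left(\mathcal{X}_{\psi,\overline{\mathbb{F}}_q},\mathbb{Q}_\ell\right)\right)$$
for all $n\geq 1$ and all $\ell\neq p$. By Deligne's theory of weights, each $H_c^j$ carries a weight filtration whose weight-$w$ graded piece has Frobenius eigenvalues that are Weil numbers of weight $w$, i.e.\ of absolute value $q^{w/2}$. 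The hypothesis forces the left-hand side to equal $P_X(q^n)=\sum_i a_i q^{ni}$ for every $n$. Writing the right-hand side as $\sum_\beta c_\beta\,\beta^{n}$ over the distinct eigenvalues $\beta$, with $c_\beta\in\mathbb{Z}$ the signed multiplicity, and invoking linear independence of the characters $n\mapsto\beta^n$, I would conclude that $c_\beta=a_i$ when $\beta=q^i$ and $c_\beta=0$ otherwise. Equivalently, in the Grothendieck group of $\ell$-adic Galois representations the virtual module $\sum_j(-1)^j[H_c^j]$ equals $\sum_i a_i\,[\mathbb{Q}_\ell(-i)]$; that is, the compactly supported cohomology of $X$ is virtually of Tate type, with $\mathbb{Q}_\ell(-i)$ occurring with net multiplicity $a_i$.

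The final step transports this purity back to the Hodge realization. The comparison theorem — the technical core supplied by Katz's appendix — matches each Tate twist $\mathbb{Q}_\ell(-p)$ (on which geometric Frobenius acts by $q^p$, of weight $2p$) with Hodge type $(p,p)$, and more generally identifies the virtual Hodge and virtual Galois structures attached to the spreading out. Being virtually Tate therefore concentrates the virtual Hodge numbers in bidegrees $(p,p)$: one obtains $e_c^{p,q}(X)=0$ for $p\neq q$ and $e_c^{p,p}(X)=a_p$, the coefficient of $t^p$ in $P_X$. Substituting into the definition of the $E$-polynomial gives $E(X;x,y)=\sum_p a_p\,(xy)^p=P_X(xy)$, as claimed.

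The hardest part, and the step for which I would rely on existing machinery rather than reprove it, is exactly this last comparison: establishing that the arithmetic Frobenius--weight data genuinely computes the Hodge-theoretic $E$-polynomial, so that being virtually of Tate type over $\mathbb{F}_q$ translates into virtual Hodge numbers concentrated in bidegrees $(p,p)$. This demands simultaneous control of the mixed Hodge structure and the Galois action through one spreading out, and rests on Deligne's weight formalism together with $\ell$-independence. By contrast, the eigenvalue bookkeeping in the second step is routine linear algebra once the weight bounds and the trace formula are in place.
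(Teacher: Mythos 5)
The paper does not prove this statement: it is quoted verbatim as Katz's theorem from the appendix to \cite{HRV08}, so there is no internal proof to compare against. Your sketch is a faithful outline of the standard argument given there: spread out over a finitely generated $\mathbb{Z}$-algebra, apply the Grothendieck--Lefschetz trace formula to the reductions, use the polynomial-count hypothesis over all extensions $\mathbb{F}_{q^n}$ (which the paper's definition of polynomial count does supply, since it quantifies over all homomorphisms $R\to\mathbb{F}_q$) together with linear independence of the sequences $n\mapsto\beta^n$ to force the virtual Frobenius module to be of Tate type $\sum_i a_i[\mathbb{Q}_\ell(-i)]$, and then transfer this to the statement $e_c^{p,q}=0$ for $p\neq q$, $e_c^{p,p}=a_p$ via the comparison of the arithmetic weight filtration with Deligne's mixed Hodge structure. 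You correctly isolate the genuinely hard step --- the compatibility of the $\ell$-adic weights of the special fibers with the Hodge-theoretic weights of the complex fiber through a single spreading out --- as the piece that must be imported rather than reproved; that compatibility is precisely the content of Katz's appendix, so as a self-contained proof your write-up is a sketch, but as a reconstruction of the cited argument it is accurate and contains no wrong turns.
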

This tells us that when we count the number of solutions to equations defining the variety over $\mathbb{F}_q$, if it is a universal polynomial evaluated at $q$ for all but finitely many $q$, then this polynomial determines the $E$-polynomial of the variety. 
In the first half of the paper, we focus on counting the finite field points of the character varieties. 
\subsection{Points in character variety as diagonal tuples}  
Let  $G=\mathrm{SL}_n(\mathbb{F}_q$) where $\mathbb{F}_q$ is a finite field with $q=p^k$ elements for a prime, $p$. 
\begin{lemma}
	The set of homomorphisms, $\mathrm{Hom}(\mathbb{Z}^r,G)$ is in bijective correspondence with the set of pairwise commuting $r$ tuples of $G$, $$ \{(A_1,\ldots,A_r) \in G^r\ | \ A_iA_j=A_jA_i \ \text{ for } 1\leq i,j \leq r\}.$$
\end{lemma}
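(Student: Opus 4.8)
The plan is to exhibit an explicit bijection between $\mathrm{Hom}(\mathbb{Z}^r,G)$ and the set of pairwise commuting $r$-tuples, and then verify it is well-defined in both directions. First I would recall that $\mathbb{Z}^r$ is the free abelian group on $r$ generators, say $e_1,\dots,e_r$, so that a group homomorphism $\rho\colon \mathbb{Z}^r \to G$ is determined by its values on these generators. This suggests the evaluation map
\begin{equation*}
\Phi\colon \mathrm{Hom}(\mathbb{Z}^r,G) \longrightarrow G^r, \qquad \Phi(\rho) = (\rho(e_1),\dots,\rho(e_r)),
\end{equation*}
and the main task is to show that $\Phi$ lands in the commuting locus and is a bijection onto it.

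Next I would check that $\Phi$ actually maps into the set of pairwise commuting tuples. Since $\mathbb{Z}^r$ is abelian, $e_ie_j = e_je_i$ for all $i,j$, and because $\rho$ is a homomorphism we get $\rho(e_i)\rho(e_j) = \rho(e_ie_j) = \rho(e_je_i) = \rho(e_j)\rho(e_i)$. Hence $A_i := \rho(e_i)$ satisfy $A_iA_j = A_jA_i$, so $\Phi(\rho)$ lies in the commuting locus as claimed.

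For surjectivity and injectivity, I would use the universal property of the free abelian group. Injectivity is immediate: the generators $e_1,\dots,e_r$ generate $\mathbb{Z}^r$, so if $\rho(e_i)=\rho'(e_i)$ for all $i$ then $\rho$ and $\rho'$ agree on a generating set and hence are equal. For surjectivity, given a commuting tuple $(A_1,\dots,A_r)\in G^r$, I would define a map on $\mathbb{Z}^r$ by $\rho(m_1,\dots,m_r) := A_1^{m_1}\cdots A_r^{m_r}$ and verify it is a homomorphism. This is where the commutativity hypothesis is essential: checking $\rho(\mathbf{m}+\mathbf{m}') = \rho(\mathbf{m})\rho(\mathbf{m}')$ requires reordering factors such as $A_i^{m_i}A_j^{m_j'}$, and the equalities $A_iA_j=A_jA_i$ are exactly what permit this rearrangement so that the exponents on each $A_i$ add correctly.

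The main obstacle, such as it is, is the bookkeeping in this last verification: one must confirm that the well-definedness and the homomorphism property of $\rho$ follow cleanly from pairwise commutativity, and that $\Phi(\rho) = (A_1,\dots,A_r)$ so that the constructed $\rho$ is a genuine preimage. I expect no real difficulty here, only care in the reordering argument; since $\mathbb{Z}^r$ is precisely the coproduct of $r$ copies of $\mathbb{Z}$ in the category of abelian groups, the existence and uniqueness of $\rho$ extending any prescribed commuting generator images is exactly the universal property, and invoking it gives both injectivity and surjectivity at once, completing the bijection.
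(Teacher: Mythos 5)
Your proposal is correct and follows the same route as the paper: the evaluation map on the standard generators, with injectivity from the generators generating $\mathbb{Z}^r$ and surjectivity from extending a commuting tuple via the universal property of the free abelian group. The paper's proof is just a terser version of exactly this argument.
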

\begin{proof}
	Recall that $\mathbb{Z}^r=\{(\gamma_1,\ldots, \gamma_r) \ | \gamma_i\gamma_j\gamma_i^{-1}\gamma_j^{-1}\text{ for } 1\leq i,j \leq r\}$. The evaluation map $ev: \mathrm{Hom}(\mathbb{Z}^r,\mathrm{SL}_n(\mathbb{F}_q)) \rightarrow G^r$ defined by 
	$ ev(\rho)=(\rho(\gamma_1),\ldots,\rho(\gamma_r))$ is surjective on the set of pairwise commuting tuples in $G$. It is clearly an injection and thus, $ev$ gives the desired bijective correspondence. 
\end{proof}
\noindent The conjugation action of $\mathrm{SL}_n(\mathbb{F}_q)$ on $\mathrm{Hom}(\mathbb{Z}^r,\mathrm{SL}_n(\mathbb{F}_q))$ is defined as $$ A\cdot\rho = A\rho A^{-1}=(A\rho(\gamma_1) A^{-1},\ldots,A\rho(\gamma_r) A^{-1}).$$
Let $\mathrm{Hom}(\mathbb{Z}^r, \mathrm{SL}_n(\mathbb{F}_q))/ \mathrm{SL}_n(\mathbb{F}_q)$ denote the set of orbits under this action. We first identify the elements in the set $\mathrm{Hom}(\mathbb{Z}^r, \mathrm{SL}_n(\mathbb{F}_q))$ that correspond to closed conjugation orbits which parameterizes the character variety, $\mathfrak{X}_{\mathbb{Z}^r}(\mathrm{SL}_n(\mathbb{F}_q))$. 
In \cite{FL2014}, Florentino and Lawton proved that there is a homeomorphism between the space of polystable orbits (closed orbits) and the GIT quotient. 
By Proposition 3.1 in \cite{FL2014}, for a finitely generated abelian group $\Gamma$ and a complex reductive algebraic group $G$, the set of polystable points, $\mathrm{Hom}(\Gamma,G)^{ps}$ is equivalent to $$\mathrm{Hom}(\Gamma,G_{ss})=\{ \rho \in \mathrm{Hom}(\Gamma,G) \ | \ \rho(\gamma_i)\in G_{ss}, i=1,\ldots,r\}$$  where $G_{ss}$ denotes the semisimple points in $G$. When $\Gamma =\mathbb{Z}^r$, the polystable points are homomorphisms, $\rho$ such that $\rho(\gamma_i)$ are simultaneously diagonalizable for all $i$, $1\leq i \leq r$. 

Through the evaluation map, we identify each $[\rho]\in \mathrm{Hom}(\mathbb{Z}^r, \mathrm{SL}_n(\mathbb{F}_q))\sslash \mathrm{SL}_n(\mathbb{F}_q)$  that corresponds to a pairwise commuting $r$-tuple of diagonal matrices in $\mathrm{SL}_n(\mathbb{F}_q)$.
To count the number of points in the character variety $\mathfrak{X}_{\mathbb{Z}^r}(\mathrm{SL}_n(\mathbb{F}_q))$, it suffices to count the number of distinct $r$-tuples of pairwise commuting diagonal matrices in $\mathrm{SL}_n(\mathbb{F}_q)$ up to the Weyl group action. It is easy to count the total number of diagonal tuples but the Weyl group action is different based on the stabilizer of the tuple under the conjugation action. So, we first stratify the space based on the stabilizer type of matrices under conjugation. 

\section{Stratification}\label{sec3}
We first compute the stabilizer of a tuple under the conjugation action. Let $\mathbb{F}_q$ denotes the finite field with $q=p^k$ elements where $p$ is prime and $\overline{\mathbb{F}}_q$ denote its algebraic closure. Throughout this paper, all polynomials that we mention have coefficients over the base field $\mathbb{F}_q$ unless otherwise specified. As we aim to count the number of tuples of matrices that are simultaneously diagonalizable over $\overline{\mathbb{F}}_q$, we consider the conjugation action over $\overline{\mathbb{F}}_q$ instead of $\mathbb{F}_q$. 

Consider the conjugation action of $\mathrm{GL}_n(\overline{\mathbb{F}}_q)$ on $\mathrm{SL}_n(\overline{\mathbb{F}}_q)^r$. For $P\in \mathrm{GL}_n(\overline{\mathbb{F}}_q)$ and $(A_1,\ldots,A_r) \in \mathrm{SL}_n(\overline{\mathbb{F}}_q)$, 
$$ P \cdot (A_1,\ldots,A_r) = (PA_1P^{-1},\ldots,PA_rP^{-1})$$ 
is in $\mathrm{SL}_n(\overline{\mathbb{F}}_q)^r$. 
Note that if a tuple,  $(A_1,\ldots,A_r)\in \mathrm{SL}_n(\mathbb{F}_q)^r$ is diagonalizable over $\mathrm{GL}_n(\overline{\mathbb{F}}_q)$, then the tuple is diagonalizable over $\mathrm{SL}_n(\overline{\mathbb{F}}_q)$ as well. 
To see this, suppose there exists $P\in \mathrm{GL}_n(\overline{\mathbb{F}}_q)$ such that the tuple $(PA_1P^{-1},\ldots, PA_rP^{-1})$ is diagonal. By letting $Q=\frac{1}{\sqrt[n]{\det P}} \cdot P$, we have $\det(Q)=1$ and $(QA_1Q^{-1},\ldots,QA_rQ^{-1})$ is diagonal. Therefore, to count simultaneously upper triangulizable matrices, it suffices to consider the action of  $\mathrm{SL}_n(\overline{\mathbb{F}}_q)$ instead of $\mathrm{GL}_n(\overline{\mathbb{F}}_q)$.

Note that pairwise commuting tuples in $\mathrm{SL}_n(\mathbb{F}_q)$ are simutaneously upper triangularizable over  $\overline{\mathbb{F}}_q$. But only the simultaneously diagonalizable tuples have closed orbits thereby consituting points in the character variety. Therefore there is a bijective correspondence between the closed conjugation orbits in $\mathrm{Hom}(\mathbb{Z}^r,\mathrm{SL}_2(\mathbb{F}_q))$ and the set of diagonal tuples in $\mathrm{SL}_2(\overline{\mathbb{F}}_q)^r$ up to the Weyl group action.   

\begin{definition}
	Two tuples $(A_1,\ldots, A_r)$ and $(B_1, \ldots ,B_r)$ in $ \mathrm{SL}_n(\mathbb{F}_q)^r$ are said to have the same stabilizer type if  $|\mathrm{Stab}(A_1,\ldots A_r)|=|\mathrm{Stab}(B_1, \ldots B_r)|$. 
\end{definition} 
\begin{definition}\label{G_A}
	For $A\in \mathrm{SL}_n(\overline{\mathbb{F}}_q)$, define the centralizer of $A$ in  $\mathrm{SL}_n(\mathbb{F}_q)$ as ,
	$$ G_A :=\{ P \in \mathrm{GL}_n(\overline{\mathbb{F}}_q)  \ | \ PA=AP\}.$$
\end{definition}
\noindent
Note that $G_A$ is the stabilizer of the conjugacy action and we are often concerned with the stabilizer in $\mathrm{SL}_n(\mathbb{F}_q)$ i.e., $G_A \cap \mathrm{SL}_n(\mathbb{F}_q)$. 
The following lemma characterizes the stabilizer when $r=1$.
\begin{lemma}\label{lem3.2.2}
	Let $D =[d_{ii}]_{i=1}^n \in \mathrm{GL}_n(\mathbb{F})$ be a diagonal matrix with diagonal entries, $d_{ii}$ from a field, $\mathbb{F}$. Let $G_D \subseteq \mathrm{GL}(\mathbb{F})$ be the set of all matrices that commute with $D$. Then $$G_D= \left\{ [a_{ij} ]_{i,j=1}^n\ | \ a_{ij}=0 \text{ if } d_{ii} \neq d_{jj}  \text{ and } \ a_{ij}\in \mathbb{F} \text{ if } d_{ii} = d_{jj}  \right\}\cap \mathrm{GL}_n(\mathbb{F}).$$
\end{lemma}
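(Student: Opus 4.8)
The plan is to verify the commutation condition entrywise and simply read off the resulting constraints on the entries of a commuting matrix. Let $A=[a_{ij}]_{i,j=1}^n \in \mathrm{GL}_n(\mathbb{F})$ be arbitrary. The key observation is that, because $D$ is diagonal, the two products $AD$ and $DA$ have transparent closed forms: right multiplication by $D$ scales column $j$ by the scalar $d_{jj}$, while left multiplication by $D$ scales row $i$ by $d_{ii}$. Concretely, $(AD)_{ij}=a_{ij}d_{jj}$ and $(DA)_{ij}=d_{ii}a_{ij}$, which one checks directly from the definition of matrix multiplication together with $d_{k\ell}=0$ for $k\neq \ell$.

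Next I would equate these expressions. The identity $AD=DA$ holds if and only if $a_{ij}d_{jj}=d_{ii}a_{ij}$ for every pair $(i,j)$, equivalently $a_{ij}(d_{jj}-d_{ii})=0$ for all $i,j$. Since $\mathbb{F}$ is a field, hence an integral domain, this product vanishes only when one of its factors is zero: it forces $a_{ij}=0$ whenever $d_{ii}\neq d_{jj}$, and places no restriction on $a_{ij}$ when $d_{ii}=d_{jj}$. This is precisely the description of the set appearing in the statement, and intersecting with $\mathrm{GL}_n(\mathbb{F})$ then selects exactly the invertible members, yielding $G_D$.

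I would also record the structural reformulation, since that is what makes the lemma useful in the stratification to follow: after reordering the standard basis so that indices sharing a common diagonal value are grouped consecutively, the vanishing condition says that $A$ is block diagonal, with one block for each distinct value among the $d_{ii}$ and with arbitrary entries inside each block. Hence $G_D$ is conjugate to the product $\prod_{\lambda}\mathrm{GL}_{m_\lambda}(\mathbb{F})$, where $m_\lambda$ denotes the multiplicity of the eigenvalue $\lambda$ on the diagonal, and invertibility of $A$ is equivalent to invertibility of each individual block.

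Since the whole argument reduces to a single entrywise computation, there is no serious obstacle to overcome; the only place the hypothesis on $\mathbb{F}$ is genuinely used is the cancellation step $a_{ij}(d_{jj}-d_{ii})=0\Rightarrow a_{ij}=0$, which relies on the absence of zero divisors. I would therefore present the proof as the short direct verification above rather than invoking any external machinery.
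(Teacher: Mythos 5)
Your proof is correct and follows essentially the same route as the paper's: compare the $ij$-entries of $AD$ and $DA$ to obtain $a_{ij}(d_{jj}-d_{ii})=0$, then use that $\mathbb{F}$ has no zero divisors to conclude $a_{ij}=0$ exactly when $d_{ii}\neq d_{jj}$. The block-diagonal reformulation you append is a harmless (and useful) addition not present in the paper's proof, but the core argument is identical.
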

\begin{proof}
	Let $D$ be diagonal and $A\in \mathrm{GL}_n(\mathbb{F})$ be such that $AD=DA$. 
	Compare the $ij$-th entry of $AD$ and $DA$ for $1\leq i,j\leq n$  such that $d_{ii} \neq d_{jj}$ to get 
	\begin{eqnarray*}
		[AD]_{ij} &=& \sum_{k=1}^n a_{ik}d_{kj}  = a_{ij}d_{jj} \text{ and }
		\relax[DA]_{ij}  =   \sum_{k=1}^n d_{ik}a_{kj}  = d_{ii}a_{ij}
	\end{eqnarray*}
	Note that  $AD=DA$ if and only if  $a_{ij}(d_{jj}-d_{ii}) =0$ which is true  only if $a_{ij}=0$ or if $d_{ii}=d_{jj}$.  If $d_{ii}=d_{jj}$, then $ a_{ij}(d_{jj}-d_{ii}) =0$ for any value of $a_{ij} \in \mathbb{F}$.
\end{proof}

\begin{remark}\label{cor322}
	Suppose $D\in \mathrm{SL}_n(\overline{\mathbb{F}}_q)$ is a  diagonal matrix such that all the diagonal entries are distinct. If $A \in \mathrm{SL}_n(\overline{\mathbb{F}}_q)$ such that $AD=DA$, then $A$ is diagonal.
\end{remark}
We can now compute the stabilizer of a tuple $(A_1,\ldots,A_r) \in \mathrm{SL}_n(\mathbb{F}_q)$ in terms of the centralizers of $A_i$. 
\begin{lemma}\label{lem4.2.10}
	Let $(A_1,\ldots,A_r) \in \mathrm{SL}_n(\overline{\mathbb{F}}_q)^{ r} $. Consider the conjugation action of $\mathrm{GL}_n(\overline{\mathbb{F}}_q)$ defined by $P \cdot (A_1,....,A_r) = (PA_1P^{-1},....,PA_rP^{-1})$. \\Then   $\mathrm{Stab}_{\mathrm{GL}_n(\overline{\mathbb{F}}_q)}((A_1,....,A_r))=G_{A_1} \cap G_{A_2} \cap \ldots \cap G_{A_r}.$
\end{lemma}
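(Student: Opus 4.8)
The plan is to prove the set equality by unwinding the definition of the stabilizer and reducing the single condition on the tuple to a conjunction of conditions on each coordinate. First I would observe that, directly from the definition of the conjugation action, a matrix $P \in \mathrm{GL}_n(\overline{\mathbb{F}}_q)$ belongs to $\mathrm{Stab}_{\mathrm{GL}_n(\overline{\mathbb{F}}_q)}((A_1,\ldots,A_r))$ precisely when
$$P \cdot (A_1,\ldots,A_r) = (PA_1P^{-1},\ldots,PA_rP^{-1}) = (A_1,\ldots,A_r).$$

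Next I would use that equality of ordered tuples is equality coordinatewise, so this single vector equation is equivalent to the system $PA_iP^{-1} = A_i$ holding for every $i \in \{1,\ldots,r\}$. Since $P$ is invertible, each such equation can be rewritten (by multiplying on the right by $P$) as $PA_i = A_iP$, which by Definition \ref{G_A} is exactly the assertion that $P \in G_{A_i}$. Chaining these equivalences, $P$ stabilizes the tuple if and only if $P \in G_{A_i}$ for all $i$ simultaneously, i.e.\ $P \in \bigcap_{i=1}^r G_{A_i}$.

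Finally, I would record that because every step above is a genuine equivalence, both inclusions hold at once, yielding the claimed equality of sets rather than a one-sided containment. The ``hard part'' here is really only bookkeeping: one must be careful that the passage from $PA_iP^{-1} = A_i$ to $PA_i = A_iP$ uses invertibility of $P$ (automatic, as $P \in \mathrm{GL}_n$), and that each implication is reversible. There is no substantive obstacle; the statement is a direct dictionary translation between the language of the conjugation action and the language of centralizers, and the proof is a short chain of \emph{if and only if} statements.
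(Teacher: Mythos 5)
Your proof is correct and follows essentially the same route as the paper, which simply unwinds the definition of the stabilizer and identifies the coordinatewise conditions $PA_iP^{-1}=A_i$ with membership in each $G_{A_i}$. Your version just makes the intermediate equivalences explicit.
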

\begin{proof} Observe that
		\begin{eqnarray*}	\mathrm{Stab}_{\mathrm{GL}_n(\overline{\mathbb{F}}_q)}((A_1,..,A_r)) &=& \{P \in \mathrm{GL}_n(\overline{\mathbb{F}}_q)\ | \ P \cdot(A_1,..,A_r)=(A_1,..,A_r)\} \\
	& = & G_{A_1} \cap G_{A_2} \cap \cdots \cap G_{A_r}.
	\end{eqnarray*}
\end{proof}

Given a matrix group, $G$, let $\mathscr{D}(G^r)$ denote the set of all diagonal matrices in $G^r$. For every matrix $P\in \mathrm{GL}_n(\overline{\mathbb{F}}_q)$ that stabilizes a tuple $(A_1,\ldots,A_r)\in \mathrm{SL}_n(\mathbb{F}_q)^r$, there exists a matrix in $\mathrm{SL}_n(\overline{\mathbb{F}}_q)$ that is in $\mathrm{Stab}({(A_1,\ldots,A_r))}$. To see this, suppose there exists $P\in \mathrm{GL}_n(\overline{\mathbb{F}}_q)$ such that the tuple $(PA_1P^{-1},\ldots, PA_rP^{-1})$ is diagonal. By letting $Q=\frac{1}{\sqrt[n]{\det P}} \cdot P$, we have $\det(Q)=1$ and $(QA_1Q^{-1},\ldots,QA_rQ^{-1})$ is diagonal. 

We now consider the case when $n=3$. In particular, we look at the stabilizer subgroup in $\mathrm{SL}_3(\mathbb{F}_q)$ i.e, $\mathrm{Stab}{((A_1,\ldots,A_r)})_{\mathrm{SL}_3(\mathbb{F}_q)} = \mathrm{Stab}{((A_1,\ldots,A_r)}) \cap \mathrm{SL}_3(\mathbb{F}_q)$.  
\begin{lemma}
	Let $(A_1,\ldots, A_r) $ be a tuple in $ \mathrm{SL}_3(\mathbb{F}_q)^r$ that is simultaneously diagonalizable to $(D_1,\ldots, D_r) \in \mathscr{D}(\mathrm{SL}_3(\mathbb{F}_q)^r)$, the set of all $r$-tuples of diagonal matrices in $\mathrm{SL}_3(\mathbb{F}_q)^r$. Let $d_{i_k}$ denote the $k^{th}$ diagonal entry of the matrix $D_i$. Then the three different stabilizer types under the simultaneous conjugation action are as follows: 
	\begin{enumerate}
		\item[1.] If $D_i$ is scalar for all $1\leq i \leq r$, then $\mathrm{Stab}((A_1,\ldots, A_r) )_{\mathrm{SL}_3(\mathbb{F}_q)}=\mathrm{SL}_3(\mathbb{F}_q)$.
		\item[2.] If there exists at least one $D_i$ such that all the entries of $D_i$ are distinct, then, $|\mathrm{Stab}((A_1,\ldots, A_r) )_{\mathrm{SL}_3(\mathbb{F}_q)}|= |\mathscr{D}(\mathrm{SL}_3(\mathbb{F}_q))|$.
		\item[3.]  If there exists exactly one pair $s,t\in \{1,2,3\}$ such that $d_{i_s}=d_{i_t}$ for all $1\leq i \leq r$, that is, same two rows have repeated entries for all $D_i$, then 
		\begin{eqnarray*}|\mathrm{Stab}((A_1,\ldots, A_r))_{\mathrm{SL}_3(\mathbb{F}_q)}| &= & |\{B=[b_{xy}] \ | \ b_{xx},b_{i_si_t},b_{i_ti_s} \in \mathbb{F}_q \\
			& &\text{ and } b_ {xy} =0 \text { for all other entries} \} \cap \mathrm{SL}_3(\mathbb{F}_q)|.
		\end{eqnarray*}
		\item[4.]	If there exist distinct $i,j$ such that $D_i$ and $D_j$ have exactly two repeated entries in different rows, i.e., $d_{i_s}=d_{i_t}$ and $d_{j_x}=d_{j_y}$ but $\{i_s,i_t\} \neq \{j_x,j_y\}$, then $|\mathrm{Stab}((A_1,\ldots, A_r) )_{\mathrm{SL}_3(\mathbb{F}_q)}|= |\mathscr{D}(\mathrm{SL}_3(\mathbb{F}_q))|$. 
	\end{enumerate} 
\end{lemma}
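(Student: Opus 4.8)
The plan is to reduce everything to the diagonal representative $(D_1,\ldots,D_r)$ and then read off the stabilizer from Lemma \ref{lem4.2.10} together with the explicit description of single-matrix centralizers in Lemma \ref{lem3.2.2}. First I would note that since each $D_i$ lies in $\mathscr{D}(\mathrm{SL}_3(\mathbb{F}_q)^r)$, all eigenvalues of the $A_i$ already lie in $\mathbb{F}_q$; hence a common eigenbasis can be chosen over $\mathbb{F}_q$, so there is a single $P \in \mathrm{GL}_3(\mathbb{F}_q)$ with $D_i = P A_i P^{-1}$ for all $i$. Conjugation by such a $P$ is a determinant-preserving automorphism of $\mathrm{GL}_3(\mathbb{F}_q)$ that restricts to a bijection of $\mathrm{SL}_3(\mathbb{F}_q)$ onto itself; consequently $\mathrm{Stab}((A_1,\ldots,A_r))_{\mathrm{SL}_3(\mathbb{F}_q)}$ and $\mathrm{Stab}((D_1,\ldots,D_r))_{\mathrm{SL}_3(\mathbb{F}_q)}$ are conjugate and in particular have the same cardinality. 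This justifies computing the stabilizer for the diagonal tuple.

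Next I would apply Lemma \ref{lem4.2.10} to write $\mathrm{Stab}((D_1,\ldots,D_r)) = G_{D_1} \cap \cdots \cap G_{D_r}$, and then insert the description of each $G_{D_i}$ from Lemma \ref{lem3.2.2}: a matrix $[a_{jk}]$ lies in $G_{D_i}$ exactly when $a_{jk}=0$ whenever $d_{i_j} \neq d_{i_k}$. Intersecting over all $i$, the common centralizer consists of those $[a_{jk}] \in \mathrm{GL}_3$ for which $a_{jk}=0$ unless $d_{i_j}=d_{i_k}$ holds simultaneously for every $i$. I would encode this by the equivalence relation on $\{1,2,3\}$ given by $j \sim k \iff d_{i_j}=d_{i_k}$ for all $i$: the off-diagonal entry in position $(j,k)$ is allowed to be nonzero precisely when $j \sim k$. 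The four cases of the lemma are then exactly the four possible shapes of this partition, and the stabilizer in $\mathrm{SL}_3(\mathbb{F}_q)$ is obtained by intersecting the resulting set of matrices with $\mathrm{SL}_3(\mathbb{F}_q)$.

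I would then dispatch the cases. If all $D_i$ are scalar, then $\sim$ has the single class $\{1,2,3\}$, every position is free, the common centralizer is all of $\mathrm{GL}_3$, and the stabilizer is $\mathrm{SL}_3(\mathbb{F}_q)$ (equivalently, the scalar $A_i$ are central). If some $D_i$ has three distinct diagonal entries, then no pair is $\sim$-related, so only diagonal matrices survive and the stabilizer is $\mathscr{D}(\mathrm{SL}_3(\mathbb{F}_q))$, which handles Case 2. In Case 3 the unique coincident pair $\{s,t\}$ gives the partition $\{s,t\},\{u\}$, so the surviving off-diagonal positions are exactly $(s,t)$ and $(t,s)$, yielding the matrices described. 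In Case 4, because the two coincidences occur in different rows, each of the three index-pairs fails to be coincident for at least one $D_\ell$ (the pair $\{s,t\}$ fails for $D_j$, the pair $\{x,y\}$ fails for $D_i$, and the remaining pair fails already for $D_i$), so $\sim$ is trivial and the stabilizer is again $\mathscr{D}(\mathrm{SL}_3(\mathbb{F}_q))$.

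The main obstacle I anticipate is the first step: making the passage from $(A_1,\ldots,A_r)$ to its diagonal form compatible with the $\mathrm{SL}_3(\mathbb{F}_q)$-structure. The earlier normalization in the paper diagonalizes by an element of $\mathrm{SL}_3(\overline{\mathbb{F}}_q)$, and conjugation by a non-rational matrix need not preserve $\mathrm{SL}_3(\mathbb{F}_q)$ or the cardinality of its intersection with the stabilizer. The resolution rests on the hypothesis $D_i \in \mathscr{D}(\mathrm{SL}_3(\mathbb{F}_q)^r)$, i.e.\ that all eigenvalues are already in $\mathbb{F}_q$, which forces the diagonalizing matrix to be taken in $\mathrm{GL}_3(\mathbb{F}_q)$; everything after that is a direct, if slightly tedious, verification of the zero-patterns dictated by $\sim$.
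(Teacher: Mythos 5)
Your proposal is correct and follows essentially the same route as the paper's proof: both reduce to the diagonal representative, write the stabilizer as $G_{D_1}\cap\cdots\cap G_{D_r}$ via Lemma \ref{lem4.2.10}, and read off the allowed zero-patterns from Lemma \ref{lem3.2.2} in each of the four cases (your equivalence-relation packaging of the case analysis is only a cosmetic difference). Your explicit justification that the diagonalizing matrix can be taken in $\mathrm{GL}_3(\mathbb{F}_q)$ --- so that conjugation genuinely preserves $\mathrm{SL}_3(\mathbb{F}_q)$ and hence the stabilizer cardinality --- is in fact more careful than the paper, which simply asserts that conjugate tuples have conjugate stabilizers.
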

\begin{proof}
	
	\begin{enumerate}
		\item[1.] Since the central elements, $D_i$ commute with all matrices in $\mathrm{SL}_3(\mathbb{F}_q)$ for all $1\leq i \leq r$, the result follows from Lemma \ref{lem3.2.2} and Lemma \ref{lem4.2.10}. 
		\item[2.] Suppose there exists $i$ such that $D_i$ has distinct diagonal entries. Then, by Remark \ref{cor322}, $G_{D_i} \cap \mathrm{SL}_3(\mathbb{F}_q)= \mathscr{D}(\mathrm{SL}_3(\mathbb{F}_q))$. For $j\neq i$, $G_{D_i} \cap G_{D_j} \cap \mathrm{SL}_3(\mathbb{F}_q)=\mathscr{D}(\mathrm{SL}_3(\mathbb{F}_q))$. 
		By Lemma \ref{lem4.2.10}, $\mathrm{Stab}((D_1,\ldots,D_r))_{\mathrm{SL}_3(\mathbb{F}_q)} = G_{D_1}\cap \ldots \cap G_{D_r}\cap{\mathrm{SL}_3(\mathbb{F}_q)} =\mathscr{D}(\mathrm{SL}_3(\mathbb{F}_q))$. 
		Finally, since $(A_1,\ldots,A_r)$ and $(D_1,\ldots,D_r)$ are in the same orbit under conjugation, the stabilizer subgroups are conjugate. Therefore,  $|\mathrm{Stab}((A_1,\ldots,A_r))_{\mathrm{SL}_3(\mathbb{F}_q)}| =|\mathrm{Stab}((D_1,\ldots,D_r))_{\mathrm{SL}_3(\mathbb{F}_q)}|$.
		\item[3.] Let $A_i$ be a non-central matrix with exactly two repeated eigenvalues. 
		Then $A_i$ is diagonalizable to $D_i$ with exactly two repeated diagonal entries, say $d_{i_s}=d_{i_t}$ for all $i$, where $1\leq i \leq r$. By Lemma \ref{lem3.2.2}, $BD_i=D_iB$ implies $B= [b_{xy}] $ where  $b_{xx},b_{i_ti_s},b_{i_si_t} \in \mathbb{F}_q$   and $  b_{xy} =0 $ otherwise.
		
		Let $A_j$ be such that $1\leq j \leq r$ and $j\neq i$, then there are two possibilities for $A_j$. In the first case, $A_j$ has exactly two repeated eigenvalues and will be diagonalizable to $D_j$ with exactly two repeated entries, $d_{j_s}=d_{j_t}$. Then $G_{D_j}=G_{D_i}$ by the same explanation above. This implies $G_{D_i}\cap G_{D_j}=G_{D_i}$.
		The second possibility is that $D_j$ is scalar. Then $G_{D_j}\cap \mathrm{SL}_3(\mathbb{F}_q)=\mathrm{SL}_3(\mathbb{F}_q)$. Consequently, $G_{D_i} \cap G_{D_j}\cap \mathrm{SL}_3(\mathbb{F}_q)=G_{D_i}\cap \mathrm{SL}_3(\mathbb{F}_q)$. Thus in both cases, 
		\begin{eqnarray*} 
				\mathrm{Stab}((D_1,\ldots, D_r))_{\mathrm{SL}_3(\mathbb{F}_q)}&=&G_{D_1}\cap \cdots \cap G_{D_r}\cap \mathrm{SL}_3(\mathbb{F}_q)  =  G_{D_i}\cap \mathrm{SL}_3(\mathbb{F}_q)\\
					&= &\{[b_{xy}]  |  b_{xx},b_{i_ti_s},b_{i_si_t} \in \mathbb{F}_q \text{ and }  b_{xy} =0 \text{ otherwise}\} \cap \mathrm{SL}_3(\mathbb{F}_q).
		\end{eqnarray*}
		Since $|\mathrm{Stab}((D_1,\ldots, D_r))_{\mathrm{SL}_3(\mathbb{F}_q)}| = 	|\mathrm{Stab}((A_1,\ldots, A_r))_{\mathrm{SL}_3(\mathbb{F}_q)} |$, the result follows. 
		\item[4. ] Suppose $A_i$ has exactly one eigenvalue of mutliplicity two. Without loss of generality, we can simultaneously permute the rows of $D_i$ so that  $D_i$ has repeated entries, $d_{11}=d_{22}$. Let $A_j$ be such that $D_j$ has repeated entries, $d_{22}=d_{33}$ (The case when $d_{11}=d_{33}$ can be proved in the same way). As in the proof of part 3, by Lemma \ref{lem3.2.2}, we have the following
		\begin{eqnarray*}
			\hspace{1cm}G_{D_i} \cap \mathrm{SL}_3(\mathbb{F}_q)&=& \left\{\left. \begin{pmatrix}
				a_{11} & a_{12} & 0\\	a_{21} & a_{22} & 0 \\	0 & 0 & a_{33}
			\end{pmatrix}\ \right| \ a_{kk}, a_{12}, a_{21}\in \mathbb{F}_q \right\} \\
			\hspace{1cm}	G_{D_j}\cap \mathrm{SL}_3(\mathbb{F}_q)&= &\left\{\left. \begin{pmatrix}
				a_{11} & 0 & 0\\	0 & a_{22} & a_{23} \\	0 & a_{32} & a_{33}
			\end{pmatrix}\ \right| \ a_{kk}, a_{23}, a_{32}\in \mathbb{F}_q  \right\}.
		\end{eqnarray*}
	
		Clearly,  $G_{D_i} \cap G_{D_j} \cap \mathrm{SL}_3(\mathbb{F}_q)$
		is the set of all diagonal matrices in $\mathrm{SL}_3(\mathbb{F}_q)$. Since $\mathscr{D}(\mathrm{SL}_3(\mathbb{F}_q))\subseteq G_{D_h}$ for  $1\leq h\leq r$, it follows that 
		$ G_{D_1}\cap G_{D_2}\cap  \cdots \cap G_{D_r} \cap \mathrm{SL}_3(\mathbb{F}_q)=\mathscr{D}(\mathrm{SL}_3(\mathbb{F}_q)).$ By the same argument used in part 2, $|\mathrm{Stab}((A_1,\ldots,A_r))_{\mathrm{SL}_3(\mathbb{F}_q)}| =|\mathrm{Stab}((D_1,\ldots,D_r))_{\mathrm{SL}_3(\mathbb{F}_q)}|$. 
		
	\end{enumerate}
\end{proof}
 We classify the diagonal tuples,  $\mathscr{D}(\mathrm{SL}_3(\overline{\mathbb{F}}_q)^r)$
based on stabilizer type under the conjugation action and the field where the eigenvalues exist. Let $D_i \in \mathscr{D}( \mathrm{SL}_3(\overline{\mathbb{F}}_q))$ be a diagonal matrix in $\mathrm{SL}_3(\overline{\mathbb{F}}_q)$. 
\begin{enumerate}
	\item  Reducible tuples: $\mathscr{D}_1^r(\mathrm{SL}_3(\mathbb{F}_q))  :=  \{(D_1,\ldots ,D_r) \ | \   \text{ there exists } i \text{ such that }  \\ D_i  \text{ has} \text{ distinct diagonal entries where } i \in \{1, \ldots r\} \}$
	\item$ \text{ Repeating tuples: } \mathscr{D}_2^r (\mathrm{SL}_3(\mathbb{F}_q)):=  \{(D_1,\ldots ,D_r) \ | \  \text{there exists  } i \text{ such that }  \\D_i \text{ has}  \text { exactly  two distinct diagonal entries where } i \in \{1, \ldots r\} \}$	
	\item $	\text{Central tuples: } \mathscr{D}_3^r (\mathrm{SL}_3(\mathbb{F}_q)):= \{(D_1,\ldots ,D_r) \ | \ D_i  \text { is central for all } i \in \{1, \ldots r\} \} $	
	\item $\text{ Irreducible tuples: }	\overline{\mathscr{D}}_3^r (\mathrm{SL}_3(\mathbb{F}_q)) :=  \{ (D_1,\ldots ,D_r) \ | \ \text{there exists } i  \in \{1,\ldots, r\} \\\text{ such that } \text {all entries of }D_i \in \overline{\mathbb{F}}_q\setminus \mathbb{F}_q \} $
	\item$ \text{ Partially Reducible tuples: }	\overline{\mathscr{D}}_2^r (\mathrm{SL}_3(\mathbb{F}_q)) :=  \{ (D_1,\ldots ,D_r) \ | \  \text{there exists } i  \in \{1,\ldots, r\} \text{ such that } \text {exactly two entries of }D_i \in \overline{\mathbb{F}}_q\setminus \mathbb{F}_q \} $
\end{enumerate}
We stratify the space $\mathrm{SL}_3(\mathbb{F}_q)^r$ based on the type of diagonal tuple that an element is diagonalizable to. 
\begin{definition}\label{Strata3}
	A tuple $(A_1,\ldots,A_r)\in \mathrm{SL}_3(\mathbb{F}_q)^r$ is said to be in $X$ stratum if there exists $P  \in \mathrm{GL}_3(\overline{\mathbb{F}}_q)$ such that $(PA_1P^{-1},\ldots,PA_rP^{-1})$ is an X tuple. Replace X with Reducible, Repeating, Central, Irreducible, or Partially Reducible to obtain the definition for the corresponding stratum. We denote the stratum $X$ using $\mathscr{T}_X$.
\end{definition}

 \section{Counting Characteristic Polynomials}\label{sec4}
  
Let $p(x)$ be a degree three polynomial with coefficients from $\mathbb{F}_q$. Then $p(x)$ belongs to one of the following three types. 
\begin{enumerate}
	\item[1.] \textit{Completely reducible over the base field} (all roots are in $\mathbb{F}_q$)\\
	If $p(x)$ is completely reducible over $\mathbb{F}_q$, then $p(x)$ has one of the following: three repeated roots, exactly two repeated roots or three distinct roots. We use the term completely reducible to indicate matrices with such a $p(x)$ as characteristic polynomial. 
	\item[2.] \textit{ Irreducible over the base field} (no roots in $\mathbb{F}_q$)\\
	Recall that all irreducible polynomials over finite fields are separable [\cite{Dumfoot}, Chapter 13, Proposition 37]\label{prop3.2.27}. Therefore, if $p(x)$ is irreducible over $\mathbb{F}_q$, then all the roots are distinct.
	\item[3.] \textit{Partially reducible over the base field} (only some roots are in the base field) \\
	If $p(x)$ is partially reducible, then all the roots are distinct and exactly one of them will be in $\mathbb{F}_q$.
\end{enumerate}
We will now count the number of polynomials of each type, utilizing the following lemma, which is easy to prove.
\begin{lemma}\label{lem3}
	The number of irreducible monic polynomials of degree two over $\mathbb{F}_q$ is $\frac{q^2-q}{2}.$ 
\end{lemma}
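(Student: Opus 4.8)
The plan is to count by complementary enumeration. A monic polynomial of degree two over $\mathbb{F}_q$ has the form $x^2 + bx + c$ with $b,c \in \mathbb{F}_q$, so there are exactly $q^2$ such polynomials in total. Since a quadratic fails to be irreducible precisely when it has a root in $\mathbb{F}_q$ (any nontrivial factorization of a degree-two polynomial must split it into two linear factors, by degree considerations), it suffices to count the monic reducible quadratics and subtract from $q^2$.

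First I would parametrize the reducible monic quadratics by their unordered pair of roots. Each such polynomial factors uniquely as $(x-\alpha)(x-\beta)$ with $\alpha,\beta \in \mathbb{F}_q$, and conversely every unordered pair $\{\alpha,\beta\}$ of elements of $\mathbb{F}_q$ determines exactly one such polynomial. I would then split into two cases: the perfect squares $(x-\alpha)^2$, of which there are $q$ (one for each $\alpha \in \mathbb{F}_q$); and the products of two distinct linear factors, which are in bijection with the two-element subsets of $\mathbb{F}_q$ and hence number $\binom{q}{2} = \frac{q(q-1)}{2}$. Adding these contributions gives $q + \frac{q(q-1)}{2} = \frac{q^2+q}{2}$ reducible monic quadratics.

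Subtracting from the total then yields
\[
q^2 - \frac{q^2+q}{2} = \frac{q^2-q}{2}
\]
irreducible monic quadratics, as claimed. The only point requiring any care is the dichotomy that underlies the complementary count, namely that reducibility of a degree-two polynomial over a field is equivalent to the existence of a root in that field; this is immediate and there is no substantial obstacle. As an alternative route, one could instead invoke the Gauss formula $N_q(n) = \frac{1}{n}\sum_{d \mid n}\mu(d)\,q^{n/d}$ for the number of monic irreducible polynomials of degree $n$ over $\mathbb{F}_q$, which specializes at $n=2$ to $\frac{1}{2}(q^2 - q)$ and gives the result directly; but the complementary count above is more elementary and self-contained, so I would present that.
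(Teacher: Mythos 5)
Your proof is correct and complete; the paper itself offers no proof of this lemma (it is simply declared ``easy to prove''), and your complementary count of the reducible monic quadratics via unordered pairs of roots, $q + \binom{q}{2} = \frac{q^2+q}{2}$, subtracted from the $q^2$ total, is the standard argument the author evidently had in mind. Nothing further is needed.
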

\begin{proposition}\label{thm3.4.3}
	Let $S$ be the set of monic degree three polynomials, $f(x) \in \mathbb{F}_q(x)$ with constant term $-1$ where $q=p^k$. 
	\begin{enumerate}
		\item[1.] The number of reducible polynomials in $S$ with three repeated roots is 
		$$\begin{cases}
			3  & \text{  if  }  p\equiv 1 \bmod 3 \ \ \text{  or  } \ \ p\equiv -1 \bmod 3 \text{ and  } k \text{ is even }\\
			1  &  \text{  if  }   p\equiv 0 \bmod 3 \ \ \text{ or  }  \ \ p\equiv -1 \bmod 3 \text{ and  } k \text { is odd}.
		\end{cases}$$
		\item[2.] The number of polynomials in $S$ with exactly two repeated roots is 
		$$\begin{cases}
			q-4 & \text{ if } p\equiv 1 \bmod 3 \ \ \text{ or } \ \ p\equiv -1 \bmod 3 \text{ and  } k \text{ is even }\\
			q-2 &  \text{ if } p\equiv 0 \bmod 3 \ \ \text{ or } \ \ p\equiv -1 \bmod 3 \text{ and  } k \text{ is odd}.
		\end{cases}$$
		\item[3.]  The number of polynomials in $S$ with three distinct roots is
		$$ \begin{cases}
			\ \frac{q^2-5q+10}{6} &  \text{ if } p\equiv 1 \bmod 3 \ \ \text{ or } \ \  p\equiv -1 \bmod 3 \text{ and } k \text { is even }\\
			\frac{(q-3)(q-2)}{6} &  \text { if } p\equiv 0 \bmod 3 \ \  \text { or } \ \ p\equiv -1 \bmod 3 \text { and } k  \text{ is odd }.
		\end{cases}$$
		\item[4.] The number of polynomials in $S$ with exactly one root in $\mathbb{F}_q$ is $\frac{q^2-q}{2}$.
		\item[5.] The number of irreducible polynomials in $S$ is
		$$ \begin{cases}
			\frac{q^2+q-2}{3} & \text{ if } p\equiv 1 \bmod 3 \ \ \text{ or } \ \  p\equiv -1 \bmod 3 \text{ and } k \text { is even }\\
			\frac{q^2+q}{3} &  \text { if } p\equiv 0 \bmod 3 \ \  \text { or } \ \ p\equiv -1 \bmod 3 \text { and } k  \text{ is odd}.
		\end{cases}$$
	\end{enumerate}
\end{proposition}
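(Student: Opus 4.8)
The plan is to pass from polynomials to their multisets of roots in $\overline{\mathbb{F}}_q$. Writing $f(x)=x^3+a_2x^2+a_1x+a_0$, the condition that the constant term be $-1$ is exactly $a_0=-1$, i.e. the product of the three roots equals $1$; since $a_1,a_2$ range freely, $|S|=q^2$. Thus each $f\in S$ is determined by an unordered triple $\{\alpha,\beta,\gamma\}$ of elements of $\overline{\mathbb{F}}_q$, subject to $\alpha\beta\gamma=1$, and the five listed types are mutually exclusive and exhaust $S$. The single quantity that controls the first three counts is $c:=\#\{\alpha\in\mathbb{F}_q : \alpha^3=1\}$, the number of cube roots of unity in $\mathbb{F}_q$.

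First I would compute $c$. Since $\mathbb{F}_q^\times$ is cyclic of order $q-1$, in characteristic $\neq 3$ the number of solutions of $x^3=1$ is $\gcd(3,q-1)$, which is $3$ when $3\mid q-1$ and $1$ otherwise; in characteristic $3$, $x^3-1=(x-1)^3$ gives $c=1$. Writing $q=p^k$ and reducing $q\bmod 3$ shows $c=3$ precisely when $p\equiv 1\pmod 3$, or $p\equiv -1\pmod 3$ with $k$ even, and $c=1$ in the remaining cases ($p=3$, or $p\equiv-1\pmod3$ with $k$ odd). This is exactly the dichotomy appearing in every branch of the statement.

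Next the three completely reducible types. For triple roots, $f=(x-\alpha)^3$ with $\alpha^3=1$, giving $c$ polynomials (part 1). For exactly two repeated roots, $f=(x-\alpha)^2(x-\beta)$ with $\alpha\ne\beta$; the constraint $\alpha^2\beta=1$ forces $\beta=\alpha^{-2}$ (so $\alpha\in\mathbb{F}_q^\times$), and $\alpha\ne\beta$ is equivalent to $\alpha^3\ne1$, so the count is $(q-1)-c$ (part 2). For three distinct roots in $\mathbb{F}_q$, I would count ordered triples: $(\alpha,\beta)\in(\mathbb{F}_q^\times)^2$ determines $\gamma=(\alpha\beta)^{-1}$, giving $(q-1)^2$ ordered triples with product $1$; subtracting the $3\left((q-1)-c\right)$ triples with exactly two coordinates equal and the $c$ with all three equal, then dividing by $6$, yields $\frac{(q-1)^2-3((q-1)-c)-c}{6}$, which simplifies to $\frac{q^2-5q+10}{6}$ when $c=3$ and $\frac{(q-2)(q-3)}{6}$ when $c=1$ (part 3).

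For part 4 I would set up a bijection between the polynomials in $S$ with exactly one $\mathbb{F}_q$-root and the monic irreducible quadratics: such an $f$ factors uniquely as $(x-\alpha)g(x)$ with $g$ irreducible of constant term $c_g\ne0$, and $\alpha\,c_g=1$ determines $\alpha=c_g^{-1}\in\mathbb{F}_q$ from $g$; conversely every monic irreducible quadratic yields exactly one admissible $f$, with $\alpha$ automatically distinct from the (non-$\mathbb{F}_q$) roots of $g$. Hence part 4 equals the count $\frac{q^2-q}{2}$ of Lemma \ref{lem3}. Finally, since the five types partition $S$ and $|S|=q^2$, part 5 follows by subtraction, $\#\{\text{irreducible}\}=q^2-(\text{parts }1\text{--}4)$, and a short arithmetic simplification gives $\frac{q^2+q-2}{3}$ or $\frac{q^2+q}{3}$ in the two cases. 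I expect the only real bookkeeping hazard to be part 3 --- keeping the ordered-versus-unordered passage and the inclusion--exclusion for repeated coordinates consistent with the value of $c$ --- while the conceptual crux is simply the computation of $c$ as a function of $p$ and $k$, which then propagates through all the cases.
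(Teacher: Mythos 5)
Your proposal is correct and follows essentially the same route as the paper: classify by root multiplicities, reduce parts 1--3 to the number of cube roots of unity $\gcd(3,q-1)$, get part 4 from the count of monic irreducible quadratics, and obtain part 5 by subtracting from $|S|=q^2$. The only (immaterial) difference is in part 3, where you start from all $(q-1)^2$ ordered pairs and subtract $3((q-1)-c)+c$, while the paper starts from $(q-1)(q-2)$ and subtracts twice the part-2 count; both give the same totals.
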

\begin{proof}
	Let $p(x)=x^3+sx^2+tx-1$. Since there are $q$ choices each for $s$ and $t$, there are $q^2$ polynomials of degree three with constant term one. We classify and count them below based on the multiplicity of roots. 
	\begin{enumerate}
		\item[1.] Observe that $p(x)=(x-a)(x-a)(x-a)$ is in $S$ if and only if $-a^3 =1$.
		The number of cube roots of unity is $\gcd(3,q-1)$. 	
		\item[2.] If $p(x)$ has exactly two roots, then $p(x)=(x-a)(x-a)(x-\frac{1}{a^2})$ where $a\neq 0$ and $\frac{1}{a^2}\neq a$. 
		Out of the $(q-1)$ choices for $a$, it suffices to discount the case when $a=\frac{1}{a^2}$. But $a=\frac{1}{a^2}$ if and only if $a^3=1$. Subtracting this count (from part 1) from $q-1$ yields the desired result. 
		\item[3.] Suppose $p(x)=(x-a)(x-b)(x-\frac{1}{ab})$ where $a\neq b\neq \frac{1}{ab}$. There are $(q-1)$ choices for $a$ and $(q-2)$ choices for $b$. But this  includes the cases where $a=\frac{1}{ab}$ and $b=\frac{1}{ab}$. Note that this is exactly twice the count from part two. In addition, since any permutation of $a,b$ and $\frac{1}{ab}$ results in the same polynomial $p(x)$, we divide by $3!$ to get the final count. Therefore, 
		$$ \begin{cases}
			((q-1)(q-2)-2(q-4))\frac{1}{6}= \frac{q^2-5q+10}{6} & \text{ if } p\equiv 1 \bmod 3  \\ & \text{ or }  p\equiv -1 \bmod 3 \text{ and } k \text { is even }\\
			((q-1)(q-2)-2(q-2))\frac{1}{6}=\frac{q^2-5q+6}{6}&  \text { if } p\equiv 0 \bmod 3   \\ &  \text { or }  p\equiv -1 \bmod 3 \text { and } k  \text{ is odd }.
		\end{cases}$$
		\item[4.] Suppose $p(x)$ has exactly one root in $\mathbb{F}_q$. Then $p(x)=(x^2+ax+b)(x-\frac{1}{b})$ where $x^2+ax+b$ is an irreducible degree two polynomial. By Lemma \ref{lem3}, there are exactly $\frac{q^2-q}{2}$ such polynomials. Thus, the result follows. 
		\item[5.] Since the total number of polynomials possible is $q^2$, to get the number of irreducible polynomials, we subtract the other cases from $q^2$. 
		When  $p\equiv 1 \bmod 3 $ or $p\equiv -1 \bmod 3 $ and $k$ is even, we have 
		$$ q^2-\left(\frac{q^2-5q+10}{6}\right)-(q-4)-3-\left(\frac{q^2-q}{2}\right)=\frac{q^2+q-2}{3} .$$
		When $p\equiv 0 \bmod 3$ or $p\equiv -1 \bmod 3 $ and $k$ is odd, the result follows similarly. 
	\end{enumerate}
\end{proof} 

\section{Counting Points}\label{sec5}
\subsection{Weyl Group Action}
Let $(D_1,\ldots,D_r)$ be a tuple of diagonal matrices in $\mathrm{SL}_3(\overline{\mathbb{F}}_q)^r$ and for $(A_1,\ldots,A_r) \in \mathrm{SL}_3(\mathbb{F}_q)^r$, suppose there exists $P\in \mathrm{GL}_3(\overline{\mathbb{F}}_q)$ such that $(PD_1P^{-1},\ldots,PD_rP^{-1})=(A_1,\ldots,A_r)$. The set of diagonal matrices is a maximal torus and the corresponding Weyl group acts on the diagonal tuples by simultaneously permuting the diagonal entries. 
Let
\begin{eqnarray*} 
	\mathscr{W} & = &  \left\{ 
\begin{pmatrix}
	1& 0 &0 \\
	0 & 1 & 0\\
	0 & 0 & 1\end{pmatrix},
\begin{pmatrix}
	0& -1 & 0\\
	-1 & 0& 0\\
	0 & 0 & -1 \end{pmatrix},
\begin{pmatrix}
	0& 0 &-1 \\
	0 & -1 & 0\\
	-1 & 0 & 0\end{pmatrix}, \right. \\ 
& & \left. \begin{pmatrix}
	-1& 0 &0 \\
	0 & 0 & -1\\
	0 & -1 & 0\end{pmatrix},
\begin{pmatrix}
	0& 0 &1 \\
	1 & 0 & 0\\
	0 & 1& 0\end{pmatrix},
\begin{pmatrix}
	0& 1 &0 \\
	0 & 0 & 1\\
	1 & 0 & 0\end{pmatrix}
\right\} 
\end{eqnarray*}
be the Weyl group of $\mathrm{SL}_3(\mathbb{F}_q)$ which acts on diagonal matrices by conjugation. 
Note that for $W_i\in \mathscr{W}$ and a diagonal matrix $D$, $W_iDW_i^{-1}$ is just a permutation of its entries. We want to compute the distinct orbits under the Weyl group action. We first classify the elements of $\mathscr{D}_3^r(\mathrm{SL}_3(\mathbb{F}_q))$ (basefield stratum) based on the number of distinct permutations possible under the action of $\mathscr{W}$. 
For $G=\mathrm{SL}_n(\mathbb{F}_q)$ and a stratum $X$, we use the notation $\mathscr{T}_X/G$ to denote the orbits under the conjugation action of $G$. Similarly, we use $\mathscr{T}_X\sslash G$ to denote the closed conjugation orbits or equivalently the orbits in $\mathscr{T}_X/G$ under Weyl group action. The following lemma computes the number of distinct conjugation orbits arising from a single diagonal tuple under the Weyl group action. 

\begin{lemma} \label{Rmk2}
	Let $(A_1,\ldots, A_r)\in \mathrm{SL}_3(\mathbb{F}_q)^r$ be diagonalizable to the tuple $(D_1,\ldots,D_r)$ in $ \mathrm{SL}_3(\mathbb{F}_q)^r$. Then the following are true. 
	\begin{enumerate}
		\item If there exists $i$ such that $D_i$ has distinct entries, then all the six permutations of $D_i$ and consequently that of $(D_1,\ldots,D_r) \in \mathrm{SL}_3(\mathbb{F}_q)$ are distinct.
		\item Suppose there exists $i,j$ such that $D_i$ and $D_j$ both have exactly two repeated entries but in different rows, i.e., $d_{i_s}=d_{i_t}$ and $d_{j_x}=d_{j_y}$ but $\{i_s,i_t\} \neq \{j_x,j_y\}$. Then, $(D_1,\ldots,D_r) $ has six distinct permutations. 
		\item If $D_i$ has two repeated entries at the same position, say $d_{i_s}=d_{i_t}$, and $D_i$ is not scalar for all $1\leq i \leq r$, then there are exactly three  distinct permutations of $(D_1,\ldots,D_r) \in \mathrm{SL}_3(\mathbb{F}_q).$ 
		\item All permutations of $(D_1,\ldots,D_r) \in \mathrm{SL}_3(\mathbb{F}_q)$ are identical if each $D_i$ is a scalar matrix for all $i$. 
	\end{enumerate}
	
\end{lemma}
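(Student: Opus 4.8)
The plan is to recognize the six matrices listed in $\mathscr{W}$ as realizing the full symmetric group $S_3$ acting on the three diagonal positions $\{1,2,3\}$: conjugating a diagonal tuple $(D_1,\ldots,D_r)$ by an element $W\in\mathscr{W}$ simultaneously applies one permutation $\sigma\in S_3$ to the diagonal entries of every $D_i$. The number of distinct permutations of the tuple is then exactly the size of the $S_3$-orbit of $(D_1,\ldots,D_r)$, which by the orbit--stabilizer theorem equals $6/|\mathrm{Stab}(D_1,\ldots,D_r)|$, where the stabilizer is the subgroup of those $\sigma\in S_3$ fixing the tuple entrywise. Thus the entire lemma reduces to computing one stabilizer subgroup of $S_3$ in each of the four cases.

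First I would compute the stabilizer of a single diagonal matrix $D_i$. A permutation $\sigma$ fixes $D_i$ if and only if it only permutes positions carrying equal diagonal entries; that is, $\mathrm{Stab}(D_i)$ is the parabolic (Young) subgroup of $S_3$ preserving the partition of $\{1,2,3\}$ induced by the equality relation on the diagonal entries. Concretely, if $D_i$ has three distinct entries the partition is $\{\{1\},\{2\},\{3\}\}$ and $\mathrm{Stab}(D_i)=\{e\}$; if $D_i$ has exactly one repeated pair of entries in positions $\{s,t\}$ then $\mathrm{Stab}(D_i)=\langle(s\,t)\rangle$ has order two; and if $D_i$ is scalar the partition is trivial and $\mathrm{Stab}(D_i)=S_3$. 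Since $\mathrm{Stab}(D_1,\ldots,D_r)=\bigcap_{i=1}^r\mathrm{Stab}(D_i)$, each case of the lemma becomes a single intersection computation.

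With this setup each case follows quickly. In case (1) some $D_i$ has distinct entries, so $\mathrm{Stab}(D_i)=\{e\}$ forces the total stabilizer to be trivial and the orbit to have six elements. In case (2) we have $\mathrm{Stab}(D_i)=\langle(s\,t)\rangle$ and $\mathrm{Stab}(D_j)=\langle(x\,y)\rangle$ with $\{s,t\}\neq\{x,y\}$; two distinct transpositions in $S_3$ share only the identity, so the intersection is trivial and the orbit again has six elements. In case (3) every non-scalar $D_i$ contributes the same subgroup $\langle(s\,t)\rangle$ while any scalar $D_i$ contributes $S_3$, so the intersection is exactly $\langle(s\,t)\rangle$, of order two, giving an orbit of size $6/2=3$; here the hypothesis that not all $D_i$ are scalar is what prevents the stabilizer from being all of $S_3$. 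In case (4) every $\mathrm{Stab}(D_i)=S_3$, so the stabilizer is $S_3$ and the orbit is a single point. The only steps needing care -- and the ones I would write out explicitly -- are the elementary group-theoretic fact used in case (2) that $\langle(s\,t)\rangle\cap\langle(x\,y)\rangle=\{e\}$ for distinct pairs, and the verification in case (3) that the ``same position'' hypothesis makes every non-scalar contribution the identical order-two subgroup, so that the intersection stays of order two rather than collapsing to the identity.
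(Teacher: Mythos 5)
Your proposal is correct and is essentially the paper's argument in a cleaner package: the paper likewise identifies conjugation by $\mathscr{W}$ with the $S_3$-action permuting diagonal entries and checks case by case which permutations fix the tuple, which is exactly your stabilizer computation. Your orbit--stabilizer formulation (stabilizer of each $D_i$ is the Young subgroup of the equality partition, and the tuple's stabilizer is the intersection) yields the same counts $6,6,3,1$ and is, if anything, tidier than the paper's direct enumeration in case (3).
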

\begin{proof} 
	\begin{enumerate}
		
		\item  Suppose  $D_i =\begin{pmatrix}
			d_1 &  0 & 0 \\0 & d_2 & 0\\
			0 & 0 & d_3
		\end{pmatrix}$  where $d_i\neq d_j$ for $i \neq j$. Then for $W \in \mathscr{W}$, $WD_iW^{-1} =D_i$ if and only if $W$ is the identity since any other $W$ permutes the rows of $D_i$. Consequently, $(WD_1W^{-1},\ldots,WD_rW^{-1})=(D_1,\ldots,D_r)$ if and only if $W$ is the identity matrix. 
		\item Let $D_i$ and $D_j$ be such that ${i_s,i_t}$ denotes the rows of $D_i$ with repeated entries, and ${j_x,j_y}$ denotes the rows of $D_j$ with repeated entries, where $\{i_s,i_t\} \neq \{j_x,j_y\}$ Since $\{i_s,i_t\}, \{j_x,j_y\} \subseteq \{1,2,3\}$, it follows that $\{i_s,i_t\}\cup \{j_x,j_y\} =\{1,2,3\}$. Let $W\neq I$. Then $W$ permutes at least two rows, say $u,v$. If $u,v\in \{i_s,i_t\} $ , i.e., $W$ does not permute any elements in $D_i$, then since $\{i_s,i_t\}\neq \{j_x,j_y\} $, either $u\notin \{j_x,j_y\} $ or $v \notin \{j_x,j_y\} $. Since two distinct entries of $D_j$ are permuted by $W$, it follows that $WD_jW^{-1} \neq D_j$. Thus,  $(WD_1W^{-1},\ldots,WD_rW^{-1})\neq(D_1,\ldots,D_r)$ and hence the tuple has six distinct permutations. 
		\item WLOG, assume each of the $D_i$ is of the form
		$D_i =\begin{pmatrix}
			d_i &  0 & 0 \\0 & d_i & 0\\
			0 & 0 & \frac{1}{d_i^2}
		\end{pmatrix}$ such that $d_i\neq \frac{1}{d_i^2}$. 
	It suffices to count the number of distinct ways to choose two rows to place the $d_i$s. This is given exactly by $\binom{3}{2}$. Therefore, there are exactly three matrices in $\mathscr{W}$ such that $WD_iW^{-1}\neq D_i$. Since all the $D_i$ have the same form or are scalar matrices, it follows that $(D_1,\ldots,D_r)$ has exactly three distinct simultaneous permutations by $\mathscr{W}$. 
	\item If $D_i$ is a scalar matrix for all $i$, then clearly $WD_iW^{-1}=D_i$ for all $i$ and $W\in \mathscr{W}$. 
	\end{enumerate}
	
\end{proof}
We now present some results that help identify the number of distinct permutations when the diagonal entries are elements of $\overline{\mathbb{F}}_q$.
\begin{lemma}\label{|G_A|}
	Let $\mathrm{SL}_n(\mathbb{F}_q)$ act on $\mathrm{SL}_n(\mathbb{F}_q)$ by conjugation. If $A$ is such that its characteristic polynomial, $\mathrm{char}(A)$, is irreducible over $\mathbb{F}_q$, then $G_A\cap \mathrm{SL}_n(\mathbb{F}_q)$, the stabilizer of $A$ in $\mathrm{SL}_n(\mathbb{F}_q)$ is of size $\frac{q^n-1}{q-1}$. 
\end{lemma}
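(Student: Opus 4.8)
The plan is to exploit the field structure that irreducibility of $\mathrm{char}(A)$ forces on the centralizer of $A$, and then to read off the determinant-one condition as a norm-kernel condition. First I would observe that since $\mathrm{char}(A)$ is irreducible of degree $n$, it coincides with the minimal polynomial of $A$: the minimal polynomial is a nonconstant monic divisor of $\mathrm{char}(A)$, and an irreducible polynomial has no proper nonconstant divisors. Hence $A$ is non-derogatory (cyclic), and the standard structure theorem for centralizers of cyclic matrices shows that the $\mathbb{F}_q$-algebra of matrices in $M_n(\mathbb{F}_q)$ commuting with $A$ is exactly $\mathbb{F}_q[A]$, the algebra generated by $A$. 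Intersecting with the invertible matrices, $G_A \cap \mathrm{GL}_n(\mathbb{F}_q) = \mathbb{F}_q[A]^{\times}$.

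Next I would identify $\mathbb{F}_q[A]$ as a field. Evaluation at $A$ gives an isomorphism $\mathbb{F}_q[x]/(\mathrm{char}(A)) \cong \mathbb{F}_q[A]$, and since $\mathrm{char}(A)$ is irreducible of degree $n$ the quotient is a field with $q^n$ elements, so $\mathbb{F}_q[A] \cong \mathbb{F}_{q^n}$. Therefore $G_A \cap \mathrm{GL}_n(\mathbb{F}_q) \cong \mathbb{F}_{q^n}^{\times}$ has order $q^n-1$, and the object we want, the stabilizer $G_A \cap \mathrm{SL}_n(\mathbb{F}_q)$, is precisely the subgroup of determinant-one elements of this cyclic group.

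The crucial step is to compute the determinant as a field norm. I would view $V = \mathbb{F}_q^n$ as a module over the field $\mathbb{F}_q[A] \cong \mathbb{F}_{q^n}$; because $V$ has $\mathbb{F}_q$-dimension $n$, it is a one-dimensional vector space over $\mathbb{F}_{q^n}$, and under a choice of identification $V \cong \mathbb{F}_{q^n}$ an element $f(A) \in \mathbb{F}_q[A]^{\times}$ acts as multiplication by the corresponding field element $\beta = f(\alpha)$, where $\alpha$ is the image of $A$. The determinant of the multiplication-by-$\beta$ operator on a finite separable extension equals the field norm, so $\det$ restricted to $\mathbb{F}_q[A]^{\times}$ is exactly the norm homomorphism $N_{\mathbb{F}_{q^n}/\mathbb{F}_q} \colon \mathbb{F}_{q^n}^{\times} \to \mathbb{F}_q^{\times}$. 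Consequently $G_A \cap \mathrm{SL}_n(\mathbb{F}_q) = \ker N_{\mathbb{F}_{q^n}/\mathbb{F}_q}$, and since the norm map of finite fields is surjective, its kernel has order $\tfrac{q^n-1}{q-1}$, which is the claim.

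I expect the identification of the determinant with the field norm to be the main obstacle, or at least the only non-routine point: one must justify that under $V \cong \mathbb{F}_{q^n}$ the matrix $f(A)$ genuinely corresponds to multiplication by $f(\alpha)$, and then invoke the standard fact that the determinant of such a multiplication operator is the norm. The remaining ingredients—the centralizer of a cyclic matrix being $\mathbb{F}_q[A]$, and the surjectivity of the finite-field norm—are classical and can be cited rather than reproven.
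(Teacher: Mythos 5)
Your proof is correct, but it takes a genuinely different route from the paper. The paper argues indirectly: it invokes a counting theorem of Randrianarisoa for the number of $n\times n$ matrices over $\mathbb{F}_q$ with a given irreducible characteristic polynomial, takes that count to be $|\mathrm{Orb}(A)|$ under $\mathrm{SL}_n(\mathbb{F}_q)$-conjugation, and divides $|\mathrm{SL}_n(\mathbb{F}_q)|$ by it via the Orbit--Stabilizer theorem. You instead compute the stabilizer directly: centralizer of a cyclic matrix equals $\mathbb{F}_q[A]\cong\mathbb{F}_{q^n}$, determinant restricts to the field norm on $\mathbb{F}_{q^n}^{\times}$, and the norm-one subgroup has order $\frac{q^n-1}{q-1}$ by surjectivity of the finite-field norm. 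Your route buys several things. It is self-contained (no external count to cite), it works verbatim for all $n$ whereas the paper's written proof restricts to $n=2,3$, and it yields extra structure --- the stabilizer is a cyclic group, namely the norm-one torus. It also quietly repairs a point the paper glosses over: the cited count gives the size of the $\mathrm{GL}_n(\mathbb{F}_q)$-conjugacy class, and one needs $\det\bigl(\mathrm{Stab}_{\mathrm{GL}_n}(A)\bigr)=\mathbb{F}_q^{\times}$ to conclude that the $\mathrm{SL}_n$-orbit is the whole class; that is exactly the surjectivity of the norm on $\mathbb{F}_q[A]^{\times}$, which your argument establishes and the paper's does not address. The paper's approach is shorter on the page once the counting theorem is granted, but yours is the more robust argument.
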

\begin{proof}
	Let $A\in \mathrm{SL}_n(\mathbb{F}_q)$ where $n=2,3$. Let  $\mathrm{SL}_n(\mathbb{F}_q)$ act on $\mathrm{SL}_n(\mathbb{F}_q)$ by conjugation: 
	$ P \cdot A =PAP^{-1}.$
	Recall that matrices conjugate to $A$ have the same characteristic and minimal polynomial. Since $\mathrm{char}(A)$ is irreducible over $\mathbb{F}_q$, $\mathrm{char}(A)$ is the same as the minimal polynomial of $A$. 
	Consequently, $B\in \mathrm{SL}_n(\mathbb{F}_q)$ is in the orbit of $A$ if and only if $\mathrm{char}(B)=\mathrm{char}(A)$. 
	To calculate $|G_A\cap \mathrm{SL}_n(\mathbb{F}_q)|$, we begin by computing the order of the orbit and then apply the Orbit-Stabilizer theorem. We use the following result from \cite{RTH2014} to compute the size of $\mathrm{Orb}(A)$. Let $M_n$ denote the set of all matrices with entries in $\mathbb{F}_q$. \\	 
	 {\bf Theorem.} (see \cite[Theorem 1]{RTH2014}) 
	\textit{Let $f(x) \in \mathbb{F}_q(x)$ be an irreducible polynomial of degree $n$. Then,
		the number of matrices in $M_n$ with characteristic polynomial $f$ is}
	$ \prod\limits_{i=1}^{n-1}(q^n-q^i).$\\
	\noindent
	Therefore, $|\mathrm{Orb}(A)|=\prod\limits_{i=1}^{n-1}(q^n-q^i)$. And $|\mathrm{SL}_n(\mathbb{F}_q)|=\frac{1}{q-1}\prod\limits_{i=0}^{n-1}(q^n-q^i).$
	Then, by the Orbit-Stabilizer Theorem, for $n=2,3$
	$$|G_A\cap \mathrm{SL}_n(\mathbb{F}_q)|= |\mathrm{Stab}_{\mathrm{SL}_n(\mathbb{F}_q)}(A)|=\frac{|\mathrm{SL}_n(\mathbb{F}_q)|}{|\mathrm{Orb}(A)|}=\frac{\frac{\prod\limits_{i=0}^{n-1}(q^n-q^i)}{q-1}}{\prod\limits_{i=1}^{n-1}(q^n-q^i)}=\frac{q^n-1}{q-1}.$$
	Therefore, the result follows. 
\end{proof}
We omit the proof of the following well known result from linear algebra. 
\begin{lemma}\label{3.2.12}
	Let $\mathbb{F}$ be a field and let $V$ be a $\mathbb{F}$-vector space. Let $T : V\rightarrow V$ be a linear transformation. Then the characteristic polynomial of $T$ is irreducible over $\mathbb{F}$ if and only if $T$ has no non-trivial invariant subspaces. 
\end{lemma}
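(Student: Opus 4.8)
The plan is to prove a biconditional about the characteristic polynomial of a linear transformation $T$ being irreducible over $\mathbb{F}$ if and only if $T$ has no non-trivial invariant subspaces. I would organize the argument as a clean contrapositive/direct split, using the correspondence between $T$-invariant subspaces and divisors of the minimal polynomial, together with the theory of cyclic vectors.

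First I would establish the easier direction by contraposition: suppose $T$ has a non-trivial invariant subspace $W$ with $0 \neq W \subsetneq V$. Restricting $T$ to $W$ yields a linear map $T|_W : W \to W$, and the characteristic polynomial of $T|_W$ divides the characteristic polynomial of $T$ (this is seen most transparently by choosing a basis of $W$ and extending it to a basis of $V$, which puts the matrix of $T$ in block upper-triangular form, so $\mathrm{char}(T) = \mathrm{char}(T|_W)\cdot\mathrm{char}(T|_{V/W})$). Since $0 < \dim W < \dim V$, the factor $\mathrm{char}(T|_W)$ is a non-constant proper divisor of $\mathrm{char}(T)$, so the characteristic polynomial is reducible. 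This direction is genuinely routine.

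For the converse, I would again argue by contraposition: assume $\mathrm{char}(T)$ is reducible, say $\mathrm{char}(T) = f(x)g(x)$ with both factors non-constant. The goal is to produce a non-trivial invariant subspace. The natural candidate is $W = \ker f(T)$ (or the image/kernel of a suitable factor). By the Cayley-Hamilton theorem $\mathrm{char}(T)(T) = 0$, so $f(T)g(T) = 0$; I would show $W = \ker f(T)$ is $T$-invariant (immediate, since $f(T)$ commutes with $T$), that $W \neq 0$ (because $f$ divides the minimal polynomial of at least one of the invariant-factor blocks, or more directly because $f$ shares a root with $\mathrm{char}(T)$ over the splitting field and one can exhibit a nonzero vector killed by $f(T)$), and that $W \neq V$ (otherwise $f(T) = 0$, forcing the minimal polynomial to divide $f$, contradicting that $g$ is a non-constant factor whose roots must also be eigenvalues/roots of the minimal polynomial). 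The cleanest route is to invoke the rational canonical form / primary decomposition: the factorization of $\mathrm{char}(T)$ into coprime pieces yields a $T$-invariant direct sum decomposition, and any non-constant proper factor gives a proper non-zero invariant summand.

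The main obstacle is the converse, and specifically the bookkeeping to guarantee $W$ is both non-zero and proper simultaneously under all factorization patterns (including repeated factors where $\mathrm{char}(T)$ and the minimal polynomial differ). The safest way to avoid subtle gaps is to pass to the minimal polynomial and primary decomposition: if $\mathrm{char}(T)$ is reducible but the minimal polynomial is an irreducible power $p(x)^e$, I would instead use a cyclic subspace generated by a single vector, whose dimension equals $\deg p < \dim V$, to produce the proper invariant subspace. I would therefore structure the converse around the two cases, the minimal polynomial having at least two distinct irreducible factors (use primary decomposition) versus a single irreducible factor of the form $p(x)^e$ (use a cyclic subspace of dimension $\deg p$), ensuring in each case that the invariant subspace is strictly between $0$ and $V$.
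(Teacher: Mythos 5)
The paper does not actually prove this lemma---it is introduced with the sentence ``We omit the proof of the following well known result from linear algebra''---so there is no argument of record to compare against; what follows is an assessment of your proposal on its own terms. Your route is correct and complete. The forward direction, via extending a basis of a non-trivial invariant subspace $W$ to put $T$ in block upper-triangular form so that $\mathrm{char}(T|_W)$ is a non-constant proper divisor of $\mathrm{char}(T)$, is standard. For the converse you correctly identify the one genuine pitfall: $\ker f(T)$ can be all of $V$ when $\mathrm{char}(T)$ has repeated irreducible factors (e.g.\ $T=0$ on a plane with $f=x$), so the naive ``take $W=\ker f(T)$'' argument has a gap exactly where you say it does, and your two-case repair closes it. When the minimal polynomial has at least two distinct irreducible factors, the primary decomposition gives a non-zero proper invariant summand; when it is $p(x)^e$ with $p$ irreducible, reducibility of $\mathrm{char}(T)=p(x)^k$ forces $k\geq 2$, hence $\deg p<\dim V$. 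The only detail to make explicit in that second case is that obtaining a cyclic subspace of dimension exactly $\deg p$ requires choosing a vector whose $T$-annihilator is exactly $p$ (take $v=p^{e-1}(T)w$ for some $w$ with $p^{e-1}(T)w\neq 0$); an arbitrary non-zero vector may generate a cyclic subspace of dimension up to $e\deg p$, which equals $\dim V$ when $T$ is cyclic. You should also record the standing assumption that $V$ is finite dimensional, without which the characteristic polynomial in the statement is undefined.
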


\noindent
For a matrix $A$, let $\mathrm{char}(A)$ \label{char}denote the characteristic polynomial of $A$.  
\begin{lemma}\label{lem4}
	Let $A,B$ be commuting matrices. If $\lambda$ is an eigenvalue of $A$, then $B$ preserves the $\lambda$-eigenspace of $A$. 
\end{lemma}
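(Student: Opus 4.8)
The plan is to prove that commuting matrices preserve each other's eigenspaces by a direct computation. Let $\lambda$ be an eigenvalue of $A$ and let $E_\lambda = \{v \in V \mid Av = \lambda v\}$ denote the corresponding eigenspace. I want to show that $B(E_\lambda) \subseteq E_\lambda$, i.e.\ that $Bv \in E_\lambda$ whenever $v \in E_\lambda$.

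First I would take an arbitrary $v \in E_\lambda$, so that $Av = \lambda v$. The key step is to apply $A$ to the vector $Bv$ and use commutativity $AB = BA$ to slide $A$ past $B$:
\begin{equation*}
A(Bv) = (AB)v = (BA)v = B(Av) = B(\lambda v) = \lambda (Bv).
\end{equation*}
This chain of equalities shows that $Bv$ is again an eigenvector of $A$ with eigenvalue $\lambda$ (or is zero), and hence $Bv \in E_\lambda$. Since $v$ was arbitrary, $B$ maps $E_\lambda$ into itself, which is exactly the statement that $B$ preserves the $\lambda$-eigenspace of $A$.

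There is essentially no obstacle here: the entire argument rests on the single line above, which is the standard trick of commuting the operators past one another. The only point to note is that the computation implicitly uses the linearity of $B$ (to write $B(\lambda v) = \lambda(Bv)$) and the definition of the eigenspace as a subspace, both of which are immediate. Because this is such a routine and well-known fact, I expect the proof in the paper to consist of little more than this displayed computation, possibly stated even more tersely.
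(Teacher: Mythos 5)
Your proof is correct and is essentially identical to the paper's: both take $v$ with $Av=\lambda v$ and compute $A(Bv)=BAv=\lambda Bv$ to conclude $Bv$ lies in the $\lambda$-eigenspace. No differences worth noting.
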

\begin{proof}
	Let $Au=\lambda u$. 
	Then, $A(Bu)=BAu=B\lambda u=\lambda B u.$
	Thus, $Bu\in \mathrm{Eig}_\lambda(A)$.
\end{proof}
\begin{lemma}\label{-D2}
	Let $A\in \mathrm{SL}_3(\mathbb{F}_q)$ and $\mathrm{char}(A)=(x-a)(x^2+sx+\frac{1}{a})$ be such that $(x^2+sx+\frac{1}{a})$ is irreducible over $\mathbb{F}_q$. Then if $B\in \mathrm{SL}_3(\mathbb{F}_q)$ commutes with $A$\, then $\mathrm{char}(B)=(x-b)(x^2+tx+\frac{1}{b})$  such that $(x^2+tx+\frac{1}{b})$ is irreducible over $\mathbb{F}_q$ or $\mathrm{char}(B)=(x-d)(x-c)(x-c)$ where $c,d\in \mathbb{F}_q$. 
\end{lemma}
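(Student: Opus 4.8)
The plan is to exploit the structure forced by the hypothesis: $A$ has one eigenvalue $a\in\mathbb{F}_q$ and a Galois-conjugate pair of eigenvalues in $\mathbb{F}_{q^2}\setminus\mathbb{F}_q$ (the roots of the irreducible quadratic), so $A$ has three distinct eigenvalues over $\overline{\mathbb{F}}_q$, and then to control how a commuting $B$ must act on the two canonical $A$-invariant subspaces defined over $\mathbb{F}_q$. First I would note that since $x^2+sx+\frac{1}{a}$ is irreducible over $\mathbb{F}_q$ (hence separable) and has no root equal to $a$, the two factors of $\mathrm{char}(A)$ are coprime. The primary decomposition then gives an $\mathbb{F}_q$-direct sum $\mathbb{F}_q^3 = V\oplus W$, where $V=\ker(A-aI)$ is one-dimensional and $W=\ker(A^2+sA+\frac{1}{a}I)$ is two-dimensional, both defined over $\mathbb{F}_q$. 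By Lemma \ref{3.2.12}, since $A|_W$ has irreducible characteristic polynomial it has no nontrivial invariant subspace, i.e.\ $A|_W$ acts irreducibly on $W$.

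Next I would observe that $B$ preserves both summands: it preserves $V$ by Lemma \ref{lem4} (the $a$-eigenspace of $A$), and it preserves $W=\ker(A^2+sA+\frac{1}{a}I)$ because $B$ commutes with the polynomial $A^2+sA+\frac{1}{a}I$ in $A$ and hence preserves its kernel. On the line $V$, the matrix $B$ sends the $\mathbb{F}_q$-line to itself and therefore acts by a scalar $b\in\mathbb{F}_q$, which is one eigenvalue of $B$ lying in $\mathbb{F}_q$.

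The crux is to pin down $B|_W$, and this is where I expect the main obstacle to lie. Because $A|_W$ satisfies the irreducible quadratic $x^2+sx+\frac{1}{a}$, the subring $K=\mathbb{F}_q[A|_W]\subseteq \mathrm{End}_{\mathbb{F}_q}(W)$ is a field isomorphic to $\mathbb{F}_{q^2}$, making $W$ a one-dimensional $K$-vector space. By Schur's lemma (equivalently, a $K$-linear endomorphism of a one-dimensional $K$-space is $K$-scalar), the commutant of $A|_W$ in $\mathrm{End}_{\mathbb{F}_q}(W)$ is exactly $K\cong\mathbb{F}_{q^2}$. Since $B|_W$ lies in this commutant, it corresponds to multiplication by some $\beta\in\mathbb{F}_{q^2}$, which forces a clean dichotomy: if $\beta\in\mathbb{F}_q$ then $B|_W$ is a scalar $c=\beta\in\mathbb{F}_q$, whereas if $\beta\in\mathbb{F}_{q^2}\setminus\mathbb{F}_q$ then the characteristic polynomial of $B|_W$ over $\mathbb{F}_q$ is the minimal polynomial of $\beta$, an irreducible quadratic. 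Establishing that the commutant is precisely this field — so that no "mixed" (reducible-but-nonscalar) behavior of $B|_W$ can occur — is exactly the step that yields the two, and only two, possibilities in the statement.

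Finally I would assemble $\mathrm{char}(B)=(x-b)\cdot\mathrm{char}(B|_W)$ and invoke $\det B=1$ to fix the constant terms. In the scalar case, $\mathrm{char}(B)=(x-b)(x-c)^2$ with $b,c\in\mathbb{F}_q$, which is the claimed form $(x-d)(x-c)(x-c)$ with $d=b$. In the irreducible case, $\mathrm{char}(B)=(x-b)(x^2+tx+u)$ where $u=N_{\mathbb{F}_{q^2}/\mathbb{F}_q}(\beta)=\det(B|_W)$, and the determinant condition $b\,u=1$ gives $u=b^{-1}$, yielding precisely $(x-b)(x^2+tx+\frac{1}{b})$ with the quadratic irreducible. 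This completes the argument.
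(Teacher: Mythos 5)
Your proof is correct, and its second half takes a genuinely different route from the paper's. The paper also begins by applying Lemma \ref{lem4} to the one-dimensional $a$-eigenspace of $A$ to extract an $\mathbb{F}_q$-eigenvalue of $B$, so that $\mathrm{char}(B)=(x-c)f(x)$ with $f$ quadratic over $\mathbb{F}_q$; but it then finishes by contradiction, running Lemma \ref{lem4} in the other direction: if $f$ had two distinct roots $c_1\neq c_2$ in $\mathbb{F}_q$, then $A$ would preserve the one-dimensional eigenspaces $W_{c_1},W_{c_2}\subset\mathbb{F}_q^3$ and hence acquire too many $\mathbb{F}_q$-eigenvalues, contradicting irreducibility of $x^2+sx+\tfrac1a$. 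You instead take the full primary decomposition $\mathbb{F}_q^3=V\oplus W$, show $B$ preserves both summands, and identify the commutant of $A|_W$ with the field $\mathbb{F}_q[A|_W]\cong\mathbb{F}_{q^2}$, from which the dichotomy (scalar versus irreducible quadratic) falls out directly rather than by elimination. The paper's argument is shorter and more elementary; yours is stronger, since it actually computes the centralizer of $A$ as $\mathbb{F}_q^\times\times\mathbb{F}_{q^2}^\times$ (before imposing $\det=1$), which is exactly the structure the paper later re-derives piecemeal in Lemma \ref{|G_A|}, Lemma \ref{commutativity}, and the bijection of Lemma \ref{PRS} for the partially reducible stratum. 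One cosmetic remark: the appeal to Lemma \ref{3.2.12} and Schur's lemma is not strictly needed once you know $\mathbb{F}_q[A|_W]$ is a field over which $W$ is one-dimensional, but it does no harm.
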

\begin{proof}
	Let $A\in \mathrm{SL}_3(\mathbb{F}_q)$ be such that $\mathrm{char}(A)=(x-a)(x^2+sx+\frac{1}{a})$ where $x^2+sx+\frac{1}{a}$ is irreducible over $\mathbb{F}_q$. If $AB=BA$, then $B$ preserves the eigenspace, $W_a$ of $a$
	by Lemma \ref{lem4}. 
	Since $A\in \mathrm{SL}_3(\mathbb{F}_q)$ and $a\in \mathbb{F}_q$, note that $W_a\subset \mathbb{F}_q^3$.
	This implies that $B$ has an eigenvalue $c$ in $\mathbb{F}_q$ ie., $\mathrm{char}(B)=(x-c)f(x)$ where $f(x)$ is a degree two polynomial over $\mathbb{F}_q$.

	If $\mathrm{char}(B)=(x-c)(x-c_1)(x-c_2)$ where $c_1\neq c_2$ and $c_1,c_2\in \mathbb{F}_q$, then $A$ preserves the one-dimensional eigenspaces $W_{c_1}$ and $W_{c_2}$ by Lemma \ref{lem4}. This is a contradiction and hence $f(x)$ cannot have distinct roots over $\mathbb{F}_q$. 
\end{proof}
\begin{lemma}\label{irred}
	Let $A,B \in \mathrm{SL}_n(\mathbb{F}_q)$ such that $AB=BA$ and $\mathrm{char}(A)$ is irreducible over $\mathbb{F}_q$ for $n=2,3$. Then $B$ is either central or has an irreducible characteristic polynomial. 
\end{lemma}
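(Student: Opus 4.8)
The plan is to argue by contrapositive: assuming $B$ is not central, I will show that $\mathrm{char}(B)$ must be irreducible over $\mathbb{F}_q$. Equivalently, I will show that if $\mathrm{char}(B)$ is reducible over $\mathbb{F}_q$, then $B$ is forced to be scalar. The engine of the argument is the interplay between Lemma \ref{lem4} (commuting matrices preserve each other's eigenspaces) and Lemma \ref{3.2.12} (irreducibility of $\mathrm{char}(A)$ is equivalent to $A$ having no nontrivial invariant subspace over $\mathbb{F}_q$).

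First I would observe that because $n=2,3$, a reducible degree-$n$ characteristic polynomial over $\mathbb{F}_q$ necessarily has a linear factor, so $B$ has an eigenvalue $\lambda \in \mathbb{F}_q$. Indeed, for $n=2$ reducibility means the polynomial splits into two linear factors, and for $n=3$ a reducible cubic must split off at least one linear factor (either three linear factors, or a linear factor times an irreducible quadratic). Let $W=\mathrm{Eig}_\lambda(B)\neq 0$ be the associated eigenspace, which is a nonzero subspace of $\mathbb{F}_q^n$.

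Next, since $AB=BA$, Lemma \ref{lem4} applied with the roles of $A$ and $B$ interchanged shows that $A$ preserves $W$. Thus $W$ is a nonzero $A$-invariant subspace of $\mathbb{F}_q^n$. But $\mathrm{char}(A)$ is irreducible over $\mathbb{F}_q$, so by Lemma \ref{3.2.12} the only $A$-invariant subspaces are $0$ and $\mathbb{F}_q^n$; since $W\neq 0$, I conclude $W=\mathbb{F}_q^n$. This forces $B=\lambda I$, so $B$ is scalar and hence central, which completes the contrapositive and proves the lemma.

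The argument is short, and the only genuine obstacle is the first step: verifying that reducibility of $\mathrm{char}(B)$ produces an honest $\mathbb{F}_q$-rational eigenvalue. This is precisely where the restriction to $n\in\{2,3\}$ is essential, since for $n\geq 4$ a reducible characteristic polynomial could be a product of irreducible quadratics with no root in $\mathbb{F}_q$; in that case $B$ would be neither central nor possess an irreducible characteristic polynomial, and the statement would fail without modification. Everything else is a direct application of the two cited lemmas.
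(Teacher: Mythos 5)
Your proposal is correct and follows essentially the same route as the paper: extract an $\mathbb{F}_q$-rational eigenvalue of $B$ from the reducibility of $\mathrm{char}(B)$, use Lemma \ref{lem4} to see that $A$ preserves the corresponding eigenspace, and invoke Lemma \ref{3.2.12} to rule out a proper nonzero invariant subspace. If anything, your version is slightly more careful than the paper's, which asserts a multiplicity-one linear divisor (not guaranteed when $\mathrm{char}(B)=(x-a)^3$), whereas you correctly dispose of the full-eigenspace case by concluding $B=\lambda I$ is central.
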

\begin{proof}
	Suppose $B$ is not central. If $\mathrm{char}(B)$ is reducible, then it has a non-trivial divisor $x-a$ with multiplicity one such that $a\in \mathbb{F}_q$. Then the corresponding eigenspace, $W_a \subset \mathbb{F}_q^3$ is preserved by $A$, by Lemma \ref{lem4}.  This is a contradiction since $A$ has no non-trivial subspaces by Lemma \ref{3.2.12}. Therefore, the characteristic polynomial of $B$ is irreducible. 
\end{proof}

\begin{lemma}\label{commutativity}
	Let  $A\in \mathrm{GL}_n(\mathbb{F}_q)$  have an irreducible characteristic polynomial over $\mathbb{F}_q$. If $q(x)$ is an irreducible polynomial of degree $n$ over $\mathbb{F}_q$, then there exists $B\in \mathrm{GL}_n(\mathbb{F}_q)$ that commutes with $A$ such that the characteristic polynomial of $B$ is $q(x)$.
\end{lemma}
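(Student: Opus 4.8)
The plan is to exploit the fact that an irreducible characteristic polynomial forces the algebra generated by $A$ to be a field isomorphic to $\mathbb{F}_{q^n}$, and then to locate inside this field an element whose minimal polynomial is the prescribed $q(x)$. The point is that I do not need to determine the full centralizer of $A$; it is enough to work inside the commutative subalgebra $\mathbb{F}_q[A]$, every element of which automatically commutes with $A$.

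First I would observe that since $\mathrm{char}(A)=p(x)$ is irreducible of degree $n$, it coincides with the minimal polynomial of $A$, so $A$ is non-derogatory and the algebra $\mathbb{F}_q[A]\subseteq M_n(\mathbb{F}_q)$ generated by $A$ has dimension exactly $n$ over $\mathbb{F}_q$. The evaluation map $\mathbb{F}_q[x]\to \mathbb{F}_q[A]$ sending $x\mapsto A$ then induces an isomorphism $\mathbb{F}_q[A]\cong \mathbb{F}_q[x]/(p(x))$, and because $p$ is irreducible this quotient is a field with $q^n$ elements; hence $\mathbb{F}_q[A]\cong \mathbb{F}_{q^n}$. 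Every element of $\mathbb{F}_q[A]$ is a polynomial in $A$ and therefore commutes with $A$, so this field is a supply of commuting matrices from which I only need to extract one with the correct characteristic polynomial.

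Next I would invoke the standard finite-field fact that $\mathbb{F}_{q^n}$ contains a root of every irreducible polynomial of degree $n$ over $\mathbb{F}_q$ (being the splitting field of $x^{q^n}-x$, whose irreducible factors over $\mathbb{F}_q$ are exactly the irreducibles of degree dividing $n$). Thus $q(x)$ has a root $\beta\in\mathbb{F}_{q^n}$, and since $\deg q = n = [\mathbb{F}_{q^n}:\mathbb{F}_q]$, the minimal polynomial of $\beta$ over $\mathbb{F}_q$ is exactly $q(x)$. Transporting $\beta$ through the isomorphism $\mathbb{F}_{q^n}\cong\mathbb{F}_q[A]$ yields a matrix $B\in\mathbb{F}_q[A]$ whose minimal polynomial over $\mathbb{F}_q$ is $q(x)$.

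Finally I would promote the minimal polynomial to the characteristic polynomial: the minimal polynomial of $B$ divides $\mathrm{char}(B)$, both are monic, $\mathrm{char}(B)$ has degree $n$, and the minimal polynomial $q(x)$ already has degree $n$, forcing $\mathrm{char}(B)=q(x)$. Since $q$ is irreducible of degree $n\ge 2$ its constant term is nonzero, so $\det B=(-1)^n q(0)\neq 0$ and $B\in\mathrm{GL}_n(\mathbb{F}_q)$, while $B$ commutes with $A$ by construction. The step I would be most careful about is the identification $\mathbb{F}_q[A]\cong\mathbb{F}_{q^n}$ together with the degree count that upgrades the minimal polynomial of $B$ to its characteristic polynomial; everything else is a direct application of basic finite-field theory.
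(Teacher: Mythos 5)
Your proposal is correct and follows essentially the same route as the paper: both identify $\mathbb{F}_q[A]\cong\mathbb{F}_q[x]/(p(x))\cong\mathbb{F}_{q^n}$ as a commutative subalgebra of matrices commuting with $A$, use the uniqueness of the degree-$n$ extension to find a root of $q(x)$ inside it, and upgrade the minimal polynomial of the resulting matrix to its characteristic polynomial by the degree count. Your write-up is, if anything, slightly more explicit about the final degree argument and the invertibility of $B$ than the paper's version.
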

\begin{proof}
	Let $A\in \mathrm{GL}_n(\mathbb{F}_q)$ such that the characteristic polynomial, $\mathrm{char}(A)=p(x)\in \mathbb{F}_q(x)$ is irreducible over $\mathbb{F}_q$. Let $M_n(\mathbb{F}_q)$ denote the set of $n \times n$ matrices over $\mathbb{F}_q$. Define the map: 
	\begin{eqnarray*}
	 \tilde{\phi}: \mathbb{F}_q[x]  &\longrightarrow & M_n(\mathbb{F}_q) \\
	 f(x) &\longmapsto& f(A)
	 \end{eqnarray*}
	 Since $p(x)$ is irreducible, the minimal polynomial of $A$ is the same as the characteristic polynomial i.e., $p(A)=0$. Therefore,  $p(x) \in \ker(\tilde{\phi})$. 
	 Let $L:=\frac{\mathbb{F}_q(x)}{<p(x)>}$. Since $p(x)$ is irreducible and $M_n(\mathbb{F}_q)$ is finite dimensional over $\mathbb{F}_q$, we have an inclusion: 
	 $ \phi: L \hookrightarrow M_n(\mathbb{F}_q)$. 
	 Then, every $U \in \phi(L)$ commutes with $A$ since $U$ is a polynomial in $A$ and $A$ commutes with scalars and powers of $A$. 
	 Restricting the inclusion to the set of units in $L$ induces an inclusion,
	 $\phi: L^\times \hookrightarrow \mathrm{GL}_n(\mathbb{F}_q).$
	 
	 Let $c\in L$ have the minimal polynomial, $m(x)$ and let $C=\phi(c)$. Since $m(c)=0$, we have that $m(C)=0$ which implies that $m(x)$ divides the characteristic polynomial, $\mathrm{char}(C)$. If $m(x)$ is irreducible such that $\mathrm{deg}(m)=n=[L:\mathbb{F}_q]$, then $\mathrm{char}(C)=m(x)$. 
	 
	 Now suppose $q(x)$ is an irreducible monic polynomial of degree, $n$. 
	 Since $\mathbb{F}_q$ has a unique extension $L$ of degree $n$, if $b \in L$ is any root of $q(x)$, then $B=\phi(b)$ has characteristic polynomial $q(x)$. As noted before, then $B \in \phi(L)$ and hence commutes with $A$. 
\end{proof}

\begin{lemma}\label{lem3.2.26}
	Let $A\in \mathrm{SL}_3(\mathbb{F}_q)$ be such that $\mathrm{char}(A)$ is not completely reducible over $\mathbb{F}_q$  where $q=p^k$ for a prime $p$. If $A$ is diagonalizable to $D\in \mathrm{SL}_3(\overline{\mathbb{F}}_q)$, then all the entries of $D$ are distinct. Furthermore, let $(A_1,\ldots,A_r)$ be an element of the irreducible stratum such that $A_i=A$ for some $i$. Then $(A_1,\ldots,A_r)$ has exactly three distinct permutations if $\mathrm{char}(A)$ is irreducible.
\end{lemma}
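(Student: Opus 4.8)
The statement splits into two claims, and I would treat them separately. For the first claim I would fall back on the root-structure dichotomy established in Section~\ref{sec4}: since $\mathrm{char}(A)$ is not completely reducible over $\mathbb{F}_q$, it is either irreducible or partially reducible. In the irreducible case the polynomial is separable (irreducible polynomials over a finite field have distinct roots), so its three roots are pairwise distinct; in the partially reducible case $\mathrm{char}(A)=(x-a)(x^2+sx+b)$ with $x^2+sx+b$ irreducible, whose two roots are distinct and lie in $\mathbb{F}_{q^2}\setminus\mathbb{F}_q$, hence differ from $a\in\mathbb{F}_q$. Either way the three eigenvalues are pairwise distinct, and since $A$ is diagonalizable the diagonal entries of $D$ are exactly these eigenvalues, so they are distinct. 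I expect this part to be routine.

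For the second claim I would first pin down the algebra generated by the tuple. Writing $\lambda,\lambda^q,\lambda^{q^2}\in\mathbb{F}_{q^3}$ for the distinct roots of the irreducible polynomial $\mathrm{char}(A_i)$, the key structural input is that every $A_j$ lies in the field $L:=\mathbb{F}_q[A_i]\cong\mathbb{F}_{q^3}$. I would obtain this by combining Lemma~\ref{irred} (each commuting $A_j$ is central or has irreducible characteristic polynomial) with the centralizer count of Lemma~\ref{|G_A|}: since $|G_{A_i}\cap\mathrm{SL}_3(\mathbb{F}_q)|=\frac{q^3-1}{q-1}$ matches the order of the norm-one subgroup of $L^\times$, and $L^\times\cap\mathrm{SL}_3(\mathbb{F}_q)\subseteq G_{A_i}\cap\mathrm{SL}_3(\mathbb{F}_q)$, the two must coincide. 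Hence any $A_j\in\mathrm{SL}_3(\mathbb{F}_q)$ commuting with $A_i$ lies in $L$, i.e. $A_j=f_j(A_i)$ for some polynomial $f_j$ (scalar $A_j$ also lying in $L$). Consequently $A_i$ and all the $A_j$ share the same three Frobenius-conjugate eigenlines $W,W^{(q)},W^{(q^2)}$, with $A_j$ acting on $W$ by a scalar $\mu_j=f_j(\lambda)$ and on $W^{(q^m)}$ by $\mu_j^{q^m}$.

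Finally I would read off the permutation count from this Galois structure. Applying Frobenius $x\mapsto x^q$ to a simultaneous diagonalization cyclically permutes the three eigenlines, $W\mapsto W^{(q)}\mapsto W^{(q^2)}\mapsto W$, so it acts on the ordered diagonal tuple $\mathbf{D}=(D_1,\dots,D_r)$ as a $3$-cycle $\sigma\in\mathscr{W}$. The diagonalizations of $(A_1,\dots,A_r)$ that are compatible with the common field structure are therefore exactly the three cyclic shifts $\mathbf{D},\sigma\mathbf{D},\sigma^2\mathbf{D}$, and they are pairwise distinct because $D_i$ has distinct entries by the first part. The step I expect to be the crux is excluding the three odd permutations: an ordering that is not a cyclic shift would assign, on a single eigenline carrying the $A_i$-eigenvalue $\lambda^{q^m}$, an $A_j$-eigenvalue other than $f_j(\lambda^{q^m})$, contradicting $A_j=f_j(A_i)$. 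This confines the realizable permutations to the cyclic group $\langle\sigma\rangle$ rather than all of $\mathscr{W}\cong S_3$, which is precisely what distinguishes this count of three from the six arising in the split case of Lemma~\ref{Rmk2}.
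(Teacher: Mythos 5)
Your treatment of the first claim coincides with the paper's: both split $\mathrm{char}(A)$ into the irreducible and the partially reducible case and invoke separability of irreducible polynomials over finite fields, so that part is routine as you expect. For the count of three, you take a genuinely different and more structural route. The paper argues by averaging: it takes $|G_A\cap\mathrm{SL}_3(\mathbb{F}_q)|=q^2+q+1$ from Lemma \ref{|G_A|}, removes the central elements, notes via Lemmas \ref{irred} and \ref{commutativity} that every remaining element has an irreducible characteristic polynomial and that every such polynomial occurs, and then divides by the polynomial count from Proposition \ref{thm3.4.3} to obtain $3$ in each congruence case. That computation tacitly assumes each irreducible cubic occurs equally often among the non-central stabilizer elements; your identification of $G_{A_i}\cap\mathrm{SL}_3(\mathbb{F}_q)$ with the norm-one subgroup of $L^\times\cong\mathbb{F}_{q^3}^\times$ supplies exactly that uniformity (each irreducible cubic has precisely its three roots in $L$) and dispenses with the case split on $\gcd(3,q-1)$, so your version is, if anything, tighter. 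One caveat on your final step: the relation $A_j=f_j(A_i)$ ties the $A_j$-eigenvalue to the $A_i$-eigenvalue on each individual eigenline, and this pairing is preserved by \emph{every} simultaneous permutation, odd ones included, so the contradiction you describe does not literally arise. What actually singles out the three cyclic shifts is that only they return a tuple in Frobenius-coherent order $(\nu_j,\nu_j^q,\nu_j^{q^2})$ — equivalently, the only tuples in $(G_{A_i}\cap\mathrm{SL}_3(\mathbb{F}_q))^r$ conjugate to $(A_1,\ldots,A_r)$ are its three Galois conjugates, obtained by applying the Frobenius of $L$ coordinatewise, and these are pairwise distinct because $\lambda\notin\mathbb{F}_q$. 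With that wording repaired your argument is complete and matches how the count of three is actually used in Theorem \ref{n3}.
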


\begin{proof}
	Let $A$ and $D$ be as defined in the statement of the lemma.  Then there are two possibilities for the characteristic polynomial, $\mathrm{char}(D)=p(x)$ :
	$(x-a)(x^2+bx+1/a)$ or $x^3+ax^2+bx-1$. 
	
	\begin{itemize}
		\item $p(x)=x^3+ax^2+bx-1$:  As discussed earlier in Section \ref{prop3.2.27}, $p(x)$ is separable. Therefore, if $\mathrm{char}(D)=x^3+ax^2+bx-1$, then $D$ has distinct entries.   
		\item If $\mathrm{char}(D)= (x-a)(x^2+bx+\frac{1}{a})$ such that $a\in \mathbb{F}_q$, then $(x^2+bx+\frac{1}{a})$ has two distinct roots say $d_1,d_2$. Since irreducible polynomials are separable over finite fields, $d_1\neq d_2$. Clearly, $d_1,d_2\neq a$ since $a\in \mathbb{F}_q$. Therefore, $D$ has distinct entries. 
	\end{itemize}
	Now suppose $\mathrm{char}(A)=p(x)=x^3+ax^2+bx-1$ is irreducible.  We proceed to show that a tuple containing $A$ has exactly three distinct permutations.

	 From Lemma \ref{|G_A|}, the size of $|G_A\cap \mathrm{SL}_3(\mathbb{F}_q)|$ is $q^2+q+1$. If $(A_1,\ldots,A_r)\in \mathrm{SL}_3(\mathbb{F}_q)^r$ is such that $A_i=A$ for some $A$, then $A_j\in G_A$ for $1\leq j \leq r$.  By Lemma \ref{irred}, any matrix $A_j$ that commutes with $A$ is either a central element or has an irreducible characteristic polynomial. Note that by Lemma \ref{commutativity}, if $q(x)$ is any irreducible monic degree three polynomial with constant term $-1$, then there exists a matrix $B\in \mathrm{SL}_3(\mathbb{F}_q)$ with $q(x)$ as the characteristic polynomial such that $B$ commutes with $A$ . Consequently, every such $q(x)$ appears as the characteristic polynomial of a matrix in $G_A$.   Let $d_3$ be the number of central elements, $m_3$ be the number of permutations of $A$ in $\mathrm{SL}_3(\mathbb{F}_q)$ and $l$ denote the number of irreducible degree three polynomials with determinant $-1$ from Proposition \ref{thm3.4.3}. 
	 Then, 
	 \begin{eqnarray*} m_3 &=& \frac{|G_A\cap \mathrm{SL}_3(\mathbb{F}_q)-d_3|}{l} \\
	 	&=&\begin{cases}  \frac{q^2+q+1-3}{\frac{q^2+q-2}{3}}=3\text{ if } p\equiv 1 \bmod 3  \text{ or }  p\equiv -1\bmod 3  \text{ and } k \text{ is even}  \\
		\frac{q^2+q+1-1}{\frac{q^2+q}{3}}=3; \text { if } p\equiv 0 \bmod 3   \text { or }  p\equiv -1 \bmod 3 \text { and } k  \text{ is odd.}
	\end{cases}
	\end{eqnarray*}
\end{proof}
Let $Z:=Z(\mathrm{GL}_2(\mathbb{F}_q))$ denotes the center of $\mathrm{GL}_2(\mathbb{F}_q)$, the set of scalar matrices with entries from $\mathbb{F}_q$. Let $\overline{\mathscr{D}}(\mathrm{GL}_2(\overline{\mathbb{F}}_q -\mathbb{F}_q))$ denote the set of $2 \times 2$ diagonal matrices with entries from $\overline{\mathbb{F}}_q-\mathbb{F}_q$. In the following lemma, we use $\overline{\mathscr{D}}_2$ to denote $\overline{\mathscr{D}}(\mathrm{GL}_2(\overline{\mathbb{F}}_q -\mathbb{F}_q))$ and $Z$ to denote the central matrices in $\mathrm{GL}_2(\mathbb{F}_q)$. 
 Recall the definition of partially reducible tuples: 
$\overline{\mathscr{D}}_2^r (\mathrm{SL}_3(\mathbb{F}_q)) :=  \{ (D_1,\ldots ,D_r) \ | \  \text{there exists } i  \in \{1,\ldots, r\} \text{ such that } \text {exactly two entries of }D_i \in \overline{\mathbb{F}}_q\setminus \mathbb{F}_q \} $ and the stratum $\mathscr{T}_{\overline{D}_{2}^r}(\mathrm{SL}_3(\mathbb{F}_q))$ as the set of tuples in $\mathrm{SL}_3(\mathbb{F}_q)$ that are simultaneously diagonalizable to a partially reducible tuple [see Definition \ref{Strata3}].
  In the following proof, we sometimes omit $\mathrm{SL}_3(\mathbb{F}_q)$ from the bracket of the stratum for convenience. 
\begin{lemma}\label{PRS} 
	There is a bijective correspondence between the orbits in $\mathscr{T}_{\overline{D}_{2}^r}(\mathrm{SL}_3(\mathbb{F}_q))$ and the set of commuting tuples in $(\overline{\mathscr{D}}_2\cup Z)^r - Z^r$ upto simultaneous permutation. 
\end{lemma}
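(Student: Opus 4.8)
The plan is to exploit the canonical splitting $\mathbb{F}_q^3 = W \oplus V$ attached to any tuple in the partially reducible stratum, and to identify an orbit of the stratum with the restriction of its commuting tuple to the two-dimensional block $V$. Concretely, fix a representative $(A_1,\dots,A_r)$ in $\mathscr{T}_{\overline{D}_2^r}(\mathrm{SL}_3(\mathbb{F}_q))$ and choose an index $i$ for which $\mathrm{char}(A_i) = (x-a)(x^2+sx+\tfrac1a)$ with the quadratic factor irreducible. The primary decomposition of $A_i$ gives $W := \ker(A_i - aI)$, one-dimensional and defined over $\mathbb{F}_q$, and $V := \ker(A_i^2 + sA_i + \tfrac1a I)$, two-dimensional with $A_i|_V$ having irreducible characteristic polynomial (Lemma \ref{3.2.12}). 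Since every $A_j$ commutes with $A_i$, it commutes with every polynomial in $A_i$ and hence preserves both $W$ and $V$; this is Lemma \ref{lem4} applied to $A_i$, and the reason $A_j$ cannot create an extra rational eigenline inside $V$ is exactly Lemma \ref{-D2}. Thus each $A_j$ decomposes as $A_j|_W \oplus A_j|_V$, with $A_j|_W$ a scalar $w_j \in \mathbb{F}_q$ and $A_j|_V \in \mathrm{GL}_2(\mathbb{F}_q)$.

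Next I would define the forward map: send the orbit of $(A_1,\dots,A_r)$ to the tuple $(A_1|_V,\dots,A_r|_V)$ diagonalized over $\overline{\mathbb{F}}_q$. Because $A_i|_V$ has irreducible characteristic polynomial and the $A_j|_V$ pairwise commute, Lemma \ref{irred} in the $n=2$ case forces each $A_j|_V$ to be either central or to have an irreducible characteristic polynomial; diagonalizing the commuting family simultaneously over $\overline{\mathbb{F}}_q$ places each entry of the tuple in $Z$ (a scalar) or in $\overline{\mathscr{D}}_2$ (a Frobenius-conjugate pair in $\overline{\mathbb{F}}_q \setminus \mathbb{F}_q$, necessarily lying in the unique quadratic extension $\mathbb{F}_{q^2}$). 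As $A_i|_V$ is non-central, the image lands in $(\overline{\mathscr{D}}_2 \cup Z)^r - Z^r$, and the only indeterminacy is the order of the two eigenvalues of $V$, i.e. simultaneous permutation by the residual Weyl group $S_2$. The scalar $w_j$ on $W$ is not lost, since the determinant condition $w_j \det(A_j|_V) = 1$ recovers it.

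To prove this is a bijection I would construct the inverse explicitly. Given a commuting tuple in $(\overline{\mathscr{D}}_2 \cup Z)^r - Z^r$, realize each entry over $\mathbb{F}_q$: a scalar entry as $c_j I_2$, and a conjugate-pair entry as a $2\times2$ matrix over $\mathbb{F}_q$ with the corresponding irreducible quadratic as characteristic polynomial. Lemma \ref{commutativity} applied with $n=2$ guarantees these realizations can be chosen to pairwise commute, since the non-scalar ones all lie in a common copy of $\mathbb{F}_{q^2} \hookrightarrow M_2(\mathbb{F}_q)$ and scalars are central; then one forms the block matrix $A_j$ acting by the scalar $\det(A_j|_V)^{-1}$ on a fixed rational line and by the chosen $2\times2$ matrix on the complementary plane, which has determinant $1$ and is partially reducible for at least one index. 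Well-definedness on both sides reduces to the standard fact, already used in the stratification, that two diagonalizable tuples in $\mathrm{SL}_3(\mathbb{F}_q)$ are conjugate exactly when their diagonalizations over $\overline{\mathbb{F}}_q$ agree up to the Weyl group $S_3$; here the slot $W$ is canonically distinguished as the unique common $\mathbb{F}_q$-rational eigenline, so the $S_3$-action collapses to the $S_2$-action on the two $V$-slots.

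The step I expect to be the main obstacle is the bookkeeping that matches the full Weyl group $S_3$ on the three-dimensional diagonal data with the residual $S_2$ on the two-dimensional block, together with checking that the forward and inverse constructions are mutually inverse as maps of orbit sets rather than merely of tuples. The crux is that the one-dimensional rational eigenspace is forced and canonical for every tuple in this stratum: this is what prevents permutations mixing the rational eigenvalue with the conjugate pair from producing genuinely new orbits, and it is precisely where Lemma \ref{-D2} does the real work by ruling out a second rational eigenvalue in the block $V$. Everything else — the determinant normalization and the simultaneous diagonalizability of a commuting semisimple family — is routine.
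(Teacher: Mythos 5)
Your proposal is correct and follows essentially the same route as the paper: both identify an orbit in the partially reducible stratum with the $2\times 2$ block carrying the conjugate eigenvalue pair, use Lemma \ref{-D2} to pin down the shape of all commuting matrices in the tuple, and invoke Lemma \ref{commutativity} to build the block-matrix inverse, with the Weyl group collapsing from $S_3$ to $S_2$ because the rational eigenline is canonical. Your framing via the primary decomposition $W \oplus V$ is a slightly more invariant way of saying what the paper does by fixing the diagonal representative with the base-field entry in the third slot, but the substance is identical.
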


\begin{proof}
	We want to prove that there is a bijection between the number of closed conjugation orbits in the partially reducible stratum of $\mathrm{SL}_3(\mathbb{F}_q)$ (orbits in $\mathscr{T}_{\overline{D}_{2}^r}(\mathrm{SL}_3(\mathbb{F}_q))$ and the irreducible diagonal $r$-tuples of $\mathrm{GL}_2(\overline{\mathbb{F}}_q)$ excluding the central tuples ($(\overline{\mathscr{D}}_2\cup Z)^r - Z^r$). 
	Let  $(A_1,\ldots,A_r) \in  \mathscr{T}_{\overline{D}_{2}^r}(\mathrm{SL}_3(\mathbb{F}_q))$. By definition of the stratum, for some $i$,  $\mathrm{char}(A_i)=p(x)(x-l_i)$ where $p(x)=(x^2-ax+\frac{1}{l_i})$ is irreducible over $\mathbb{F}_q$. 
	
	By Lemma \ref{-D2}, $(A_1, \ldots, A_r)$ is simultaneously diagonalizable to a tuple $D=(D_1,\ldots,D_r)\in \mathrm{SL}_3(\overline{\mathbb{F}}_q)^r$ of the following form up to simultaneous permutation, 
	$$\left(\begin{bmatrix}
		l_{1_1}& 0 & 0\\
		0 & l_{1_2} & 0\\
		0 & 0 & l_1
	\end{bmatrix}, \ldots ,\begin{bmatrix}
		l_{j_1}& 0 & 0\\
		0 & l_{j_2}& 0\\
		0 & 0 & l_{j}
	\end{bmatrix},\ldots ,\begin{bmatrix}
		l_{r_1}& 0 & 0\\
		0 & l_{r_2}& 0\\
		0 & 0 & l_{r}
	\end{bmatrix}  \right)$$ 
	where $l_j\in \mathbb{F}_q$ and $l_{j_1},l_{j_2} \in \overline{\mathbb{F}}_q- \mathbb{F}_q$ 
	or $l_{j_1}=l_{j_2}\in \mathbb{F}_q$ for $1\leq j \leq r$.
	Note here that we choose the tuple $(D_1,\ldots,D_r)$ with the basefield entry in the third row to represent the orbit of $(A_1,\ldots,A_r)$. For each $D_j$, the upper block $D'_j=\begin{pmatrix}
		l_{j_1} & 0 \\
		0 & l_{j_2}
	\end{pmatrix} \in \mathrm{GL}_2(\overline{\mathbb{F}}_q- \mathbb{F}_q) \cup Z(\mathrm{GL}_2(\mathbb{F}_q))$ which implies that $(D_1', \ldots, D_r') \in (\overline{\mathscr{D}}_2\cup Z)^r - Z^r$. The matrix $D'_j$ is unique up to permutation of entries for a fixed $D_j$. 
	This allows us to define a natural map between the two sets in consideration. 
	
	We first address the Weyl group action before defining the map.   Let $\mathscr{C}^r:=(\overline{\mathscr{D}}_2\cup Z)^r $  and $$\mathscr{W}_2 := \left\{ \begin{pmatrix}
		1&0\\0 &1
	\end{pmatrix},
	\begin{pmatrix}
		-1&0\\0 &-1
	\end{pmatrix},
	\begin{pmatrix}
		0&-1\\ 1 &0
	\end{pmatrix},
	\begin{pmatrix}
		0&1\\-1 &0
	\end{pmatrix}
	\right\}$$
	be the group in $\mathrm{SL}_2(\mathbb{F}_q)$ whose quotient is the Weyl group. 
	Note that the action of $\mathscr{W}_2$ on $D \in \mathrm{GL}_2(\overline{\mathbb{F}}_q)$ reduces to the simultaneous permutation of the rows of $D$. 
	 Let $G=\mathrm{SL}_3(\mathbb{F}_q)$ and $\mathscr{T}_{\overline{D}_{2}^r}/ G $ denote the orbits in the stratum. Now define: 
	\begin{eqnarray*}
		\Phi : 	\mathscr{T}_{\overline{D}_{2}^r}/ G & \longrightarrow &(\mathscr{C}^r- Z^r)/\mathscr{W}_2\\
		(D_1,\ldots,D_r) & \longmapsto & (D'_1,\ldots,D'_r)
	\end{eqnarray*}
	The map is clearly well defined. 
	Suppose $(D'_1,\ldots,D'_r)= (C'_1,\ldots,C'_r)$. Then, by definition $D'_j$ and $C'_j$ are simultaneous permutations of each other for all $j$ which implies that $(D_1,\ldots,D_r)$ and $(C_1,\ldots,C_r)$ are simultaneous permutations of each other. Therefore, the orbits of these elements are the same,  $[(D_1,\ldots,D_r)]=[(C_1,\ldots,C_r)]$ in $\mathscr{T}_{\overline{D}_{2}^r}$. Thus the map is injective. 
	\\To show surjectivity, let $(D'_1,\dots,D'_r)$ denote an element of $\mathscr{C}^r- Z(\mathrm{GL}_2(\mathbb{F}_q))^r$. Then, by Lemma  \ref{commutativity}, there exists a matrix $P'\in \mathrm{SL}_2(\overline{\mathbb{F}}_q)$ such that \\$(P'D'_1P^{'-1},\ldots,P'D'_rP^{'-1}) =(A'_1,\cdots,A'_r) \in \mathrm{GL}_2(\mathbb{F}_q)^r$. Now, consider the block matrices 
	
	$$(D_1,\ldots, D_r)= \left( \begin{pmatrix}
		D'_1 &0\\
		0 & \frac{1}{\det(D'_1)}
	\end{pmatrix}, \ldots,\begin{pmatrix}
		D'_r & 0\\
		0 & \frac{1}{\det(D'_r)}
	\end{pmatrix} \right).$$
	Clearly each $D_i \in \mathrm{SL}_3(\overline{\mathbb{F}}_q)$ by construction. Furthermore, define $P= \begin{pmatrix}
		P' &0\\
		0 &1 \end{pmatrix}$ and $A = \begin{pmatrix}
		A' &0\\
		0 & \frac{1}{\det(D'_r)} \end{pmatrix}$.
	Then $P\in \mathrm{GL}_3(\overline{\mathbb{F}}_q)$ and $\det(A)=1$ since $\det(A')=\det(D')$. It is easy to verify by block multiplication that $PD_i P^{-1}=A_i$. Therefore, $\phi([(A_1,\ldots,A_r)])=(D'_1,\ldots,D'_r)$ and the map is surjective. 
\end{proof}
\subsection{The case of $\mathrm{SL}_2$-matrices}

Let $\mathrm{char}(A)$ denote the characteristic polynomial of a matrix $A$. 
\begin{remark}\label{classify}
	The set $\mathscr{D}(\mathrm{SL}_2(\overline{\mathbb{F}}_q))^r$  can be classified as follows:    
	\begin{eqnarray*}		
		\text{ Reducible tuples: } 	\mathscr{D}_1^r(\mathrm{SL}_2(\mathbb{F}_q)) & := & \{(D_1,\ldots ,D_r) \ | \   \text{ there exists  } i \text{ such that } \\
		& & D_i \text { is not scalar, for } 1 \leq i\leq r \}\\
		\hspace{-1cm}\text{Central tuples: } \mathscr{D}_2^r(\mathrm{SL}_2(\mathbb{F}_q)) &:= & \{(D_1,\ldots ,D_r) \ | \  D_i \text { is scalar}  \text{ for } 1 \leq i\leq r \}	\\
		\text{ Irreducible tuples: }	\overline{\mathscr{D}}_2^r(\mathrm{SL}_2(\mathbb{F}_q)) & := & \{ (D_1,\ldots ,D_r) \ | \ \text{ there exists } i  \in \{1,\ldots, r\} \\  & & \text{ such that }  \mathrm{char}(D_i) \text{ is irreducible over } \mathbb{F}_q \}.
	\end{eqnarray*}
\end{remark}

We stratify the space $\mathrm{SL}_2(\mathbb{F}_q)^r$ as follows. 
\begin{definition}\label{n2stratum}
	A tuple $(A_1,\ldots,A_r)$ is said to be in $X$ stratum if there exists $P  \in \mathrm{GL}_2(\overline{\mathbb{F}}_q)$ such that $(PA_1P^{-1},\ldots,PA_rP^{-1})$ is an X tuple. Replace X with Reducible, Central and Irreducible to obtain the definition for the corresponding stratum. 
\end{definition}
In this subsection, we drop  $\mathrm{SL}_2(\mathbb{F}_q)$ from the subscript for convenience. The number of  orbits in a stratum $X$ under the conjugation action by $G$ is denoted by $\mathscr{T}_X/ G$. Since we are only considering the strata of diagonal tuples, each of these orbits are closed and hence constitute a point in the character variety. 

We use the following counts from [Theorem B, \cite{SCEpoly}] by Cavazos and Lawton. 
\begin{proposition}\label{thmSL2}
	Let $G=\mathrm{SL}_2(\mathbb{F}_q)$. The number of closed conjugation orbits in each stratum is given as follows: 
	\begin{enumerate}
		\item[1. ]	$	|\mathscr{T}_{\mathscr{D}_1^r} /G|   =   \begin{cases}
			2^r \text{ when } p \text{ is odd} \\
			1 \text{ when } p \text{ is even }
		\end{cases}$
		\item[2. ] $ |\mathscr{T}_{\mathscr{D}_2^r}/ G|  =  \begin{cases}
			\frac{(q-1)^r-2^r}{2} \text{ when } p \text{ is odd} \\
			\frac{(q-1)^r-1}{2} \text{ when } p \text{ is even }
		\end{cases}$ 
		\item[3. ] $	|\mathscr{T}_{\overline{\mathscr{D}}_2^r}/G|  =  \begin{cases}
			\frac{(q+1)^r-2^r}{2} \text{ when } p \text{ is odd} \\
			\frac{(q+1)^r-1}{2} \text{ when } p \text{ is even }.
		\end{cases}$
	\end{enumerate}
\end{proposition}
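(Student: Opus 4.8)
The plan is to reduce every count to an orbit computation for the Weyl group acting on diagonal tuples, and then to evaluate each orbit count by Burnside's lemma. Since each stratum consists of simultaneously diagonalizable (hence polystable) tuples, the discussion preceding Remark \ref{classify} identifies the closed conjugation orbits in a stratum with the diagonal tuples of the corresponding type modulo the simultaneous action of $\mathscr{W}_2$. Passing to the quotient by the common central factor $\{\pm I\}$, this action is just the involution $\sigma$ that inverts all entries at once: writing a diagonal matrix in $\mathrm{SL}_2(\overline{\mathbb{F}}_q)$ as $\mathrm{diag}(a,a^{-1})$ and encoding a tuple by $(a_1,\dots,a_r)$, we have $\sigma(a_1,\dots,a_r)=(a_1^{-1},\dots,a_r^{-1})$. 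So for each stratum I will first pin down the set $C$ in which the entries $a_i$ range, and then count $\mathbb{Z}/2$-orbits on $C^r$.

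First I would identify the two relevant tori. For the reducible and central strata every $D_i$ is diagonalizable over $\mathbb{F}_q$, so each $a_i$ lies in the split torus $\mathbb{F}_q^\times$, a cyclic group of order $q-1$. For the irreducible stratum some $D_i$ has an irreducible characteristic polynomial; by Lemma \ref{irred} every entry of every commuting partner is then either central or again has an irreducible characteristic polynomial, and by Lemma \ref{|G_A|} the centralizer of such an element in $\mathrm{SL}_2(\mathbb{F}_q)$ has order $q+1$. Hence the whole tuple lives in a single non-split (norm-one) torus, a cyclic group of order $q+1$, and Lemma \ref{commutativity} guarantees that every value in this torus is actually realized, so the parametrization is onto. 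Thus the entries range over a cyclic group $C$ of order $q-1$ in the split case and $q+1$ in the non-split case.

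Next I would run Burnside's lemma for the involution $\sigma$ on $C^r$. The number of orbits is $\tfrac12\bigl(|C|^r+|\mathrm{Fix}(\sigma)|\bigr)$, and $\mathrm{Fix}(\sigma)=\{(a_i):a_i^2=1\}$ has size $\gcd(2,|C|)^r$. When $p$ is odd both $q-1$ and $q+1$ are even, so this is $2^r$; when $p=2$ both are odd, so it is $1$. The central tuples are exactly the fixed tuples (all $a_i=\pm1$), each forming its own orbit, which gives the count $2^r$ (resp.\ $1$). Subtracting these from the split-torus total yields the reducible count $\tfrac{(q-1)^r-2^r}{2}$ (resp.\ $\tfrac{(q-1)^r-1}{2}$), and subtracting them from the non-split-torus total yields the irreducible count $\tfrac{(q+1)^r-2^r}{2}$ (resp.\ $\tfrac{(q+1)^r-1}{2}$), which are precisely the three displayed formulas.

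The main obstacle is the second step, the clean separation into a single split or non-split torus: one must verify that a commuting tuple cannot mix eigenvalue behaviour, i.e.\ that once one coordinate is irreducible all coordinates are confined to the same non-split torus, and conversely that every such torus value genuinely occurs as a commuting matrix over $\mathbb{F}_q$. This is exactly where Lemmas \ref{irred}, \ref{|G_A|} and \ref{commutativity} are needed, together with the bookkeeping distinguishing whether $\pm1$ are distinct, which is what produces the uniform $\gcd(2,q\mp1)$ dichotomy between the odd and even characteristic cases.
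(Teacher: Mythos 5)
Your argument is correct. Note, however, that the paper does not actually prove this proposition: it is imported verbatim from Cavazos--Lawton (Theorem B of \cite{SCEpoly}), so there is no in-paper proof to match against. What you supply is a self-contained derivation, and it runs parallel to the method the paper itself uses for the $\mathrm{SL}_3$ case in Theorem \ref{n3}: parametrize closed orbits by simultaneously diagonal tuples, sort the entries into the split torus $\mathbb{F}_q^\times$ (order $q-1$) or the norm-one non-split torus (order $q+1$) with the central tuples as the overlap, and quotient by the Weyl group. Your two refinements are worth keeping. First, you make the non-mixing step explicit --- that a commuting tuple containing one matrix with irreducible characteristic polynomial is confined entirely to a single non-split torus (Lemma \ref{irred} for the containment, Lemma \ref{commutativity} together with the abelian-ness of the centralizer for surjectivity); the paper's $\mathrm{SL}_3$ proof relies on the same lemmas but leaves more of this implicit. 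Second, using Burnside's lemma for the involution $a\mapsto a^{-1}$ packages the fixed-point bookkeeping as $\gcd(2,|C|)^r$, which cleanly explains the uniform dichotomy between $p$ odd and $p=2$ rather than handling the two characteristics as separate cases. One cosmetic caveat: the labels in the proposition are swapped relative to Remark \ref{classify} ($\mathscr{D}_1^r$ is defined there as the reducible tuples but carries the central count $2^r$ in the statement); your counts attach the right formula to the right geometric stratum, which is what matters for Corollary \ref{corSL2E}.
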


Recall from Theorem \ref{ThmKatz} [Section \ref{Epolynomial}] that the number of $\mathbb{F}_q$-points of the variety gives the $E$-polynomial. 
\begin{corollary}\label{corSL2E}
	The $E$-polynomial or Hodge-Deligne polynomial for the $\mathrm{SL}_2(\mathbb{C})$ character variety of the free abelian group, $\mathbb{Z}^r$, of rank $r$, denoted by $\mathfrak{X}_{\mathbb{Z}^r}(\mathrm{SL}_2(\mathbb{C}))$ is $\frac{(q+1)^r}{2}+\frac{(q-1)^r}{2}$. 
\end{corollary}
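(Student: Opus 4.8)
The plan is to obtain the count of $\mathbb{F}_q$-points of $\mathfrak{X}_{\mathbb{Z}^r}(\mathrm{SL}_2(\mathbb{F}_q))$ as a sum over the three strata and then invoke Katz's theorem. Since the closed conjugation orbits in $\mathrm{Hom}(\mathbb{Z}^r,\mathrm{SL}_2(\mathbb{F}_q))$ are exactly the $\mathbb{F}_q$-points of the character variety, and every such orbit is represented by a diagonal tuple of one of the three types of Remark \ref{classify} (Reducible, Central, Irreducible), I would first argue that these three strata partition the set of closed orbits. Exhaustiveness is immediate from the classification of diagonal tuples in $\mathscr{D}(\mathrm{SL}_2(\overline{\mathbb{F}}_q))^r$, and disjointness follows because a commuting tuple containing a matrix with irreducible characteristic polynomial cannot contain a non-central matrix that splits over $\mathbb{F}_q$ (by Lemma \ref{irred}); hence the Irreducible stratum is disjoint from the union of the Reducible and Central strata, while the latter two are disjoint by definition.

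Given the partition, the total count is the sum of the three strata counts recorded in Proposition \ref{thmSL2}, and I would carry out this sum in each parity case. When $p$ is odd,
\[
2^r + \frac{(q-1)^r - 2^r}{2} + \frac{(q+1)^r - 2^r}{2} = \frac{(q+1)^r + (q-1)^r}{2},
\]
and when $p$ is even,
\[
1 + \frac{(q-1)^r - 1}{2} + \frac{(q+1)^r - 1}{2} = \frac{(q+1)^r + (q-1)^r}{2}.
\]
The key observation is that the parity-dependent terms ($2^r$, respectively $1$) cancel in both cases, so the count is given by the single expression $P(q) = \tfrac{1}{2}\bigl((q+1)^r + (q-1)^r\bigr)$ for every $q$.

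Next I would verify that $P$ is an honest polynomial with integer coefficients, so that the polynomial-count hypothesis of Theorem \ref{ThmKatz} is met. Expanding by the binomial theorem, the odd-degree terms of $(q+1)^r+(q-1)^r$ cancel and the even-degree terms double, giving $P(q) = \sum_{k \equiv r \,(2)} \binom{r}{k} q^k \in \mathbb{Z}[q]$. Since $\mathfrak{X}_{\mathbb{Z}^r}(\mathrm{SL}_2(\mathbb{C}))$ is defined over $\mathbb{Z}$ and its $\mathbb{F}_q$-point count agrees with $P(q)$ for all $q$, it has polynomial count with count polynomial $P$.

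Finally, Theorem \ref{ThmKatz} yields $E(\mathfrak{X}_{\mathbb{Z}^r}(\mathrm{SL}_2(\mathbb{C})); x,y) = P(xy) = \tfrac{1}{2}\bigl((xy+1)^r + (xy-1)^r\bigr)$, which is the asserted formula $\tfrac{(q+1)^r}{2} + \tfrac{(q-1)^r}{2}$ after the substitution $q = xy$. I do not expect a genuine obstacle, since the substantive content is already contained in Proposition \ref{thmSL2}; the remaining work is the cancellation that collapses the two piecewise formulas into a single polynomial, which is precisely the hypothesis Katz's theorem requires. The only point demanding real care is confirming that the strata genuinely partition the $\mathbb{F}_q$-points, so that no closed orbit is double-counted or omitted, and this is where I would concentrate attention.
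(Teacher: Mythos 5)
Your proposal is correct and follows essentially the same route as the paper: sum the three stratum counts from Proposition \ref{thmSL2}, observe that the parity-dependent terms cancel to give the single polynomial $\tfrac{1}{2}\bigl((q+1)^r+(q-1)^r\bigr)$, and apply Theorem \ref{ThmKatz}. The additional care you take in verifying that the strata partition the closed orbits and that the count polynomial lies in $\mathbb{Z}[q]$ is left implicit in the paper but is consistent with its argument.
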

\begin{proof}
	Adding the counts from Proposition \ref{thmSL2} obtain the total number of conjugation orbits and thus the  $E$-polynomial of $\mathrm{SL}_2(\mathbb{C})$-character variety. When $p$ is odd,
	\begin{eqnarray*}
		\frac{(q-1)^r-2^r}{2}+ \frac{(q+1)^r-2^r}{2}+2^r=\frac{(q+1)^r}{2}+\frac{(q-1)^r}{2}.
	\end{eqnarray*}
	The case when $p$ is even yields the same result. 
\end{proof}

\subsection{The case of $\mathrm{SL}_3$-matrices}

Let $\mathscr{D}_1^r(\mathrm{SL}_3(\mathbb{F}_q)), \mathscr{D}_2^r(\mathrm{SL}_3(\mathbb{F}_q)),\mathscr{D}_3^r(\mathrm{SL}_3(\mathbb{F}_q))$, \\ $\overline{\mathscr{D}}_2^r(\mathrm{SL}_3(\mathbb{F}_q))$ and $\overline{\mathscr{D}}_3^r(\mathrm{SL}_3(\mathbb{F}_q))$ be the different types of tuples as defined in Definition \ref{Strata3}. Recall that for each of the above tuple $X$,  $\mathscr{T}_X$ denotes the corresponding strata. In this subsection, we omit  $\mathrm{SL}_3(\mathbb{F}_q)$ from the bracket for convenience. As in the case of $n=2$, let $\mathscr{T}_X/ G$ denotes the number of orbits in a stratum $X$ under the conjugation action by $G$. 
\begin{theorem} \label{n3}
	When $G=\mathrm{SL}_3(\mathbb{F}_q))$, the number of closed conjugation orbits in each stratum is as follows: 	
	\begin{enumerate}
		\item[1.]  Central Stratum $(\mathscr{T}_{\mathscr{D}_1^r}/ G)$
		$$ \begin{cases}
			3^r & \text{ if } p\equiv 1 \bmod 3  \text{ or }  p\equiv -1 \bmod 3 \text{ and } k \text { is even }\\
			1& \text { if } p\equiv 0 \bmod 3   \text { or }  p\equiv -1 \bmod 3 \text { and } k  \text{ is odd }.
		\end{cases}$$
		\item[2.] Repeating Stratum $(\mathscr{T}_{\mathscr{D}_2^r}/ G)$
		$$ \begin{cases}
			\left(q-1\right)^r-3^r	& \text{ if } p\equiv 1 \bmod 3  \text{ or }  p\equiv -1 \bmod 3 \text{ and } k \text { is even }\\
			\left(q-1\right)^r-1	&  \text { if } p\equiv 0 \bmod 3   \text { or }  p\equiv -1 \bmod 3 \text { and } k  \text{ is odd }.
		\end{cases}$$
		\item[3.] Reducible Stratum $(\mathscr{T}_{\mathscr{D}_3^r}/G)$
		$$ \begin{cases}
			\frac{\left(q-1\right)^{2r}}{6}\:-\frac{\left(q-1\right)^{r}}{2}+3^{r-1} & \text{ if } p\equiv 1 \bmod 3  \text{ or }  p\equiv -1 \bmod 3 \text{ and } k \text { is even }\\
			\frac{\left(q-1\right)^{2r}}{6}\:-\frac{\left(q-1\right)^r}{2}+\frac{1}{3} &  \text { if } p\equiv 0 \bmod 3   \text { or }  p\equiv -1 \bmod 3 \text { and } k  \text{ is odd }.
		\end{cases}$$
		
		\item[4.]  Irreducible Stratum $(\mathscr{T}_{\overline{\mathscr{D}}_3^r}/ G)$
		$$ \begin{cases}
			\frac{\left(q^2+q+1\right)^r}{3}-3^{r-1}	& \text{ if } p\equiv 1 \bmod 3  \text{ or }  p\equiv -1 \bmod 3 \text{ and } k \text { is even }\\
			\frac{\left(q^2+q+1\right)^r}{3}-\frac{1}{3}	&  \text { if } p\equiv 0 \bmod 3   \text { or }  p\equiv -1 \bmod 3 \text { and } k  \text{ is odd }.
		\end{cases}$$
		\item[5.]  Partially Reducible Stratum $(\mathscr{T}_{\overline{\mathscr{D}}_2^r}/ G)$
		$$ \begin{cases}
			\frac{\left(q^2-1\right)^r}{2}-\frac{\left(q-1\right)^r}{2} & \text{ if } p\equiv 1 \bmod 3  \text{ or }  p\equiv -1 \bmod 3 \text{ and } k \text { is even }\\
			\frac{\left(q^2-1\right)^r}{2}-\frac{\left(q-1\right)^r}{2}	&  \text { if } p\equiv 0 \bmod 3   \text { or }  p\equiv -1 \bmod 3 \text { and } k  \text{ is odd }.
		\end{cases}$$
	\end{enumerate}
\end{theorem}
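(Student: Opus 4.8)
The plan is to count closed orbits stratum-by-stratum, first counting the relevant diagonal tuples over $\overline{\mathbb F}_q$ and then dividing by the size of the Weyl-group orbit, which Lemma~\ref{Rmk2} pins down for the base-field strata and Lemmas~\ref{lem3.2.26} and~\ref{PRS} pin down for the extension strata. Throughout I write $N:=\gcd(3,q-1)$ for the number of cube roots of unity in $\mathbb F_q^\times$; the case split in the statement is exactly the split $N=3$ versus $N=1$, since $q\equiv 1\bmod 3$ precisely when $p\equiv 1\bmod 3$, or $p\equiv -1\bmod 3$ with $k$ even.

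For the three base-field strata I would encode a diagonal tuple $(D_1,\dots,D_r)$ with entries in $\mathbb F_q$ by its triple of column vectors $(v_1,v_2,v_3)\in((\mathbb F_q^\times)^r)^3$, where $v_j$ records the $j$-th diagonal entry of each $D_i$; the determinant-one condition is the single relation $v_1v_2v_3=\mathbf 1$ (componentwise), so a tuple is the free choice of $(v_1,v_2)$ and there are $(q-1)^{2r}$ of them. Reinterpreting Lemma~\ref{Rmk2}, the Weyl orbit has size $1$, $3$, or $6$ according as the $v_j$ are all equal, exactly two equal, or pairwise distinct. The \textbf{central} tuples are $v_1=v_2=v_3$ with $v_1^3=\mathbf 1$, giving $N^r$ fixed points, hence $N^r$ orbits. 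For the \textbf{repeating} stratum I count the ``exactly two equal'' tuples: choosing which pair coincides ($3$ ways) and the common value $v$ with $v^3\neq\mathbf 1$ gives $3\big((q-1)^r-N^r\big)$ tuples, which fall into Weyl orbits of size $3$, yielding $(q-1)^r-N^r$ orbits. The \textbf{reducible} stratum is what remains: subtracting the previous two counts from $(q-1)^{2r}$ and dividing the free (orbit-$6$) remainder by $6$ gives $\frac{(q-1)^{2r}-3(q-1)^r+2N^r}{6}$, which is the claimed expression once $N$ is substituted. The counts of Proposition~\ref{thm3.4.3} give an equivalent bookkeeping if one prefers to organize by characteristic polynomial.

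For the \textbf{irreducible} stratum I would use that a matrix $A\in\mathrm{SL}_3(\mathbb F_q)$ with irreducible characteristic polynomial has centralizer $G_A\cap\mathrm{SL}_3(\mathbb F_q)$ cyclic of order $q^2+q+1$ (Lemma~\ref{|G_A|}), and that every matrix commuting with such an $A$ is central or again irreducible (Lemma~\ref{irred}), with every admissible irreducible characteristic polynomial actually occurring (Lemma~\ref{commutativity}). Thus every tuple in this stratum lies in one such Coxeter torus $C\cong\mathbb Z/(q^2+q+1)$, and the number of $r$-tuples in $C$ that are not all central is $(q^2+q+1)^r-N^r$ (the all-central tuples being the $r$-tuples of scalar cube roots of unity). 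Lemma~\ref{lem3.2.26} shows each resulting orbit is hit by exactly three of these tuples, so dividing by $3$ gives $\frac{(q^2+q+1)^r}{3}-\frac{N^r}{3}$, i.e. $\frac{(q^2+q+1)^r}{3}-3^{r-1}$ or $\frac{(q^2+q+1)^r}{3}-\frac13$.

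For the \textbf{partially reducible} stratum I would invoke the bijection of Lemma~\ref{PRS} with commuting tuples in $(\overline{\mathscr D}_2\cup Z)^r-Z^r$ modulo $\mathscr W_2$. Here $\overline{\mathscr D}_2\cup Z$ is the non-split torus of $\mathrm{GL}_2$, whose $\mathbb F_q$-points form $\mathbb F_{q^2}^\times$ of order $q^2-1$, while the all-central tuples form $Z^r$ of order $(q-1)^r$; dividing the $(q^2-1)^r-(q-1)^r$ non-central tuples by the order-$2$ action of $\mathscr W_2$ (free on those tuples) gives $\frac{(q^2-1)^r}{2}-\frac{(q-1)^r}{2}$, independent of $q\bmod 3$ as claimed. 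The main obstacle throughout is getting the Weyl-orbit sizes exactly right, in particular the interaction between the Galois (Frobenius) action and the Weyl group in the two extension strata: it is precisely the fact that the relative Weyl group of the Coxeter torus has order $3$ (not $6$) and of the $\mathrm{GL}_2$-torus has order $2$ that produces the divisors $3$ and $2$, and this is exactly the content distilled in Lemmas~\ref{lem3.2.26} and~\ref{PRS}. A final consistency check is that the five counts sum to $\frac{(q-1)^{2r}}{6}+\frac{(q^2-1)^r}{2}+\frac{(q^2+q+1)^r}{3}$, the cycle-index-of-$S_3$ shape expected for the $E$-polynomial.
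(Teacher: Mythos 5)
Your proposal is correct and follows essentially the same route as the paper: the same five strata, the same Weyl-orbit sizes ($1$, $3$, $6$ for the base-field tuples, $3$ for the irreducible stratum via Lemmas \ref{|G_A|}--\ref{lem3.2.26}, and $2$ for the partially reducible stratum via Lemma \ref{PRS}), and the same subtraction-and-division bookkeeping. The only difference is cosmetic --- you organize the base-field count by the three column vectors subject to $v_1v_2v_3=\mathbf{1}$ rather than by characteristic polynomials as in Proposition \ref{thm3.4.3}, which you yourself note is equivalent.
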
 
\begin{proof}
	Below, we give the proof for the case when $p \equiv 1 \bmod 3$ or $p\equiv -1 \bmod 3$ and $k$ is even. 
	\begin{enumerate}
		\item[1.] \textit{Central Stratum:  }By Proposition \ref{thm3.4.3}, number of central elements is $3$ (or $1$). Therefore, the result follows. 
		\item[2.] \textit{Repeating Stratum: }Let $(D_1,\ldots,D_r)$ represent a diagonal tuple in this stratum. Then, $D_i$ has exactly two repeated eigenvalues at fixed positions for all $1\leq i \leq r$ or $D_i$ is central. 
		From Proposition \ref{thm3.4.3}, there are $(q-4)$ (or $(q-2))$ elements with exactly two repeated eigenvalues. Along with the central elements, there are  $(q-1)$ possibilities for each $D_i$ and $(q-1)^r$ for the total number of such tuples. Finally, we subtract the central tuples which gives the desired count. Note that this is the number of such tuples up to Weyl group action. Since each such tuple can have $3$ distinct permutations by Lemma \ref{Rmk2}, the total number of tuples is $3((q-1)^r-3^r)$ (or $3((q-1)^r-1^r))$. 
		\item[3.] \textit{Reducible Stratum}: In this stratum, we count the number of diagonal tuples, $(D_1,\ldots,D_r)$ with entries from $\mathbb{F}_q$, excluding those included in the two strata mentioned above. This count is obtained by subtracting the counts of the previous two strata from the total count of diagonal matrices in $\mathrm{SL}_3(\mathbb{F}_q)$. 
		The number of diagonal matrices in  $\mathrm{SL}_3(\mathbb{F}_q)$, $|\mathscr{D}(\mathrm{SL}_3(\mathbb{F}_q))|= (q-1)^2$. Therefore, there are a total of $(q-1)^{2r}$ diagonal tuples in $\mathrm{SL}_3(\mathbb{F}_q)^r$. Subtracting the repeating and central strata and dividing by $6$ to account for the Weyl group action [Lemma \ref{Rmk2}], we get the following 
		\begin{eqnarray*}
			\frac{(q-1)^{2r}-3((q-1)^r-3^r)-3^r}{6} & = &\frac{(q-1)^{2r}}{6}-\frac{(q-1)^r}{2}+3^{r-1}.
		\end{eqnarray*}
		 Now we add the counts from previous two parts to obtain the total count for the basefield stratum, that is the number of orbits that are completely in the base field.
		\begin{eqnarray*}
			3^r+(q-1)^r-3^r+\frac{(q-1)^{2r}}{6}-\frac{(q-1)^r}{2}+3^{r-1} & = & \frac{(q-1)^{2r}}{6}+\frac{(q-1)^r}{2}+3^{r-1}.
		\end{eqnarray*}
		Replacing $3^r$ with $1^r$ obtains the count for the other case.
		\item[4.] \textit{ Irreducible stratum:}
		Let $(A_1,\ldots,A_r)$ be a tuple in the irreducible stratum. Recall that for any $A_i$, a commuting element $A_j\in \mathrm{SL}_3(\mathbb{F}_q)$ is an element of the stabilizer, $G_{A_i}\cap \mathrm{SL}_3(\mathbb{F}_q)$ under the conjugation action as explained in the proof of Lemma \ref{lem3.2.26}. By Lemma \ref{|G_A|}, $|G_{A_i}\cap \mathrm{SL}_3(\mathbb{F}_q)|=q^2+q+1$. Therefore, there are $q^2+q+1$ choices for each $A_i$. Additionally, from Lemma \ref{lem3.2.26}, we know that the number of distinct permutations possible for such a tuple is three. Since the stabilizer includes the elements from the central stratum, we can obtain the orbits in the irreducible stratum by discounting the central stratum: 
		\begin{eqnarray*}
			\frac{(q^2+q+1)^r-3^r}{3} &=& 	\frac{(q^2+q+1)^r}{3}-3^{r-1}.
		\end{eqnarray*}
		\item[5.] \textit{Partially reducible stratum }: 
		Recall that  $\overline{\mathscr{D}}(\mathrm{GL}_2(\overline{\mathbb{F}}_q -\mathbb{F}_q))$ denote the set of invertible $2 \times 2$ diagonal matrices with entries from $\overline{\mathbb{F}}_q-\mathbb{F}_q$.
		From Lemma \ref{PRS}, it suffices to compute $|\big(\overline{\mathscr{D}}(\mathrm{GL}_2(\overline{\mathbb{F}}_q -\mathbb{F}_q))\cup Z(\mathrm{GL}_2(\mathbb{F}_q))^r\big) - Z(\mathrm{GL}_2(\mathbb{F}_q))^r|$ up to the action of the Weyl group described in the proof of Lemma \ref{PRS}.
		For $D_i \in \overline{\mathscr{D}}(\mathrm{GL}_2(\overline{\mathbb{F}}_q- \mathbb{F}_q))$,  $\mathrm{char}(D_i)$ is an irreducible degree two polynomial. By Lemma \ref{lem3}, there are $\frac{q^2-q}{2}$ such polynomials and $|Z(\mathrm{GL}_2(\mathbb{F}_q))|=q-1$. Since there are two distinct diagonal matrices for each such polynomial, the total number of matrices in $\overline{\mathscr{D}}(\mathrm{GL}_2(\overline{\mathbb{F}}_q-\mathbb{F}_q))$ is $q^2-q$ and
		$$ |\overline{\mathscr{D}}(\mathrm{GL}_2(\overline{\mathbb{F}}_q-\mathbb{F}_q))\cup Z(\mathrm{GL}_2(\mathbb{F}_q))|  = (q^2-q+q-1)=q^2-1.$$
Thus, we obtain the number of distinct orbits in
		$\big(\overline{\mathscr{D}}(\mathrm{GL}_2(\overline{\mathbb{F}}_q- \mathbb{F}_q))\cup Z(\mathrm{GL}_2(\mathbb{F}_q))^r\big) - Z(\mathrm{GL}_2(\mathbb{F}_q))^r$ as $$\frac{(q^2-1)^{r}}{2}-\frac{(q-1)^r}{2}$$ after dividing by two to account for the Weyl group action.
	\end{enumerate}
\end{proof}
\begin{corollary}\label{3.2.29}\label{En3}
	The $E$-polynomial of the $\mathrm{SL}_3(\mathbb{C})$-character variety of $\mathbb{Z}^r$ is $$\frac{\left(q-1\right)^{2r}}{6}+\frac{\left(q^2-1\right)^r}{2}+\frac{\left(q^2+q+1\right)^r}{3}.$$
\end{corollary}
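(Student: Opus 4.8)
The plan is to recover the $E$-polynomial of $\mathfrak{X}_{\mathbb{Z}^r}(\mathrm{SL}_3(\mathbb{C}))$ by invoking Katz's theorem (Theorem \ref{ThmKatz}): since the total number of $\mathbb{F}_q$-points of the character variety is the sum of the orbit counts over all five strata computed in Theorem \ref{n3}, I would simply add those five counts together and verify that the resulting quantity is a polynomial in $q$ (independent of the congruence class of $p$ mod $3$ and the parity of $k$), whence it equals the count polynomial $P_X(q)$ and therefore the $E$-polynomial evaluated with $xy \mapsto q$.

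Concretely, I would write the total as the sum of the central, repeating, reducible, irreducible, and partially reducible counts from Theorem \ref{n3}. The key observation is that the excerpt has already bundled the central, repeating, and reducible strata: in the proof of part~3 of Theorem \ref{n3} the three base-field strata are summed to give $\frac{(q-1)^{2r}}{6}+\frac{(q-1)^r}{2}+3^{r-1}$. Adding the irreducible count $\frac{(q^2+q+1)^r}{3}-3^{r-1}$ cancels the $3^{r-1}$ term exactly, and adding the partially reducible count $\frac{(q^2-1)^r}{2}-\frac{(q-1)^r}{2}$ cancels the $\frac{(q-1)^r}{2}$ term exactly. The surviving terms are precisely
\[
\frac{(q-1)^{2r}}{6}+\frac{(q^2+q+1)^r}{3}+\frac{(q^2-1)^r}{2},
\]
which matches the claimed formula.

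The main thing to check carefully is that this cancellation is genuinely independent of the arithmetic case distinction. In the case $p\equiv 0\bmod 3$ (or $p\equiv -1\bmod 3$ with $k$ odd), the base-field sum carries a $\tfrac13$ in place of $3^{r-1}$ and the irreducible count carries $-\tfrac13$ in place of $-3^{r-1}$; these still cancel, and the partially reducible count is identical in both cases (its formula does not depend on the case split), so the $\frac{(q-1)^r}{2}$ cancellation persists. Thus the total is the same polynomial in every case, confirming polynomial-count behavior uniformly for all but finitely many $q$.

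The only genuine obstacle is bookkeeping rather than mathematics: one must be sure that the "base-field stratum" total quoted in the proof of Theorem \ref{n3} part~3 indeed aggregates \emph{all three} of the central, repeating, and reducible strata (so that no base-field contribution is double-counted or omitted), and that no extra $r$-dependent constant survives. Once that is confirmed, the proof is a one-line summation: by Theorem \ref{ThmKatz}, since the point count $P_X(q)$ is the displayed polynomial for all sufficiently large $q$, the $E$-polynomial of $\mathfrak{X}_{\mathbb{Z}^r}(\mathrm{SL}_3(\mathbb{C}))$ is obtained by the substitution $q\mapsto xy$, and evaluating at $q$ gives exactly the stated expression.
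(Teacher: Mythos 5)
Your proposal is correct and follows essentially the same route as the paper: the paper likewise sums the five stratum counts from Theorem \ref{n3}, observes that the $3^{r-1}$ (resp. $\tfrac13$) and $\tfrac{(q-1)^r}{2}$ terms cancel, checks that both congruence cases yield the same polynomial, and then applies Theorem \ref{ThmKatz}. Your verification that the cancellation is uniform across the case split is exactly the check the paper performs implicitly by stating that the second case ``yields the same result.''
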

\begin{proof}
By Theorem \ref{ThmKatz}, the polynomial representing the number of points of the variety over $\mathbb{F}_q$ gives the $E$-polynomial of the variety. 
	We obtain the $E$-polynomial by adding the number of all the closed orbits over $\mathbb{F}_q$ from Theorem \ref{n3}. When $ p\equiv 1 \mod 3  \text{ or }  p\equiv -1 \mod 3 \text{ and } k \text { is even }$, we get
	$$	3^r+\left(q-1\right)^r-3^r+\frac{\left(q-1\right)^{2r}}{6}-\frac{\left(q-1\right)^r}{2}+3^{r-1}+\frac{\left(q^2-1\right)^r}{2}-\frac{\left(q-1\right)^r}{2}+\frac{\left(q^2+q+1\right)^r}{3}-3^{r-1}$$
	$$=\frac{\left(q-1\right)^{2r}}{6}+\frac{\left(q^2-1\right)^r}{2}+\frac{\left(q^2+q+1\right)^r}{3}.$$
	Similarly, adding the number of orbits  for the case when $ p\equiv 0 \mod 3   \text { or }  p\equiv -1 \mod 3 \text { and } k  \text{ is odd}$ yields the same result.  
\end{proof}
This agrees with the computation of E-polynomial, $e(M_1)$ in \cite{LawtonEpoly}. 
\section{ Asymptotic Transitivity}
 Character varieties of surface groups over fields of characteristic zero possess a well-defined geometric invariant measure. In this context, the notion of ergodicity describes how effectively a group action mixes the points within the variety. Specifically, the action of a group $G$ on a variety $X$ is said to be \textit{ergodic} if, for any $G$-invariant subset $A\subset X$, either $A$ or $X-A$ has measure zero. 
However, since there is no well-defined notion of a ``nice" invariant measure on the space of finite field points of the character varieties we study, we introduce the concept of asymptotic transitivity to capture the extent of ``mixing" under a group action.
\begin{definition}\label{AT}
	Let $G$ be a group and $X$ a variety defined over $\mathbb{Z}$. Suppose $G$ acts rationally on $X$. Now consider the action of $G$ on the $\mathbb{F}_q$-points of the variety denoted by $X(\mathbb{F}_q)$. \label{XFq} 
	We define the \textit{asymptotic ratio} as 
	\[  \lim_{q \to \infty}  \frac{\max\limits_{v\in X(\mathbb{F}_q)}\Big\lvert\mathrm{Orb}(v)\Big\rvert }{|X(\mathbb{F}_q)|}\] 
	whenever the limit exists. 
	\\We say that the action is \textit{asymptotically transitive} if the asymptotic ratio is one. 
\end{definition}
\begin{remark}
	\begin{enumerate}
		\item Since $q=p^n$, $q\to \infty$ in multiple ways, $p \to \infty$ or $n\to \infty$ or $p^n \to \infty$. We require that the limit exists regardless of how $q\to \infty$. 
		\item The ideal situation is when $|\mathrm{Orb}(v)|$ and $|X(\mathbb{F}_q)|$ are polynomials or quasi-polynomials in $q$. 
		\item To find the asymptotic ratio, we only need the formulas for $|\mathrm{Orb}(v)|$ and $|X(\mathbb{F}_q)|$ to be defined for all but finitely many values of $p$. 
	\end{enumerate}
\end{remark}
	Note that transitivity implies asymptotic transitivity, but the converse is not necessarily true. We are interested in the cases where the converse does not hold, that is, when the action is asymptotically transitive but not transitive. We now present a class of examples where the action is asymptotically transitive.
	\begin{lemma}
		Let $G$ be a group acting rationally on a variety $X$ defined over $\mathbb{Z}$.  Suppose $W(\mathbb{F}_q)$ is a subset of $X(\mathbb{F}_q)$ such that the action is transitive on $W(\mathbb{F}_q)$ for all $q$. For each $q$, let $\{V_i(\mathbb{F}_q)\}_{i=1}^m$ be a finite non-empty collection of disjoint subsets such that $W(\mathbb{F}_q) \not \subseteq V_i(\mathbb{F}_q)$  and $G\cdot V_i(\mathbb{F}_q)\subseteq V_i(\mathbb{F}_q)$ for $1\leq i\leq m$. 
		 If $W(\mathbb{F}_q)$ and $V_i(\mathbb{F}_q)$ have polynomial (monic) growth as $q$ increases such that the order of growth of $V_i({\mathbb{F}_q})$ is less than that of $W({\mathbb{F}_q})$ as $q\to \infty$ for $1\leq i \leq m$, then the action is asymptotically transitive on $Y(\mathbb{F}_q)$. 
	\end{lemma}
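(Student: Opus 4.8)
The plan is to show that the single transitive piece $W(\mathbb{F}_q)$ swallows the entire space in the limit, so that the largest orbit is $W(\mathbb{F}_q)$ itself and its proportion tends to one. Write $Y(\mathbb{F}_q) := W(\mathbb{F}_q) \cup \bigcup_{i=1}^m V_i(\mathbb{F}_q)$ for the space on which we compute the asymptotic ratio. The first and most important step is to upgrade the weak hypothesis $W(\mathbb{F}_q) \not\subseteq V_i(\mathbb{F}_q)$ into genuine disjointness. Since $G$ acts transitively on $W(\mathbb{F}_q)$, this set is a single $G$-orbit; and since each $V_i(\mathbb{F}_q)$ is $G$-invariant by assumption, the orbit $W(\mathbb{F}_q)$ must be either entirely contained in $V_i(\mathbb{F}_q)$ or disjoint from it. The assumption $W(\mathbb{F}_q) \not\subseteq V_i(\mathbb{F}_q)$ excludes the first alternative, forcing $W(\mathbb{F}_q) \cap V_i(\mathbb{F}_q) = \emptyset$. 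Combined with the pairwise disjointness of the $V_i(\mathbb{F}_q)$, this makes $Y(\mathbb{F}_q) = W(\mathbb{F}_q) \sqcup \bigsqcup_{i=1}^m V_i(\mathbb{F}_q)$ a genuine disjoint union, so that $|Y(\mathbb{F}_q)| = |W(\mathbb{F}_q)| + \sum_{i=1}^m |V_i(\mathbb{F}_q)|$.

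Next I would identify the maximal orbit. By invariance, every $G$-orbit contained in $Y(\mathbb{F}_q)$ is either equal to $W(\mathbb{F}_q)$ or lies inside some $V_i(\mathbb{F}_q)$, so its cardinality is at most $\max\{\,|W(\mathbb{F}_q)|,\ \max_i |V_i(\mathbb{F}_q)|\,\}$. The hypothesis that each $|V_i(\mathbb{F}_q)|$ is a monic polynomial of strictly smaller order of growth than $|W(\mathbb{F}_q)|$ guarantees that for all sufficiently large $q$ one has $|W(\mathbb{F}_q)| > |V_i(\mathbb{F}_q)|$ for every $i$. Hence for large $q$ the maximum is attained by the orbit $W(\mathbb{F}_q)$, giving $\max_{v \in Y(\mathbb{F}_q)} \lvert\mathrm{Orb}(v)\rvert = |W(\mathbb{F}_q)|$.

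Finally I would compute the asymptotic ratio directly. Using the disjoint decomposition,
$$\frac{\max_{v \in Y(\mathbb{F}_q)}\lvert\mathrm{Orb}(v)\rvert}{|Y(\mathbb{F}_q)|} = \frac{|W(\mathbb{F}_q)|}{|W(\mathbb{F}_q)| + \sum_{i=1}^m |V_i(\mathbb{F}_q)|} = \frac{1}{1 + \sum_{i=1}^m |V_i(\mathbb{F}_q)|/|W(\mathbb{F}_q)|}.$$
Each summand $|V_i(\mathbb{F}_q)|/|W(\mathbb{F}_q)|$ tends to $0$ as $q \to \infty$, since the numerator is a monic polynomial of strictly smaller degree than the monic denominator, so the whole expression tends to $1$, which is precisely asymptotic transitivity on $Y(\mathbb{F}_q)$. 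The only genuinely delicate point is the disjointness argument: one must deploy the transitivity on $W(\mathbb{F}_q)$ together with the invariance of the $V_i(\mathbb{F}_q)$ to convert non-containment into disjointness, after which the conclusion reduces to the routine comparison of polynomial growth rates carried out above.
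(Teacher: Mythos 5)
Your proposal is correct and follows essentially the same route as the paper: the same transitivity-plus-invariance argument converts non-containment into disjointness, giving $|Y(\mathbb{F}_q)|=|W(\mathbb{F}_q)|+\sum_{i=1}^m|V_i(\mathbb{F}_q)|$, and the limit is then a routine comparison of polynomial degrees. Your explicit identification of $W(\mathbb{F}_q)$ as the maximal orbit for large $q$ is a small tidiness improvement over the paper, which simply computes the ratio for an orbit of a point of $W(\mathbb{F}_q)$, but it is not a different argument.
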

	\begin{proof}
		By assumption, the action is stable on $Y(\mathbb{F}_q)$. Clearly, $W(\mathbb{F}_q) \cap V_i(\mathbb{F}_q) = \emptyset$ since for $x \in W(\mathbb{F}_q) \cap V_i(\mathbb{F}_q)$, $\mathrm{Orb}(x) = W(\mathbb{F}_q)$ and $\mathrm{Orb}(x) \subseteq V_i(\mathbb{F}_q)$. This is a contradiction since $W(\mathbb{F}_q) \not\subseteq V_i(\mathbb{F}_q)$. Therefore, $|Y(\mathbb{F}_q)|=|W(\mathbb{F}_q)|+\sum\limits_{i=1}^m|V_i(\mathbb{F}_q)|$. 
		
		Suppose the order of growth of points in $W(\mathbb{F}_q)$ is $s$ and that of $V_i(\mathbb{F}_q)$ is $t_i$. By assumption, $s>t_1,\ldots,t_m$.  
		Let $w \in W(\mathbb{F}_q) \subset  Y(\mathbb{F}_q)$. Then the asymptotic ratio of the orbit of $w$ is given by 
		$$ \lim_{q\to \infty} \frac{|\mathrm{Orb}(w)|}{|Y(\mathbb{F}_q)|}=\lim_{q\to \infty} \frac{|W(\mathbb{F}_q)|}{|Y(\mathbb{F}_q)|}= \lim_{q\to \infty} \frac{q^s+a_1q^{s-1}+\cdots+a_{s}}{q^s+q^{t_1}+\cdots+q^{t_m}+\text{lower degree terms}}= 1$$
		since $q^s$ is the leading term of both numerator and denominator. 
	\end{proof}

The following is a specific example that demonstrates this.
\begin{example} 
	Let $\Gamma$ be a finitely presented group and $G(\mathbb{F}_q)$ be any group obtained as finite field points of an algebraic group $G$ defined over $\mathbb{Z}$. For example, $\mathrm{SL}_n(\mathbb{F}_q)$ denotes the $\mathbb{F}_q$-points of $\mathrm{SL}_n$. Consider the precomposition action of $\mathrm{Aut}(\Gamma)$ on $\mathrm{Hom}(\Gamma,G)$ by
	$ \alpha \cdot \rho = \rho \circ \alpha^{-1}$. Let $\rho\in \mathrm{Hom}(\Gamma,G)$ be such that $|\mathrm{Orb}(\rho)|\geq q$ for all $q\geq 2$. Clearly, $|\mathrm{Orb}(I)|=1$ where $I$ denotes the identity homomorphism. By letting $W(\mathbb{F}_q)=\mathrm{Orb}(\rho)$ and $V(\mathbb{F}_q)=\mathrm{Orb}(I)$, the action is asymptotically transitive on $Y(\mathbb{F}_q)=W(\mathbb{F}_q)\cup V(\mathbb{F}_q)$. 
\end{example}
\begin{remark}
	In general, we can choose $V_i(\mathbb{F}_q)$ to be the orbit of a point such that the orbit size is bounded by a constant and $W(\mathbb{F}_q)$ to be an orbit whose size is given by a monic polynomial in $q$ with degree at least $1$. 
\end{remark}
The automorphism group, $\mathrm{Aut}(\Gamma)$ acts naturally on $\mathrm{Hom}(\Gamma,G)$ by precomposition: 
$$ \alpha \cdot \rho = \rho \circ \alpha^{-1} \text{ where }  \alpha \in \mathrm{Aut}(\Gamma)   \text{ and } \rho \in \mathrm{Hom}(\Gamma,G).$$  We consider the case when $\Gamma=\mathbb{Z}^r$ and $G=\mathrm{SL}_n(\mathbb{F}_q)$ for $n=2,3$. 
Recall that $\mathrm{Aut}(\mathbb{Z}^r)=\mathrm{GL}_r(\mathbb{Z})$. Let $\gamma \in \mathrm{Aut}(\mathbb{Z}^r)$ and $A = [a_{ij}]
\in \mathrm{GL}_r(\mathbb{Z})$ corresponds to the automorphism, $\gamma^{-1}$ of $\mathbb{Z}^r$. 
For $(x_1,\ldots,x_r) \in \mathbb{Z}^r$ and $\rho\in \mathrm{Hom}(\mathbb{Z}^r,\mathrm{SL}_n\mathbb{F}_q))$, let $\rho(x_i)=Y_i\in \mathrm{SL}_n(\mathbb{F}_q)$. Then the action of $\Gamma$ on $\rho$ is defined as follows: 
\begin{eqnarray}\label{Eq2.1}
	\gamma \cdot \rho & = & \rho(\gamma^{-1}(x_1,\ldots,x_r))  \nonumber\\
	& = & \rho\left(\begin{pmatrix}
		a_{11} & \cdots & a_{1r}\\
		\vdots & \ddots & \vdots\\
		a_{r1} & \cdots & a_{rr}
	\end{pmatrix} \cdot \begin{pmatrix}
		x_1\\
		x_2 \\
		\vdots\\
		x_r
	\end{pmatrix}\right) \nonumber 
	= \rho \left(\begin{pmatrix}
		a_{11}x_1 +\cdots + a_{1r}x_r\\
		\vdots \\
		a_{r1}x_1 +\cdots + a_{rr}x_r
	\end{pmatrix}\right) \nonumber\\
	& = &\begin{pmatrix}
		a_{11}\rho(x_1) +\cdots + a_{1r}\rho(x_r)\\
		\vdots \\
		a_{r1}\rho(x_1) +\cdots + a_{rr}\rho(x_r)
	\end{pmatrix}  \nonumber 
	 =  \begin{pmatrix}
		Y_1^{a_{11}} \cdots Y_r^{a_{1r}}\\
		\vdots \\
		Y_1^{a_{r1}}\cdots Y_r^{a_{rr}}
	\end{pmatrix}. \nonumber
\end{eqnarray}

The action of $\mathrm{Aut}(\Gamma)$  induces a  natural action of the outer automorphism group, $\mathrm{Out}(\Gamma)$, on the character variety, $\mathfrak{X}_{\Gamma}(G)$.  In this section, we analyze the asymptotic dynamics of this action. We establish an upper bound for the asymptotic ratio in these cases and demonstrate that the action is not asymptotically transitive on the character variety. This is in contrast to the Markoff case as shown by Bourgain, Gamburd, and  Sarnak in \cite{Sarnak2} and \cite{Sarnak3}.

\begin{lemma}[Subgroup Lemma]\label{SL}
	Let $\Gamma=\langle \gamma_1,...,\gamma_r \ | \ R_i, \ i= 1,\ldots, s \rangle$ be a finitely presented group and $G$ be a group acting on $\mathrm{Hom}(\Gamma,G)$ as defined above. If $H$ is a subgroup of $G$ and $\rho\in \mathrm{Hom}(\Gamma,G)$ such that $(\rho(\gamma_1),\ldots,\rho(\gamma_r)) 
	\in H^r$, then $\mathrm{Orb}(\rho) \subseteq \mathrm{Hom}(\Gamma,H)$. 
\end{lemma}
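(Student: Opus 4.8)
The plan is to exploit the fact that the action is by \emph{precomposition}, so it can never enlarge the image of a representation. First I would recall that since $\Gamma$ is generated by $\gamma_1,\ldots,\gamma_r$, the image $\rho(\Gamma)$ is precisely the subgroup of $G$ generated by $\rho(\gamma_1),\ldots,\rho(\gamma_r)$. By hypothesis each $\rho(\gamma_i)$ lies in $H$, and since $H$ is a subgroup of $G$ (hence closed under products and inverses), it follows immediately that $\rho(\Gamma)\subseteq H$.

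Next I would take an arbitrary element of $\mathrm{Orb}(\rho)$, which by the definition of the action has the form $\alpha\cdot\rho=\rho\circ\alpha^{-1}$ for some $\alpha\in\mathrm{Aut}(\Gamma)$. The key observation is that precomposition with an automorphism leaves the image unchanged: the image of $\rho\circ\alpha^{-1}$ is $\rho(\alpha^{-1}(\Gamma))$, and since $\alpha^{-1}$ is an automorphism of $\Gamma$ it is in particular surjective, so $\alpha^{-1}(\Gamma)=\Gamma$ and the image equals $\rho(\Gamma)$.

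Combining the two steps, every element $\alpha\cdot\rho$ of the orbit has image contained in $H$; equivalently it factors as a homomorphism $\Gamma\to H$, so $\alpha\cdot\rho\in\mathrm{Hom}(\Gamma,H)$. Since $\alpha$ was arbitrary, this gives $\mathrm{Orb}(\rho)\subseteq\mathrm{Hom}(\Gamma,H)$, as desired.

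There is no genuine obstacle here; the content is entirely conceptual rather than computational. The only points requiring care are bookkeeping ones: the presentation relations $R_i$ play no role beyond guaranteeing that $\mathrm{Hom}(\Gamma,-)$ is well defined, and it is the \emph{surjectivity} of $\alpha^{-1}$, not merely its being an endomorphism, that makes the image invariant. Indeed, the statement would fail for a general non-surjective endomorphism of $\Gamma$ in place of an automorphism, which highlights exactly where the argument uses the hypothesis that we act by $\mathrm{Aut}(\Gamma)$.
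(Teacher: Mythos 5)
Your proof is correct and is essentially the paper's argument: the paper writes $\alpha^{-1}(\gamma_i)$ explicitly as a word $\gamma_{i_1}^{a_{i_1}}\cdots\gamma_{i_s}^{a_{i_s}}$ and observes that its image under $\rho$ is a product of powers of elements of $H$, hence lies in $H$ by closure, while you package the same closure fact as $\rho(\Gamma)\subseteq H$ together with the observation that precomposition by an automorphism preserves the image. Your remark isolating the role of surjectivity of $\alpha^{-1}$ is a nice clarification, though only the containment $\operatorname{im}(\rho\circ\alpha^{-1})\subseteq\rho(\Gamma)$ is actually needed for the conclusion.
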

\begin{proof}
	For $\rho \in \mathrm{Hom}(\Gamma,G))$, let $(\rho(\gamma_1),\ldots,\rho(\gamma_r))=(H_1,\ldots,H_r) \in H^r$. Let $\alpha \in \mathrm{Aut}(\Gamma)$ and $\alpha^{-1}(\gamma_i)= w_i \in \Gamma$ where $w_i= \gamma_{i_1}^{a_{i_1}}\cdots\gamma_{i_s}^{a_{i_s}}$. 
	Using this expression of $w_i$, we compute the following:
	\begin{eqnarray*}
		\rho(w_i) & = & \rho(\gamma_{i_1}^{a_{i_1}}\cdots\gamma_{i_s}^{a_{i_s}})
		=  \rho(\gamma_{i_1}^{a_{i_1}})\cdots \rho(\gamma_{i_s}^{a_{i_s}})
		=  H_{i_1}^{a_{i_1}}\cdots H_{i_s}^{a_{i_s}}.
	\end{eqnarray*}
	Therefore, $ \alpha \cdot \rho = (\rho(\alpha^{-1}(\gamma_1, \ldots,\gamma_r)))= (\rho(w_1), \ldots, \rho(w_r))$ \\$=(H_{1_1}^{a_{1_1}}\cdots H_{1_s}^{a_{1_s}},\ldots ,H_{i_1}^{a_{i_1}}\cdots H_{i_s}^{a_{i_s}},\ldots, H_{r_1}^{a_{r_1}}\cdots H_{r_s}^{a_{r_s}}).$
	Since $H_{i_j} \in H$ and $H$ is a subgroup, $\prod\limits_{j=1}^s H_{i_j}^{a_{i_j}}$ is also in $H$. Consequently, $\alpha \cdot \rho$ can be identified with an element in $H^r$. 
\end{proof}
\begin{remark}\label{RmkSL}
	Let $\Gamma=\mathbb{Z}^r$, $G=\mathrm{SL}_n({\mathbb{F}}_q)$ where $n=2,3$, and $H$ any subgroup of $G$. Suppose $\rho \in \mathrm{Hom}(\Gamma,G)$ be such that $\rho$ corresponds to a tuple $(A_1,\ldots,A_r) \in \mathrm{SL}_n({\mathbb{F}}_q)$. Then by Lemma \ref{SL}, if $A_i \in H$ for $1 \leq i \leq r$, then $\mathrm{Orb}(\rho)$ can be identified with a subset of $ H^r$. 
\end{remark}

\begin{theorem}\label{ATSL2}
	The action of $\mathrm{Out}(\mathbb{Z}^r)$ on the $\mathbb{F}_q$ points of the $\mathrm{SL}_n$-character variety of $\mathbb{Z}^r$ is not asymptotically transitive for $n=2,3$. Furthermore, the asymptotic ratio of the orbits of elements in the character variety is bounded above by $\frac{1}{2}$. 
\end{theorem}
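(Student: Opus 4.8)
\emph{Plan of proof.} The strategy is to bound the size of every individual $\mathrm{Out}(\mathbb{Z}^r)$-orbit by the size of the stratum that contains it, and then to read off from the explicit stratum counts already computed that no stratum ever accounts for more than (asymptotically) half of the whole character variety. Fix a point $[\rho]\in\mathfrak{X}_{\mathbb{Z}^r}(\mathrm{SL}_n(\mathbb{F}_q))$ represented by a commuting, simultaneously diagonalizable tuple $(A_1,\ldots,A_r)$, and let $T\subseteq\mathrm{SL}_n(\mathbb{F}_q)$ be the (split, non-split, or mixed) maximal torus whose $\mathbb{F}_q$-points contain the abelian subgroup $\langle A_1,\ldots,A_r\rangle$. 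The first step is to apply the Subgroup Lemma \ref{SL} and Remark \ref{RmkSL} with $H=T(\mathbb{F}_q)$: since each $A_i\in T$ and $T$ is a subgroup of $\mathrm{SL}_n(\mathbb{F}_q)$, the entire orbit of $\rho$ is identified with a subset of $T^r$.

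The key step is to upgrade ``$\mathrm{Orb}\subseteq T^r$'' to ``$\mathrm{Orb}$ lies in a single stratum.'' For $\alpha\in\mathrm{Aut}(\mathbb{Z}^r)=\mathrm{GL}_r(\mathbb{Z})$ the tuple $\alpha\cdot\rho$ equals $(B_1,\ldots,B_r)$ with $B_i=\prod_j A_j^{a_{ij}}$ for an integer matrix $(a_{ij})$ invertible over $\mathbb{Z}$; hence $\langle B_1,\ldots,B_r\rangle=\langle A_1,\ldots,A_r\rangle$. Because the stabilizer of a tuple under conjugation is exactly the centralizer of the subgroup it generates (Lemma \ref{lem4.2.10}), the stabilizer type is unchanged, and because the $B_i$ are diagonalized over $\overline{\mathbb{F}}_q$ by the same matrix as the $A_i$ (their eigenvalues being products of powers of the eigenvalues of the $A_j$), the splitting field of the eigenvalues, and hence the torus type, is also unchanged. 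Thus $\alpha\cdot\rho$ lies in the same stratum as $\rho$, giving $|\mathrm{Orb}([\rho])|\leq|\mathscr{T}_X/G|$ for the stratum $X$ of $[\rho]$, and therefore $\max_{v}|\mathrm{Orb}(v)|\leq\max_X|\mathscr{T}_X/G|$. Passing through the stratum count rather than the cruder bound $|\mathrm{Orb}|\leq|T|^r$ is essential: it is what supplies the division by the order of the Weyl group, without which the ratio estimate would degrade to the useless bound $\leq 1$.

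It then remains to compare the largest stratum with the total count. For $n=2$, Proposition \ref{thmSL2} shows the dominant stratum is the irreducible (non-split torus) one, of size $\tfrac{(q+1)^r-2^r}{2}$, while the total is $\tfrac{(q+1)^r+(q-1)^r}{2}$ by Corollary \ref{corSL2E}; the ratio is $\tfrac{(q+1)^r}{(q+1)^r+(q-1)^r}\to\tfrac12$ as $q\to\infty$. For $n=3$, Theorem \ref{n3} shows the dominant stratum is the partially reducible one, of leading order $\tfrac{q^{2r}}{2}$, against a total of leading order $q^{2r}$ by Corollary \ref{En3} (the central and repeating strata are of order $3^r$ and $q^r$ respectively, and so are asymptotically negligible, while the reducible and irreducible strata contribute only $\tfrac16$ and $\tfrac13$ of the leading coefficient). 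In both cases $\limsup_{q\to\infty}\frac{\max_v|\mathrm{Orb}(v)|}{|X(\mathbb{F}_q)|}\leq\tfrac12$, which is the asserted upper bound and, being strictly below $1$, shows the action is not asymptotically transitive in the sense of Definition \ref{AT}.

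The hard part is the invariance-of-stratum step. One must resist describing the stratum by a condition on the \emph{generators} $A_i$, since the automorphism action can genuinely alter them: a generator with distinct eigenvalues may be carried to one with a repeated eigenvalue even though the generated subgroup, and hence the stabilizer, is untouched. The remedy is to characterize each stratum intrinsically, by the conjugacy class of the centralizer of $\langle A_1,\ldots,A_r\rangle$ together with the splitting field of that subgroup's elements; with this intrinsic description in hand, invariance under $\mathrm{GL}_r(\mathbb{Z})$ follows immediately from the identity $\langle B_i\rangle=\langle A_i\rangle$. I would take care to verify this intrinsic description matches the stratification of Section \ref{sec3} in each of the (finitely many) torus types, after which the numerical comparison above is routine.
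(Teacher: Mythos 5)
Your proposal is correct and follows essentially the same route as the paper: bound each orbit via the Subgroup Lemma by the count of classes of tuples lying in the relevant subgroup (split torus, non-split torus, mixed torus, center), then compare leading terms of the stratum counts from Proposition \ref{thmSL2} and Theorem \ref{n3} against the totals in Corollaries \ref{corSL2E} and \ref{En3}. Your extra ``invariance-of-stratum'' refinement (via the identity $\langle B_1,\ldots,B_r\rangle=\langle A_1,\ldots,A_r\rangle$) is sound but not strictly needed --- the paper settles for bounding an orbit by the classes of tuples in $H^r$, which may be a union of strata, and this cruder bound already yields the same asymptotic ratios $\frac16,\,0,\,\frac12,\,\frac13$ and hence the bound $\frac12$.
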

\begin{proof}
We begin by proving the statement for $n=3$. 
The strategy is to first identify a subgroup within each stratum and then use the counts from the previous section to establish an upper bound for the asymptotic ratio.
	 Recall that the diagonal tuples (up to simultaneous permutation) correspond to the polystable points, which are the elements of the character variety, $\mathfrak{X}_{\mathbb{Z}^r}(\mathrm{SL}_3(\mathbb{F}_q))$. 
	Let $(A_1,\ldots,A_r)\in \mathrm{SL}_3(\mathbb{F}_q)^r$ be a tuple that is simultaneously diagonalizable to a tuple $(D_1,\ldots,D_r)\in \mathrm{SL}_3(\overline{\mathbb{F}}_q)^r$, unique up to Weyl group action. We use the equivalence class $[(D_1,\ldots,D_r)]$ to represent the corresponding element in $\mathfrak{X}_{\mathbb{Z}^r}(\mathrm{SL}_3(\mathbb{F}_q))$. 
		Recall the definition of tuples and strata from Definition \ref{Strata3}. Below, we classify the diagonal tuples into four distinct types and identify a subgroup within each type. In this proof, we use $a$ to represent the $r$-th power of the number of roots of unity for each $q$, i.e., $a=3^r$ when $p\equiv 1 \bmod 3 $ or $  p\equiv -1 \bmod 3$ and  $ k $ is even, and $a=1^r$ otherwise. For each of the following orbit types, we identify a subgroup, $H$ such that $A_i$ is an element of $H$. By Lemma \ref{SL},  the size of $\mathrm{Orb}((A_1,A_2,\ldots,A_r))$ is bounded by $|H^r|$. Consequently, the maximal size of $\mathrm{Orb}([(A_1,A_2,\ldots,A_r)])$ is bounded above by the number of elements in each stratum (up to a constant). This allows us to compute a bound for the asymptotic ratio of each type. 
	\begin{enumerate}
		\item Basefield Subgroup Tuples \\ This includes the set of all tuples whose entries come from the set of diagonal elements in $\mathrm{SL}_3(\mathbb{F}_q)$ i.e.,  $(D_1,\ldots,D_r) \in \mathscr{D}(\mathrm{SL}_3(\mathbb{F}_q))^r$. The set, $\mathscr{D}(\mathrm{SL}_3(\mathbb{F}_q))$ is a subgroup of $\mathrm{SL}_3(\mathrm{F}_q)$. Note that this case represents the union of reducible, repeating, and central tuples, as defined in Definition \ref{Strata3}. Therefore, the total number of elements in $\mathfrak{X}_{\mathbb{Z}^r}(\mathrm{SL}_3(\mathbb{F}_q))$ of this type can be calculated as follows by summing the number of points in these three strata. Recall that $q=p^k$.  From Theorem  \ref{n3}, we obtain
		\begin{eqnarray*}
			a+(q-1)^r-a+\frac{(q-1)^{2r}}{6}-\frac{(q-1)^r}{2}+\frac{a}{3} & = &  \frac{(q-1)^{2r}}{6}+\frac{(q-1)^r}{2}+\frac{a}{3}.
		\end{eqnarray*} 
	Therefore, for any tuple, $(D_1,\ldots,D_r) \in \mathscr{D}(\mathrm{SL}_3(\mathbb{F}_q))^r$, the size of the orbit,  $|\mathrm{Orb}([(D_1,\ldots,D_r)])|$,  is less than the above count. 
		\item Central Tuples \\
		The central matrices, $Z(\mathrm{SL}_3(\mathbb{F}_q))$ form a subgroup of $\mathrm{SL}_3(\mathbb{F}_q)$. This subgroup is included in the previous type. Although distinguishing this type is not necessary for computational purposes, we make this distinction to highlight that the central tuples are fixed points under this action. The number of central tuples is $a$. Hence, the size of the orbit of any central tuple is less than $a$. However, note that the central tuples do not give rise to a maximal orbit. 
		\item Irreducible Subgroup Tuples \\
		Let $[(A_1,\ldots,A_r)]$ be in the irreducible stratum. By definition, $A_i$ has an irreducible characteristic polynomial for some $i$. Then, $A_j \in G_{A_i}\cap \mathrm{SL}_3(\mathbb{F}_q)$, the stabilizer subgroup of $A_i$ [see the proof of Lemma \ref{lem3.2.26}]. Since $G_{A_i}$ is a subgroup, by Lemma \ref{SL},  $\mathrm{Orb}([(A_1,\ldots,A_r)])$ is contained in the irreducible stratum. By Theorem \ref{n3}, number of elements in the irreducible stratum is  
		$$ \frac{\left(q^2+q+1\right)^{r}-a}{3},$$ 
		which bounds the size of any orbit in the irreducible stratum. 
		\item Partially Reducible Tuples \\ 
		 Let $(A_1,\ldots,A_r)$ be an element of the partially reducible stratum. Then $D_i$ is either central or has the property that $\mathrm{char}(D_i)$ has an an irreducible quadratic factor. As explained in the proof of Lemma \ref{PRS}, the set of such matrices can be identified with  $\mathscr{D}(\mathrm{GL}_2(\overline{\mathbb{F}}_q-\mathbb{F}_q)) \cup Z(\mathrm{GL}_2(\mathbb{F}_q))$. This set, along with the central elements, has a group structure as it is the stabilizer subgroup of the matrices in the partially reducible stratum as explained in the previous case. From Theorem $\ref{n3}$, we have the following count for the number of points in this stratum 
$$\frac{\left(q^2+q+1\right)^{r}-a}{3}$$
which provides an upper bound for the size of any orbit in this stratum. 
	\end{enumerate}
Now that we have counts for the maximal orbit size of each type, we can compute a bound for the asymptotic ratio using the count for the total number of points in  $\mathfrak{X}_{\mathbb{Z}^r}(\mathrm{SL}_3(\mathbb{F}_q))$ from Corollary \ref{3.2.29}:  $$\frac{\left(q-1\right)^{2r}}{6}+\frac{\left(q^2-1\right)^r}{2}+\frac{\left(q^2+q+1\right)^r}{3}.$$
	Note that the number of points in the variety is a monic polynomial of degree $2r$, i.e, the coefficient of $q^{2r}$ is one. 
	\begin{enumerate}
		\item Basefield Orbit \\
		$$	\lim_{q\to \infty}	\frac{\frac{\left(q-1\right)^{2r}}{6}+\frac{\left(q-1\right)^r}{2}+\frac{a}{3}}{\frac{\left(q-1\right)^{2r}}{6}+\frac{\left(q^2-1\right)^r}{2}+\frac{\left(q^2+q+1\right)^r}{3}} =\frac{1}{6} 
		$$
		The orbit grows at a rate of $q^{2r}$ with a coefficient of $\frac{1}{6}$. The limit of the ratio is determined by the coefficient of $q^{2r}$ in the numerator. 
		\item Central Orbit 
		$$
		\lim_{q\to \infty}	\frac{a}{\frac{\left(q-1\right)^{2r}}{6}+\frac{\left(q^2-1\right)^r}{2}+\frac{\left(q^2+q+1\right)^r}{3}} 
		=0$$
		If all elements in the tuple are central, the size of the orbit is bounded by a constant, and thus the asymptotic ratio is zero.
		\item Partially Reducible Orbit
	$$
			\lim_{q\to \infty}	\frac{\frac{(q^2-1)^{r}-(q-1)^r}{2}+a}{\frac{\left(q-1\right)^{2r}}{6}+\frac{\left(q^2-1\right)^r}{2}+\frac{\left(q^2+q+1\right)^r}{3}}=\frac{1}{2} 
	$$
	In this case, the coefficient of the leading term $q^{2r}$ is $\frac{1}{2}$. 
		\item  Irreducible Orbit
		$$
			\lim_{q\to \infty} \frac{\frac{\left(q^2+q+1\right)^{r}-a}{3}}{\frac{\left(q-1\right)^{2r}}{6}+\frac{\left(q^2-1\right)^r}{2}+\frac{\left(q^2+q+1\right)^r}{3}}= \frac{1}{3} 			
	$$
		\end{enumerate}
Based on the above calculations, it follows that the the asymptotic ratio of orbits under the action of $\mathrm{Out}(\mathbb{Z}^r)$ cannot exceed $\frac{1}{2}$. In particular, the ratio cannot equal one, so the action is not asymptotically transitive.
\\	For $n=2$, the proof follows in a similar manner with three orbit types.  Recall the definition of strata from Remark \ref{classify} and Definition \ref{n2stratum}. In this case, the strata and the orbit types coincide.  The subgroups in each stratum are identical to those in the case when $n=3$, which we outline below. Note that each orbit type includes the central tuples. In the following counts, let $b=2^r$ when$p$ is odd, and $b=1^r$ when $p=2$. 

	\begin{enumerate}
		\item Central Tuples \\ 
		The set of central elements, $Z(\mathrm{SL}_2(\mathbb{F}_q))$, is a subgroup of $\mathrm{SL}_2(\mathbb{F}_q)$. The number of elements in this stratum is $b$. 
		\item Reducible Tuples \\
		The set of diagonal elements, $\mathscr{D}(\mathrm{SL}_2(\mathbb{F}_q))$,  is a subgroup of $\mathrm{SL}_2(\mathbb{F}_q)$. The number of tuples in this stratum, along with the central tuples, can be computed from Proposition  \ref{thmSL2} as follows: 
		$$ \frac{(q-1)^r-b}{2}+b =\frac{(q-1)^r+b}{2}$$ 
		which provides an upper bound for the size of the orbit of any element in this stratum. 
		\item Irreducible: The set of irreducible diagonal tuples, $\mathscr{D}_{irr}(\mathrm{SL}_2(\mathbb{F}_q))$ along with the central elements is the stabilizer subgroup, $G_A$, of a diagonal matrix $A$ such that $\mathrm{char}(A)$ is irreducible (as explained in the case when $n=3$). 
	The size of any orbit in this stratum is bounded above by the sum of the points in the stratum and the central tuples which can be counted as follows from \ref{thmSL2}: 
		$$	\frac{(q+1)^r-b}{2}+b=\frac{(q+1)^r+b}{2}.$$ 
\end{enumerate}	
We compute an upper bound for the asymptotic ratios of orbits in each  stratum using the counts from Proposition \ref{thmSL2}. From Corollary \ref{corSL2E}, we obtain the number of points in the character variety, $\mathfrak{X}_{\mathbb{Z}^r}(\mathrm{SL}_2(\mathbb{F}_q))$ as $$\frac{(q+1)^r+(q-1)^r}{2}.$$ 
Note that the leading coefficient here is $q^r$. 
 
\begin{enumerate} 
	\item Central Orbit:   $$
		\lim_{q\to \infty}	\frac{b}{ \frac{(q+1)^r}{2}+\frac{(q-1)^r}{2}}  
		=0$$
	\item Reducible Orbit: 
	$$
		\lim_{q\to \infty}	\frac{\frac{(q-1)^r+b}{2}}{\frac{(q+1)^r}{2}+\frac{(q-1)^r}{2}} = 
		\frac{1}{2}$$
	\item Irreducible Orbit: 
	$$ \lim_{q\to \infty}	\frac{\frac{(q+1)^r+b}{2}}{ \frac{(q+1)^r}{2}+\frac{(q-1)^r}{2}}
		= \frac{1}{2}.$$
\end{enumerate} 
Therefore, the asymptotic ratio of the $\mathrm{Out}(\mathbb{Z}^r)$-action on  $\mathfrak{X}_{\mathbb{Z}^r}(\mathrm{SL}_2(\mathbb{F}_q))$ is bounded above by $\frac{1}{2}$, implying that the action is not asymptotically transitive. 
\end{proof}

An interesting question to ask is whether this holds for general $\mathrm{SL}_n$-character varieties of $\mathbb{Z}^r$ for $n\geq 4$. We believe the results on asymptotic transitivity extends to $\mathrm{SL}_n$-character varieties for $n\geq 4$. It will also be interesting to explore asymptotic transitivity of $\mathrm{SL}_n$-character varieties of free groups. 
The observations from Theorem \ref{ATSL2} give us reason to believe that $\mathrm{SL}_n$-character varieties of free abelian groups or even free groups are not good candidates for asymptotic transitivity. However, there exist certain relative character varieties for which the action is asymptotically transitive. Sarnak and collaborators showed that the affine varieties arising from certain relative character varieties are asymptotically transitive, see \cite{Sarnak2} and \cite{Sarnak3}. Moreover, in the Mason Experimental Geometry Lab (MEGL) \cite{MEGL2023}, a group of students identified a non-trivial action on a linear projective space that is not transitive but shows asymptotic transitivity. 

One potential application arises from the asymptotic ratio which helps  in estimating the $E$-polynomial.  Recall that the asymptotic ratio is defined as the limit of the ratio of the size of the largest orbit to that of the number of points in the variety as $q$ approaches infinity.  Since the $E$-polynomial gives the number of points in a variety, if $X$ is a variety that has polynomial count equipped with a fixed asymptotic ratio for a defined group action, then we can estimate the $E$-polynomial using the size of the largest orbit. Moreover, if the $E$-polynomial is known, then the asymptotic ratio allows us to estimate the size of the maximal orbit. For instance, in the $\mathrm{SL}_3$-character variety of $\mathbb{Z}^r$ under the action of $\mathrm{Out}(\mathbb{Z}^r)$, the size of the maximal orbit cannot exceed half of the size of the variety for large $q$. Recall from Section \ref{Epolynomial} that letting $q=1$ in the $E$-polynomial gives us the Euler characteristic. Consequently, an estimate of the $E$-polynomial offers insight into approximating the Euler characteristic of the variety.


\end{document}